\newcommand\red[1]{\textcolor{red}{#1}}
\newcommand\ChangeRT[1]{\noalign{\hrule height #1}}
\numberwithin{equation}{section}
\newcounter{smallarabics}
\newcounter{smallroman}
\newenvironment{romanenumerate}
{\begin{list}{{\normalfont\textrm{(\roman{smallroman})}}}
  {\usecounter{smallroman}\setlength{\itemindent}{0cm}
   \setlength{\leftmargin}{5ex}\setlength{\labelwidth}{4ex}
   \setlength{\topsep}{0.75\parsep}\setlength{\partopsep}{0ex}
   \setlength{\itemsep}{0ex}}}
{\end{list}}
\newcommand{\ben}{\begin{romanenumerate}}  
\newcommand{\een}{\end{romanenumerate}}
\newtheorem{theoreme}{theorem }[section]
\newtheorem{theorem}[theoreme]{Theorem}
\newtheorem{proposition}[theoreme]{Proposition}
\newtheorem{Lemma}[theoreme]{Lemma}
\newtheorem{corollary}[theoreme]{Corollary}
\newtheorem{remark}{Remark}[section]
\newtheorem{example}[theoreme]{Example}
\newcommand{\pp}{\frac{\partial \tilde{\varphi}}{\partial \overline{z}}(z)}
\newcommand{\JapA}{\Big \langle \frac{A}{L} \Big \rangle }
\newcommand{\const}{\frac{\i}{2\pi}}
\newcommand{\constL}{\frac{\i}{2\pi L}}
\newcommand{\dz}{dz\wedge d\overline{z}}
\newcommand{\I}{\mathcal{I}}
\newcommand{\CC}{\mathcal{C}}
\newcommand{\Cu}{\mathcal{C}^{1,\text{u}}(A)}
\newcommand{\Coneone}{\mathcal{C}^{1,1}(A)}
\newcommand{\Ctwo}{\mathcal{C}^{2}(A)}
\newcommand{\B}{\mathcal{B}}
\newcommand{\G}{\mathcal{G}}
\newcommand{\Pp}{P_{\rm{c}}}
\newcommand{\RH}{R}
\newcommand{\RHz}{R_0}
\newcommand{\tH}{\theta(R)}
\newcommand{\eH}{\eta(R)}
\newcommand{\chiH}{\chi(R)}
\newcommand{\eHz}{\eta(R_0)}
\newcommand\nn\nonumber
\renewcommand\leq\varleq
\renewcommand\geq\vargeq
\renewcommand{\proof}{\noindent \emph{Proof. }}
 \newcommand{\R}{\mathbb{R}}
 \newcommand{\N}{\mathbb{N}}
\newcommand{\Z}{\mathbb{Z}} \newcommand{\C}{\mathbb{C}}
\newcommand{\D}{\mathcal{D}} \newcommand{\F}{\mathcal{E}}
 \renewcommand{\H}{\mathcal{H}}
\renewcommand{\S}{\mathcal{S}}
\newcommand{\grad}{\nabla}
\newcommand{\K}{\mathcal K}
\renewcommand{\i}{\mathrm{i}}
\renewcommand{\F} {\mathcal{F}}
\renewcommand{\epsilon}{\varepsilon}
\definecolor{myblue}{RGB}{0,29,119}
\DeclareMathOperator*{\slim}{s-lim}
\newenvironment{axioms}
{\enumerate[label= $\bullet$ A\arabic*, ref=A\arabic*]}
 {\endenumerate}
\newcommand\varitem[1]{\item[$\bullet$ A\arabic{enumi}\rlap{$#1$}]%
\edef\@currentlabel{A\arabic{enumi}{$#1$}}}
      \def\@setcopyright{}
      \def\serieslogo@{}
\begin{document}

\author{Gol\'enia, Sylvain and Mandich, Marc-Adrien}
   \address{Sylvain Gol\'enia, Institut de Math\'ematiques de Bordeaux, 351 cours de la Lib\'{e}ration, F 33405 Talence, France}
   \email{sylvain.golenia@math.u-bordeaux.fr}
   \address{Marc-Adrien Mandich, Institut de Math\'ematiques de Bordeaux, 351 cours de la Lib\'{e}ration, F 33405 Talence, France}
   \email{marc-adrien.mandich@u-bordeaux.fr}


   \title[Propagation estimates]{Propagation estimates for one commutator regularity}

   \begin{abstract} 
In the abstract framework of Mourre theory, the propagation of  states is understood in terms of a conjugate operator $A$. A  powerful estimate has long been known for Hamiltonians having a good regularity with respect to $A$ thanks to the limiting absorption principle (LAP). We study the case where $H$ has less regularity with respect to $A$, specifically in a situation where the LAP and the absence of singularly continuous spectrum have not yet been established. We show that in this case the spectral measure of $H$ is a Rajchman measure and we derive some propagation estimates. One estimate is an application of minimal escape velocities, while the other estimate relies on an improved version of the RAGE formula. Based on several examples, including continuous and discrete Schr\"odinger operators, it appears that the latter propagation estimate is a new result for multi-dimensional Hamiltonians.
       \end{abstract}

%
\subjclass[2010]{81Q10, 47B25, 47A10, 35Q40, 39A70}

   \keywords{Propagation estimate, Mourre theory, Mourre estimate, Commutator, RAGE Theorem, Schr\"odinger operators, Rajchman measure}


   \maketitle 
\hypersetup{linkbordercolor=black}
\hypersetup{linkcolor=blue}
\hypersetup{citecolor=blue}
\hypersetup{urlcolor=blue}
\tableofcontents
 
\section{Introduction}

In quantum mechanics one is often interested in knowing the long-time behavior of a given state of a system. It is well-known that there exist states that to tend to remain localized in a region of space, called \textit{bound} states, while there are states that tend to drift away from all bounded regions of space, called \textit{scattering} states. The present article is concerned with the study of the latter. In particular, a propagation estimate is derived and serves to rigorously describe the long-time propagation, or behavior of these states. A classical way of obtaining a propagation estimate is by means of some resolvent estimates, or a Limiting Absorption Principle (LAP). The LAP is a powerful weighted estimate of the resolvent of an operator which implies a propagation estimate for scattering states as well as the absence of singular continuous spectrum for the system. 

The theory of Mourre was introduced by E. Mourre in \cite{m} and aims at showing a LAP. Among others, we refer to \cite{cgh,FH,GGM,HS1,jmp,S,G,GJ1} and to the book \cite{ABG} for the development of the theory. In a nutshell, Mourre theory studies the properties of a self-adjoint operator $H$, the Hamiltonian of the system, with the help of another self-adjoint operator $A$, referred to as a \textit{conjugate operator} to $H$. 
The standard Mourre theory relies on three hypotheses on the commutator of $H$ and $A$ which are, loosely speaking, that
\begin{enumerate}
\item[(M1)]  $[H, \i A]$ be positive, 
\item[(M2)]  $[H, \i A]$ be $H$-bounded, 
\item[(M3)]  $[[H,\i A], \i A]$ be $H$-bounded. 
\end{enumerate}
The main theory goes as follows:
\begin{align*}
&\underbrace{\mbox{(M1)}+\mbox{(M2)}} + \hspace{0.08cm} \mbox{(M3)} \Longrightarrow  \mbox{Resolvent estimates (LAP)} \hspace{-0.3cm}&& \Longrightarrow \mbox{Propagation estimates} \\
& \hspace{1cm} \big \Downarrow && \Longrightarrow \mbox{No singular continuous spectrum.} \\
&  \hspace{-0.18cm} \mbox{Absence of eigenvalues.} &&
\end{align*}
\noindent The purpose of the paper is to show that $\mbox{(M1)}+\mbox{(M2')} \Longrightarrow$ Weaker propagation estimates, where (M2') is slightly stronger than (M2).

We  set up notation and basic notions. For arbitrary Hilbert spaces $\F$ and $\G$, denote the bounded operators from $\F$ to $\G$ by $\B(\F,\G)$ and the compact operators from $\F$ to $\G$ by $\K(\F,\G)$. When $\F = \G$, we shall abbreviate $\B(\G) := \B(\G,\G)$ and $\K(\G) := \K(\G,\G)$. When $\G \subset \H$, denote $\mathcal{G}^*$ the antidual of $\mathcal{G}$, when we identify $\mathcal{H}$ to its antidual $\mathcal{H}^*$ by the Riesz isomorphism Theorem. Fix self-adjoint operators $H$ and $A$ on a separable complex Hilbert space $\mathcal{H}$, with domains $\mathcal{D}(H)$ and $\mathcal{D}(A)$ respectively. In Mourre theory, regularity classes are defined and serve to describe the level of regularity that $A$ has with respect to $H$. The most important of these classes are defined in Section \ref{RegularityClasses}, but we mention that they are typically distinct in applications and always satisfy the following inclusions
\begin{equation}
\label{Chain}
\CC^2(A) \subset \CC^{1,1}(A) \subset \CC^{1,\text{u}}(A) \subset \CC^{1}(A).
\end{equation}
Of these, $\CC^1(A)$ is the class with the least regularity, whereas $\CC^2(A)$ is the class with the strongest regularity. Indeed if $H \in \CC^1(A)$, then the commutator $[H,\i A]$ extends to an operator in $\mathcal{B}(\mathcal{D}(H),\mathcal{D}(H)^*)$ and is denoted $[H,\i A]_{\circ}$; whereas if $H \in \CC^2(A)$, then in addition the iterated commutator $[[H,\i A],\i A]$ extends to an operator in $\mathcal{B}(\mathcal{D}(H),\mathcal{D}(H)^*)$ and is denoted by $[[H,\i A]_{\circ},\i A]_{\circ}$ (see Section \ref{RegularityClasses}). As the $\CC^{1,\text{u}}(A)$ class plays a key role in this article we recall here its definition. We say that $H$ belongs to the $\CC^{1,\text{u}}(A)$ class if the map $t \mapsto e^{-\i t A}(H+\i)^{-1}e^{\i tA}$ is of class $\CC^1(\R; \B(\H))$, with $\B(\H)$ endowed with the norm operator topology. The standard example of operators belonging to the aforementioned classes is the following. 

\begin{example} [Continuous Schr\"odinger operators]
\label{ex:1}
Let $H_0$ be the self-adjoint realization of the Laplace operator 
$-\Delta$ in $L^2(\R^d)$. Let $Q$ be the operator of multiplication by $x=(x_1,...,x_d) \in \R^d$, and let $P:= -\i \grad$. Set
\[H:=H_0+V_{\rm sr}(Q)+V_{\rm lr}(Q),\]
where $V_{\rm sr}(x)$ and $V_{\rm   lr}(x)$ are real-valued functions that belong to $L^{\infty}(\R^d)$. Thus $V_{\rm sr}(Q)$ and $V_{\rm lr}(Q)$ are bounded self-adjoint operators on $L^2(\R^d)$ and they are respectively the short- and long-range perturbations. Suppose that $\lim V_{\rm   sr}(x) = \lim V_{\rm   lr}(x) = 0$ as $\|x\| \to +\infty$. Then $V_{\rm   sr}(Q)$ and $V_{\rm   lr}(Q)$ are $H_0$-form relatively compact operators. This notably implies that $\sigma_{\rm ess}(H)=[0,+\infty)$ by the Theorem of Weyl on relative compactness. Let $A:= (Q\cdot P + P \cdot Q)/2$ be the so-called generator of dilations. It is the standard conjugate operator to $H$. For the long-range perturbation, further assume that $x\cdot \nabla V_{\rm lr}(x)$ is a well-defined function. Table \ref{tabl1} displays Hamiltonians belonging to the classes introduced in \eqref{Chain}. The idea is clear: stronger decaying bounds on the potential imply stronger regularity. We study this example in Section \ref{Section:Examples} and prove the information reported in Table \ref{tabl1}. Finally, we should point out that many studies of (microlocal) resolvent estimates specifically for Schr\"odinger operators have been done previously; we refer to \cite{IK} and \cite{W} and references therein. 
\begin{table}[ht]
\centering
\begin{tabular}{!{\vrule width 1.0pt}c|c!{\vrule width 1.0pt}}
\ChangeRT{1.0pt}
In addition, if $\langle x\rangle V_{\rm sr}(x)$ and $x \cdot \nabla V_{\rm lr}(x)$ are & Then $H$ belongs to \\
\ChangeRT{1.0pt}
$L^{\infty}(\R^d)$ & $\CC^1(A)$ \\
$L^{\infty}(\R^d)$ and $o(1)$ & $\CC^{1,\rm{u}}(A)$ \\
$L^{\infty}(\R^d)$ and $o(\langle x\rangle^{-\epsilon})$, for some $\epsilon >0$ & $\CC^{1,1}(A)$ \\
$L^{\infty}(\R^d)$ and $O(\langle x\rangle^{-1})$  & $\CC^{2}(A)$ \\
\ChangeRT{1.0pt}
\end{tabular}
\caption{Regularity of $H$ w.r.t.\ a bound on the decay of the potential at infinity} 
\label{tabl1}
\end{table}
\end{example}

Let $E_{\I}(H)$ be the spectral projector of $H$ on a bounded interval $\I \subset \R$. Assuming $H \in \CC^1(A)$, we say that the \textit{Mourre estimate} holds for $H$ on $\mathcal{I}$ if there is $c>0$ and $K \in \K(\H)$ such that 
\begin{equation}
\label{Mestimate}
E_{\I}(H)[H,\i A]_{\circ} E_{\I}(H) \geqslant c E_{\I}(H) + K,
\end{equation}
in the form sense on $\mathcal{H} \times \mathcal{H}$. The Mourre estimate \eqref{Mestimate} is the precise formulation of the positivity assumption (M1) alluded to at the very beginning. The Mourre estimate is localized in energy, hence it allows to infer information about the system at specific energies. Let $\mu^{A}(H)$ be the set of points where a Mourre estimate holds for $H$, i.e.\
\begin{equation*}
\mu^A(H) := \{\lambda \in \R : \exists c>0, K \in \K(\H) \ \text{and} \ \mathcal{I} \ \text{open for which} \ \eqref{Mestimate} \ \text{holds for} \ H \ \text{on} \ \mathcal{I} \ \text{and} \ \lambda \in \I \}, 
\end{equation*}

In \cite{M}, Mourre assumes roughly $H \in \CC^2(A)$ and the estimate \eqref{Mestimate} with $K=0$ to prove the following LAP on any compact sub-interval $\mathcal{J} \subset \I$:
\begin{equation}
\label{Lapsup}
\sup \limits_{x \in \mathcal{J}, \ y > 0} \| \langle A \rangle ^{-s} (H-x-\i y)^{-1} \langle A \rangle ^{-s} \| < +\infty,
\end{equation}
for all $s > 1/2$. Here $\langle A \rangle := \sqrt{1+A^2}$. We remark that if the Mourre estimate holds on $\I$ with $K=0$, then $\I$ is void of eigenvalues, as a result of the Virial Theorem \cite[Proposition 7.2.10]{ABG}. Estimate \eqref{Lapsup} can be shown to yield the following Kato-type propagation estimate:
\begin{equation}
\label{Propagation}
\sup  \limits_{\substack{\psi \in \H \\ \| \psi \| \leqslant 1 }} \int _{-\infty} ^{\infty} \|\langle A \rangle ^{-s} e^{- \i tH} E_{\mathcal{J}} (H) \psi \|^2 dt < +\infty, 
\end{equation}
which in turn implies  the absence of singular continuous spectrum on $\mathcal{J}$, e.g.\ \cite[Section XIII.7]{RS4}. The main improvement of this result is done in \cite{ABG}. The same LAP is derived assuming only $H \in \CC^{1,1}(A)$ and the estimate \eqref{Mestimate}. It is further shown that this class is optimal in the general abstract framework. Precisely in \cite[Appendix 7.B]{ABG}, there is an example of $H\in\CC^{1, \text{u}}(A)$ for which no LAP holds. However, other types of propagation estimates were subsequently derived for $H\in\CC^{1, \text{u}}(A)$, see \cite{HSS, Ri} for instance. One major motivation for wanting to obtain dynamical estimates for this class was (and still is) to have a better understanding of the nature of the continuous spectrum of $H$. The aim of this article is to provide new propagation estimates for this class of operators. We also provide a simple criterion to check if an operator belongs to the $\CC^{1, \text{u}}(A)$ class.  

Let $P_{\rm{c}}(H)$ and $P_{\rm{ac}}(H)$ respectively denote the spectral projectors onto the continuous and absolutely continuous subspaces of $H$. Our first result is the following:

\begin{theorem} 
\label{Main2} 
Let $H$ and $A$ be self-adjoint operators in a separable Hilbert space $\H$ with $H \in \CC^{1,\rm{u}}(A)$. Assume that $\I \subset \R$ is a compact interval for which $\lambda \in \mu^A(H)$ for all $\lambda \in \I$. Suppose moreover that $\ker(H-\lambda) \subset \D(A)$ for all $\lambda \in \I$. Then for all $\psi \in \H$ and all $s>0$,
\begin{equation}
\label{NewFormula3}
\lim \limits_{t \to + \infty} \| \langle A \rangle ^{-s} e^{-\i tH} P_{\rm{c}} (H) E_\I (H)  \psi \| =0.
\end{equation}
Moreover, if $W$ is $H$-relatively compact, then
\begin{equation}
\label{NewFormula4}
\lim \limits_{t \to + \infty} \| W e^{-\i tH} P_{\rm{c}} (H) E_\I (H)   \psi \| =0.
\end{equation}
In particular, if $H$ has no eigenvalues in $\I$ and $\psi \in \H$, then the spectral measure \\
$\Omega \mapsto \langle \psi, E_{\Omega \cap \I}(H) \psi \rangle$ is a Rajchman measure, i.e., its Fourier transform tends to zero at infinity.
\end{theorem}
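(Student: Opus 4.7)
\emph{Overall plan.} I would prove \eqref{NewFormula3} first, then derive \eqref{NewFormula4} and the Rajchman property from it. By density of $\D(H)$ in $\H$, it suffices to prove \eqref{NewFormula3} for $\psi\in\D(H)\cap \Pp(H) E_\I(H)\H$. Compactness of $\I$, openness of $\mu^A(H)$, and a smooth partition of unity in $H$ let me assume $\I$ lies inside a single Mourre interval, giving $E_\I(H)[H,\i A]_\circ E_\I(H)\geq c\,E_\I(H)+K$ for some $c>0$ and compact $K$. By the Virial theorem (valid since $H\in\Cu\subset\CC^1(A)$) and the hypothesis $\ker(H-\lambda)\subset\D(A)$ for $\lambda\in\I$, the eigenvalues of $H$ in $\I$ are isolated with finite multiplicity and compact eigenprojectors; subtracting them and absorbing them into the compact error, after possibly shrinking $\I$ the estimate sharpens on the continuous subspace to
\[
E_\I(H)\Pp(H)[H,\i A]_\circ \Pp(H)E_\I(H)\;\geq\; c'\,\Pp(H)E_\I(H).
\]

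\emph{Improved RAGE.} Setting $\phi_t := e^{-\i tH}\psi$ and, for $f\in C_0(\R)$, $u_f(t) := \|f(A)\phi_t\|^2$, a short calculation using $\psi\in\D(H)$ shows $u_f$ is Lipschitz in $t$ with constant at most $2\|f\|_\infty^2\|\psi\|\|H\psi\|$. My plan is to approximate $A$ by a bounded self-adjoint regularization $A_n := A(1+A^2/n^2)^{-1}$, compute $\tfrac{d}{dt}\langle\phi_t,A_n\phi_t\rangle$ legally (since $A_n$ is bounded and $\psi\in\D(H)$), and pass to the limit in $n$ by exploiting the uniform norm-continuity built into $\Cu$. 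Combined with the strict Mourre estimate above, this should produce a time-averaged decay of $u_f(t)$, which the Lipschitz regularity then upgrades to pointwise convergence $u_f(t)\to 0$; taking $f(a)=\langle a\rangle^{-s}$ gives \eqref{NewFormula3}. The hardest part is precisely this last step: since $\Cu$ provides no sense for $[[H,\i A],\i A]$, the standard Heisenberg-picture argument and the LAP are both unavailable, and promoting a Cesaro-type average to a pointwise-in-$t$ limit has to rely only on the uniform norm-continuity of $t\mapsto e^{-\i tA}(H+\i)^{-1}e^{\i tA}$.

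\emph{From \eqref{NewFormula3} to \eqref{NewFormula4} and Rajchman.} For $H$-relatively compact $W$, the operator $K := W(H+\i)^{-1}$ is compact and
\[
W e^{-\i tH}\Pp(H) E_\I(H)\psi \;=\; K e^{-\i tH}\tilde\psi, \qquad \tilde\psi := (H+\i)\Pp(H) E_\I(H)\psi,
\]
with $\tilde\psi\in\Pp(H) E_\I(H)\H$ (after density reduction to $\psi\in\D(H)$). Setting $E_n := \1_{[-n,n]}(A)$, compactness of $K$ and $E_n\to I$ strongly give $KE_n\to K$ in norm, so
\[
\|Ke^{-\i tH}\tilde\psi\|\;\leq\; \|K-KE_n\|\,\|\tilde\psi\|\;+\;\|K\|\langle n\rangle^s\,\|\langle A\rangle^{-s}e^{-\i tH}\tilde\psi\|;
\]
picking $n$ large and then applying \eqref{NewFormula3} drives the right-hand side to $0$, proving \eqref{NewFormula4}. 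The Rajchman property then follows by taking $W\phi:=\langle\psi,\phi\rangle\psi$ (rank one, hence compact): the no-eigenvalue hypothesis gives $\Pp(H) E_\I(H)=E_\I(H)$, so $|\langle\psi,e^{-\i tH} E_\I(H)\psi\rangle|=\|\psi\|^{-1}\,\|W e^{-\i tH} E_\I(H)\psi\|\to 0$.
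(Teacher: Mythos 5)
Your derivation of \eqref{NewFormula4} from \eqref{NewFormula3} (via $E_n:=\1_{[-n,n]}(A)$ and norm-convergence $KE_n\to K$ for compact $K$) and the Rajchman corollary are both correct, and the spectral-projector approximation of $W$ is a perfectly good variant of what the paper does (the paper instead expands $W$ against an orthonormal set in $\D(A)$, which is what uses separability). The reduction of $\I$ to a single Mourre interval and the removal of the finitely many eigenvalues also match the paper's Steps 2--3 in spirit, though to make this rigorous you must check that $H':=HP_{\{\lambda\}}(H)^{\perp}$ stays in $\CC^{1,\rm u}(A)$; the paper does this via the lemma that rank-one projectors with range and kernel in $\D(A)$ are $\CC^{1,\rm u}(A)$, which is exactly where $\ker(H-\lambda)\subset\D(A)$ is used.

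The gap is in the part you flagged yourself: proving the pointwise limit \eqref{NewFormula3}. Your plan produces, at best, a Cesaro-averaged decay $\tfrac1T\int_0^T u_f(t)\,dt\to 0$, and you propose to upgrade this to $u_f(t)\to 0$ using only the Lipschitz bound on $u_f$. That implication is false. A nonnegative Lipschitz function can have time average going to $0$ while not vanishing at infinity: for instance the triangular bumps $u(t)=\max\bigl(0,1-|t-n^2|\bigr)$, summed over $n\in\N$, form a $1$-Lipschitz nonnegative function with $\tfrac1T\int_0^Tu\to0$ but $u(n^2)=1$ for all $n$. To promote an average statement to a pointwise one you would need $L^1(\R)$-in-time integrability (as the paper observes for the Kato-type estimate \eqref{Propagation}: $L^1$ plus a bounded derivative gives decay at infinity), and that $L^1$ estimate is precisely the LAP consequence that is unavailable when $H$ is only $\CC^{1,\rm u}(A)$. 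The paper avoids this entirely: once the strict Mourre estimate holds for $H'$ on $J'$, it invokes Richard's minimal-escape-velocity theorem, which gives the direct operator-norm decay $\|E_{(-\infty,a+vt]}(A)\,e^{-\i tH'}\eta(H')\,E_{[a,+\infty)}(A)\|\to0$ as $t\to+\infty$ for $v<c$; splitting $\psi$ on $E_{(-\infty,a)}(A)$ and $E_{[a,+\infty)}(A)$ and using $\|\langle A\rangle^{-s}E_{(a+vt,+\infty)}(A)\|\to0$ then yields the pointwise limit directly, with no Cesaro average or $L^1$-in-time bound ever entering. Without that (or an equivalent input), your "Improved RAGE" route does not close.
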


\begin{remark}
The last part of the Theorem follows by taking $W = \langle \psi, \cdot \rangle \psi$. If $H$ has no eigenvalues in $\I$, then $P_{\rm c} (H) E_{\I}(H) = E_{\I}(H)$ and so by the Spectral Theorem, 
\[ W e^{-\i tH} P_{\rm{c}} (H) E_\I (H)   \psi  =    \psi  \times \langle \psi, e^{-\i tH} E_\I (H)   \psi \rangle  =  \psi \times \int _{\R} e^{-\i tx} d \mu _{(\psi, E_\I (H) \psi)} (x).\]
The spectral measure $\mu$ satisfies $\Omega \mapsto \mu _{(\psi, E_\I (H) \psi)}(\Omega) = \langle \psi, E_{\Omega}(H) E_\I (H) \psi \rangle = \langle \psi, E_{\Omega \cap \I}(H) \psi \rangle$. 
\end{remark}
\begin{remark}
The separability condition on the Hilbert space is used for the proof of \eqref{NewFormula4}, because the compact operator $W$ is approximated in norm by finite rank operators.
\end{remark}
\begin{remark}
Perhaps a few words about the condition $\ker(H-\lambda) \subset \D(A)$. In general, it is satisfied if $H$ has a high regularity with respect to $A$, see \cite{FMS}. Although in the present framework it is not granted, it can be valid even if $H \in \CC^1(A)$ only, as seen in \cite{JM}.
\end{remark}

This result is new to us. However, it is not strong enough to imply the absence of singular continuous spectrum for $H$. Indeed, there exist Rajchman measures whose support is a set of Hausdorff dimension zero, see \cite{B}. We refer to \cite{L} for a review of Rajchman measures. The proof of this result is an application of the minimal escape velocities obtained in \cite{Ri}. The latter is a continuation of \cite{HSS}. We refer to those articles for historical references. 

We have several comments to do concerning the various propagation estimates listed above. First, it appears in practice that $\langle A \rangle ^{-s} E_{\I}(H)$ is not always a compact operator, and so \eqref{NewFormula3} is not a particular case of \eqref{NewFormula4}. The compactness issue of $\langle A \rangle ^{-s} E_{\I}(H)$ is discussed in Section \ref{Section:Compactness}, where we study several examples including continuous and discrete Schr\"odinger operators. In all of these examples, it appears that $\langle A \rangle ^{-s} E_{\I}(H)$ is compact in dimension one, but not in higher dimensions. Second, note that \eqref{Propagation} implies \eqref{NewFormula3}. Indeed, the integrand of \eqref{Propagation} is a $L^1(\R)$ function with bounded derivative (and hence uniformly continuous on $\R$). Such functions must go to zero at infinity. On the other hand, it is an open question to know if \eqref{Propagation} is true when $H \in \CC^{1,\rm{u}}(A)$. Third, we point out that \eqref{NewFormula4} is a consequence of the Riemann-Lebesgue Lemma (see \eqref{RL} below) when $\psi = P_{\rm{ac}}(H)\psi$. This can be seen by writing the state in \eqref{NewFormula4} as $W(H+\i)^{-1} e^{-\i tH} P_{\rm c} (H) E_{\I} (H) (H+\i) \psi$ and noting that $W(H+\i)^{-1} \in \K(\H)$ and $E_{\I} (H) (H+\i) \in \B(\H)$.

Propagation estimates \eqref{NewFormula3} and \eqref{NewFormula4} cannot hold uniformly on the unit sphere of states in $\H$, for if they did, they would imply that the norm of a time-constant operator goes to zero as $t$ goes to infinity. Moving forward, we seek a propagation estimate uniform on the unit sphere and go deeper into the hypotheses. Let $\H$ be a Hilbert space. Let $H_0$ be a self-adjoint operator on $\H$, with domain $\D(H_0)$. We use standard notation and set $\H^2 := \D(H_0)$ and $\H^1 := \D(\langle H_0 \rangle ^{1/2})$, the form domain of $H_0$. Also, $\H^{-2} := (\H^2)^*$, an)d $\H^{-1} := (\H^1)^*$. The following continuous and dense embeddings hold:
\begin{equation}
\label{continuous embedding}
\H^2 \subset \H^{1} \subset \H = \H^* \subset \H^{-1} \subset \H^{-2}.
\end{equation}
These are Hilbert spaces with the appropriate graph norms. We split the assumptions into two categories: the spectral and the regularity assumptions. We start with the former.

\noindent \textbf{Spectral Assumptions:}
\begin{axioms}
\item \label{item:A1} : $H_0$ is a semi-bounded operator with form domain $\H^1$.
\item \label{item:A4} : $V$ defines a symmetric quadratic form on $\H^{1}$.
\item \label{item:A5} : $V \in \mathcal{K}(\H^{1}, \H^{-1})$.
\end{axioms}
Importantly, these assumptions allow us to define the perturbed Hamiltonian $H$. Indeed, \ref{item:A1} - \ref{item:A5} imply, by the KLMN Theorem (\cite[Theorem X.17]{RS2}), that $H := H_0 + V$ in the form sense is a semi-bounded self-adjoint operator with domain $\D(\langle H \rangle ^{1/2}) = \H^{1}$. Furthermore, we have by Weyl's Theorem that $\sigma_{\text{ess}}(H) = \sigma_{\text{ess}}(H_0)$.

Before proceeding with the other assumptions, let us take a moment to recall two well-known propagation estimates that typically hold under these few assumptions.  The first estimate is the RAGE Theorem due to Ruelle \cite{Ru}, Amrein and Georgescu \cite{AG} and Enss \cite{E}. It states that for any self-adjoint operator $H$ and any $W \in \B(\H)$ that is $H$-relatively compact, and any $\psi \in \H$,
\begin{equation}
\label{RAGE}
\lim \limits_{T \to \pm \infty} \frac{1}{T} \int_0 ^T \|W  P_{\rm{c}}(H) e^{-\i tH} \psi\| ^2 dt =0.
\end{equation}
We refer to the appendix \ref{RAGEappendix} for an observation on this Theorem. Let us go back to Example \ref{ex:1}, the case of the Schr\"odinger operators. Assuming only that the short- and long-range potentials be bounded and go to zero at infinity, we see that \ref{item:A1} - \ref{item:A5} hold. Thus $H := H_0 + V_{\rm sr}(Q) + V_{\rm   lr}(Q)$ is self-adjoint. Moreover $\mathbf{1}_{\Sigma}(Q)$ is a bounded operator that is $H$-relatively compact whenever $\Sigma \subset \R^d$ is a compact set. Hence, in this example, the above spectral assumptions and the RAGE Theorem combine to yield the following very meaningful propagation estimate:
\begin{equation}
\label{escapeCompact}
\lim \limits_{T \to \pm \infty} \frac{1}{T} \int_0 ^T \|\mathbf{1}_{\Sigma}(Q)  P_{\rm{c}}(H) e^{-\i tH} \psi\| ^2 dt =0.
\end{equation}
In words, the scattering state $P_{\rm{c}}(H) \psi$ escapes all compact sets averagely in time. The second standard estimate we wish to recall is the Riemann-Lebesgue Lemma, see e.g. \cite[Lemma 2]{RS3}. It states that for any self-adjoint operator $H$ and any $W \in \B(\H)$ that is $H$-relatively compact, and any $\psi \in \H$,
\begin{equation}
\label{RL}
\lim \limits_{t \to \pm \infty} \|W  P_{\rm{ac}}(H) e^{-\i tH} \psi\| =0.
\end{equation}
In particular, this estimate implies that the Fourier transform of the spectral measure 
\[\Omega \mapsto \langle \psi, E_{\Omega} (H) P_{\rm ac} (H) \psi \rangle = \mu_{(\psi, P_{\rm ac}(H) \psi)} (\Omega) \] 
goes to zero at infinity, i.e.\ 
\[ \int _{\R} e^{-\i tx} d\mu_{(\psi, P_{\rm ac}(H) \psi)}(x) \to 0 \quad \text{as}  \quad t \to \pm \infty.\] 
Applying the Riemann-Lebesgue Lemma to Example \ref{ex:1} gives for all compact sets $\Sigma \subset \R^d$,
\begin{equation}
\label{escapeCompact2}
\lim \limits_{t \to \pm \infty} \|\mathbf{1}_{\Sigma}(Q)  P_{\rm{ac}}(H) e^{-\i tH} \psi\| =0.
\end{equation}
Thus, the scattering state $P_{\rm{ac}}(H) \psi$ escapes all compact sets in the long run. In contrast, a basic argument such as the one given in the Appendix \ref{Heuristic} as well as estimates like \eqref{Propagation} or \eqref{NewFormula3} indicate that the scattering states tend to concentrate in regions where the conjugate operator $A$ is prevalent. We continue with the assumptions concerning the operator $H$.

\noindent \textbf{Regularity Assumptions:} There is a self-adjoint operator $A$ on $\H$ such that
\begin{axioms} 
\setcounter{enumi}{3}
\item \label{item:A3} : $e^{\i tA} \H^{1} \subset \H^{1}$ for all $t \in \R$.
\item \label{item:A2} : $H_0 \in \CC^2(A; \H^{1}, \H^{-1})$.
\item \label{item:A6} : $V \in \CC^{1,\text{u}}(A; \H^{1}, \H^{-1})$.
\varitem{'} \label{item:A6prime} : $V \in \CC^{1}(A;\H^{1}, \H^{-1})$ and $[V,\i A]_{\circ} \in \mathcal{K}(\H^{1}, \H^{-1})$.
\end{axioms}

First we note that $\CC^{\sharp}(A; \H^1, \H^{-1}) \subset \CC^{\sharp}(A)$ for $\sharp \in \{1; 1\text{,u} ; 2\}$. We refer to Section \ref{RegularityClasses} for a complete description of these classes. While \ref{item:A3} and \ref{item:A2} are standard assumptions to apply Mourre theory, \ref{item:A6} is significantly weaker. It causes $H$ to have no more than the $\CC^{1,\text{u}}(A; \H^{1}, \H^{-1})$ regularity, in which case the LAP is not always true, as mentioned previously. Proposition \ref{PropC1UU} proves the equivalence between \ref{item:A6} and \ref{item:A6prime}. In many applications, \ref{item:A6prime} is more convenient to check than \ref{item:A6}. 

Let $\mu^A(H_0)$ be the set of points where a Mourre estimate holds for $H_0$. The assumptions mentioned above imply that $\mu^A(H) = \mu ^A(H_0)$, by Lemma \ref{Lemma1}. The uniform propagation estimate derived in this paper is the following:

\begin{theorem}\label{Main}
Suppose \ref{item:A1} through \ref{item:A6}. Let $\lambda \in \mu^A(H)$ be such that $\ker(H-\lambda) \subset \D(A)$. Then there exists a bounded open interval $\I$ containing $\lambda$ such that for all $s > 1/2$, 
\begin{equation}
\label{NewFormula}
\lim \limits_{T \to \pm \infty} \sup \limits_{\substack{\psi \in \H \\ \| \psi \| \leqslant 1 }} \frac{1}{T} \int_0 ^T  \| \langle A \rangle ^{-s} P_{\rm{c}}(H) E_{\I}(H)  e^{-\i t H} \psi \|^2 \ dt = 0.
\end{equation}
\end{theorem}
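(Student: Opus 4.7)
The plan is to recast \eqref{NewFormula} as an operator-norm statement and then control that norm via a Mourre--type commutator inequality together with a telescoping identity. First, for any bounded operator $C$, $\sup_{\|\psi\|\le1}\|C\psi\|^2=\|C^{*}C\|$, so a Fubini argument identifies the quantity inside the limit in \eqref{NewFormula} with
\[ M_T := \Big\|\frac{1}{T}\int_0^T e^{\i tH}\,B\,e^{-\i tH}\,dt\Big\|,\qquad B:=E_\I(H)P_{\rm c}(H)\langle A\rangle^{-2s}P_{\rm c}(H)E_\I(H). \]
It suffices to show $M_T\to 0$ as $T\to\pm\infty$.

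Since $s>1/2$, choose $g\in C^\infty(\R)$ bounded with $g'(a)=\langle a\rangle^{-2s}$, e.g.\ $g(a):=\int_{-\infty}^a\langle t\rangle^{-2s}dt$, and $\phi\in C_c^\infty(\R)$ equal to $1$ on $\I$ with support still in $\mu^A(H)$. Integrating the identity $\frac{d}{dt}(e^{\i tH}g(A)e^{-\i tH})=e^{\i tH}\i[H,g(A)]e^{-\i tH}$ from $0$ to $T$ yields
\[ \Big\|\frac{1}{T}\int_0^T e^{\i tH}\phi(H)P_{\rm c}(H)\i[H,g(A)]P_{\rm c}(H)\phi(H)e^{-\i tH}dt\Big\|\le\frac{2\|g\|_\infty}{T}, \]
so it is enough to dominate $B$ in operator order by some multiple of $\phi(H)P_{\rm c}(H)\i[H,g(A)]P_{\rm c}(H)\phi(H)$, modulo an error whose Cesàro mean vanishes in norm. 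The assumption $\lambda\in\mu^A(H)$, the Virial theorem, and $\ker(H-\lambda)\subset\D(A)$ permit shrinking $\I$ around $\lambda$ so that $\|P_{\rm c}(H)E_\I(H)KE_\I(H)P_{\rm c}(H)\|<c/2$ in \eqref{Mestimate} (compactness of $K$ upgrades the strong convergence $E_\I(H)\to E_{\{\lambda\}}(H)$ to norm convergence, and $P_{\rm c}(H)E_{\{\lambda\}}(H)=0$), yielding the strict Mourre estimate $P_{\rm c}(H)E_\I(H)\i[H,A]_\circ E_\I(H)P_{\rm c}(H)\ge\tfrac{c}{2}P_{\rm c}(H)E_\I(H)$. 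A Helffer--Sj\"ostrand expansion of $g(A)$, licensed by \ref{item:A2} and \ref{item:A6}, then gives $\i[H,g(A)]\simeq g'(A)^{1/2}\,\i[H,A]_\circ\,g'(A)^{1/2}$ up to a remainder $\mathcal R$; combining with the strict Mourre estimate and $g'(A)=\langle A\rangle^{-2s}$ produces the desired operator-order bound, the time-averaged norm of $\mathcal R$ being handled by the improved RAGE formula of the appendix.

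The principal difficulty is justifying the Helffer--Sj\"ostrand commutator expansion under only $\CC^{1,\mathrm u}$ regularity on $V$: the textbook proof invokes the iterated commutator $[[H,\i A]_\circ,\i A]_\circ$, which is \emph{not} available under \ref{item:A6}. The uniform continuity of $t\mapsto e^{-\i tA}(H+\i)^{-1}e^{\i tA}$ (the defining feature of $\CC^{1,\mathrm u}$) has to be exploited to produce a remainder $\mathcal R$ whose \emph{time-averaged} operator norm, rather than $\mathcal R$ itself, is small; this is precisely the role of the improved RAGE enhancement developed in the appendix.
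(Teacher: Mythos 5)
Your high-level structure is close to the paper's: both start from a weighted commutator whose derivative is $\langle\cdot\rangle^{-2s}$, use a telescoping (Fubini/fundamental theorem) identity to control its time Cesàro mean, invoke the Mourre estimate, and discharge compact error terms with the improved RAGE theorem of Appendix B. However, there is a genuine gap in the central step, and it is exactly the step you flag as "the principal difficulty."

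When you expand $[H,g(A)]$ via Helffer--Sj\"ostrand and then try to pull $g'(A)^{1/2}$ out to the two sides, you must commute resolvents of $A$ past the commutator $[H,\i A]_\circ$. Under the standing hypotheses $[H,\i A]_\circ$ is \emph{not} in $\CC^1(A)$ (that would require $H\in\CC^2(A)$), so those commutations simply do not exist as bounded operators, and uniform continuity of $t\mapsto e^{-\i tA}(H+\i)^{-1}e^{\i tA}$ does not repair this: $\CC^{1,\mathrm u}(A)$ gives no quantitative modulus of continuity, hence no smallness of a remainder $\mathcal R$ in operator norm. Moreover, the remainder you would generate is \emph{not compact}, so the improved RAGE formula cannot absorb it: Theorem~\ref{CFKSRAGE} applies only to compact (or $H$-relatively compact) $W$, and of the error pieces that actually appear, only the ones coming from $[V,\i A]_\circ$ and from $\eta(R)-\eta(R_0)$ are compact. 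The rest is merely bounded.

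The paper resolves both issues with two ingredients absent from your sketch. First, it replaces $g(A)$ by $\varphi(A/L)$ with a large parameter $L$: every Helffer--Sj\"ostrand commutation then acquires an extra factor $L^{-1}$, so the non-compact, non-sign-definite errors $B_1,\dots,B_6$ appear at order $L^{-2}$ while the positive Mourre term sits at order $L^{-1}$, and taking $L$ large restores positivity. This is the "weighted Mourre estimate" mechanism of \cite{G,GJ2}. Second, it splits $[H,\i A]_\circ=[H_0,\i A]_\circ+[V,\i A]_\circ$ and performs the resolvent commutations only on the $H_0$ part, which is legal because \ref{item:A2} puts $[R_0,\i A]_\circ$ in $\CC^1(A)$; the $V$ part is compact by \ref{item:A6prime} and is sent to the $K$-term that RAGE kills. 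Your proposal never separates $H_0$ from $V$, so it never gains the needed regularity, and it has no $L$ to make the residual bounded errors small. (A smaller point: the paper also passes to the bounded resolvent $R(\varsigma)$ to dodge domain questions; your $\phi(H)$ cutoffs partially substitute but you would still need to justify $[H,g(A)]$ as a bounded form on the relevant spectral subspace.) Your reformulation of \eqref{NewFormula} as the norm of the Cesàro mean of $e^{\i tH}Be^{-\i tH}$, and your derivation of the strict Mourre estimate for $P_{\rm c}(H)E_\I(H)$ from compactness of $K$, are both correct and match the paper in spirit.
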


This formula is to be compared with \eqref{Propagation}, \eqref{NewFormula3} and \eqref{RAGE}. First note that \eqref{Propagation} implies \eqref{NewFormula}. Also, on the one hand, \eqref{NewFormula} without the supremum is a trivial consequence of \eqref{NewFormula3}. On the other hand, if \eqref{NewFormula3} held uniformly on the unit sphere, then it would imply \eqref{NewFormula}. But we saw that this is not the case. So the main gain in Theorem \ref{Main} over Theorem \ref{Main2} is the supremum. Let us further comment the supremum in \eqref{NewFormula}. This is because one can in fact take the supremum in the RAGE formula, as explained in the Appendix \ref{RAGEappendix}. The parallel with the RAGE formula (see Theorem \ref{CFKSRAGE}) raises an important concern however. The novelty of the propagation estimate \eqref{NewFormula} depends critically on the non-compactness of the operator $\langle A \rangle ^{-s} E_{\I}(H)$. As mentioned previously, it appears that $\langle A \rangle ^{-s} E_{\I}(H)$ is not always compact. Theorem \ref{Main} therefore appears to be a new result for multi-dimensional Hamiltonians. 

To summarize, the various propagation estimates discussed in the Introduction are listed in Table \ref{tab:2} according to the regularity of the potential $V$. Sufficient regularity for the free operator $H_0$ is implicit. In this table, question marks indicate open problems and R.-L.\ stands for Riemann-Lebesgue. 

\begin{table}[ht]
\centering 
\begin{tabular}{ !{\vrule width 1.0pt} c| c| c| c| c| c| c!{\vrule width 1.0pt}} 
\ChangeRT{1.0pt}
$V$ is of & RAGE & R.-L. & Prop. estimates& Prop. & Kato - type  & LAP \\  
class & formula & formula &  \eqref{NewFormula3} and  \eqref{NewFormula4}  & estimate \eqref{NewFormula} & Prop.\ estimate & \\
\ChangeRT{1.0pt}
$\CC^1(A)$ & \checkmark & \checkmark & ? & ? & ? & ? \\ 
$\CC^{1,\text{u}}(A)$ & \checkmark & \checkmark & \checkmark & \checkmark & ? & ? \\ 
$\CC^{1,1}(A)$ & \checkmark & \checkmark & \checkmark & \checkmark & \checkmark & \checkmark \\ 
$\CC^{2}(A)$ & \checkmark & \checkmark & \checkmark & \checkmark & \checkmark & \checkmark \\ 
\ChangeRT{1.0pt}
\end{tabular}
\caption{The estimates for $H$ depending on the regularity of the potential $V$} 
\label{tab:2}
\end{table}

We underline that the LAP has been derived for several specific systems where the Hamiltonian $H$ belongs to a regularity class as low as $\CC^1(A)$, and sometimes even lower (see for example \cite{DMR}, \cite{GJ2}, \cite{JM} and \cite{Ma1} to name a few). In all these cases, a strong propagation estimate of type \eqref{Propagation} and absence of singular continuous spectrum follow. We also note that the derivation of the propagation estimate \eqref{NewFormula} is in fact very similar to the derivation of a weighted Mourre estimate which is used in the proof of a LAP for Hamiltonians with oscillating potentials belonging to the $\CC^1(A)$ class, see \cite{G} and \cite{GJ2}.

The article is organized as follows: in Section \ref{RegularityClasses}, we review the classes of regularity in Mourre theory and in particular prove the equivalence between \ref{item:A6} and \ref{item:A6prime}. In Section \ref{MourreEstimateDiscussion}, we discuss the Mourre estimate and justify that under the assumptions of Theorem \ref{Main}, $H$ and $H_0$ share the same set of points where a Mourre estimate holds. In Section \ref{Section:Examples}, we give examples of continuous and discrete Schr\"odinger operators that fit the assumptions of Theorems \ref{Main2} and \ref{Main}.  In Section \ref{PROOF2}, we prove Theorem \ref{Main2} and in Section \ref{PROOF}, we prove Theorem \ref{Main}. In Section \ref{Section:Compactness}, we discuss the compactness of the operator $\langle A \rangle ^{-s} E_{\I}(H)$. In Appendix \ref{Heuristic}, we provide a simple argument as to why we expect scattering states to evolve in the direction where the conjugate operator prevails. In Appendix \ref{RAGEappendix} we make the observation that one may in fact take a supremum in the RAGE Theorem. Finally, in Appendix \ref{Appendix} we review facts about almost analytic extensions of smooth functions that are used in the proof of the uniform propagation estimate.

\noindent \textbf{Acknowledgments:} We are very thankful to Jean-Fran\c{c}ois Bony, Vladimir Georgescu, Philippe Jaming and Thierry Jecko for precious discussions. We are very grateful to Serge Richard for explaining to us how \cite{Ri} could be used to improve our previous results. Finally, we warmly thank the anonymous referee for a meticulous reading of the manuscript and offering numerous valuable improvements. The authors were partially supported by the ANR project GeRaSic (ANR-13-BS01-0007-01).








\section{The classes of regularity in Mourre theory}
\label{RegularityClasses}

We define the classes of regularity that were introduced in \eqref{Chain}. Let $T \in \B(\H)$ and $A$ be a self-adjoint operator on the Hilbert space $\mathcal{H}$. Consider the map
\begin{align}
\label{DefC1u}
\R \ni t & \mapsto e^{-\i tA} Te^{\i tA}  \in \B(\H).
\end{align}
Let $k \in \N$. If the map is of class $\CC^k(\R; \B(\H))$, with $\B(\H)$ endowed with the strong operator topology, we say that $T \in \CC^k(A)$; whereas if the map is of class $\CC^k(\R ; \mathcal{B}(\mathcal{H}))$, with $\mathcal{B}(\mathcal{H})$ endowed with the operator norm topology, we say that $T \in \CC^{k,\text{u}}(A)$. Note that $\CC^{k,\text{u}}(A) \subset \CC^{k}(A)$ is immediate from the definitions. If $T \in \CC^1(A)$, then the derivative of the map \eqref{DefC1u} at $t=0$ is denoted $[T,\i A]_{\circ}$ and belongs to $\B(\H)$. Also, if $T_1,T_2 \in \B(\H)$ belong to the $\CC^1(A)$ class, then so do $T_1+T_2$ and $T_1T_2$.
We say that $T \in \Coneone$ if 
\begin{equation*}
\int_0 ^1 \Big\| [ [T, e^{\i tA}]_{\circ}, e^{ \i tA}]_{\circ} \Big \| t^{-2} dt < + \infty.
\end{equation*}
The proof that $\Ctwo \subset \Coneone \subset \Cu$ is given in \cite[Section 5]{ABG}. This yields \eqref{Chain}. 

Now let $T$  be a self-adjoint operator (possibly unbounded), with spectrum $\sigma(T)$. Let $z \in \C \setminus \sigma(T)$. We say that $T\in \CC^{\sharp}(A)$ if $(z-T)^{-1}\in \CC^{\sharp}(A)$, for $\sharp \in \{k; k\text{,u}; 1\text{,1} \}$. This definition does not depend on the choice of $z \in \C \setminus \sigma(T)$, and furthermore if $T$ is bounded and self-adjoint then the two definitions coincide, see \cite[Lemma 6.2.1]{ABG}. If $T \in \CC^1(A)$, one shows that $[T, \i A]_{\circ} \in \B(\D(T),\D(T)^*)$ and that the following formula holds:
\begin{equation}
\label{CommutatorResolvent}
[(z-T)^{-1}, \i A]_{\circ} = (z-T)^{-1} [T,\i A]_{\circ} (z-T)^{-1}.
\end{equation}

These definitions can be refined. Let $\G$ and $\H$ be Hilbert spaces verifying the following continuous and dense embeddings $\G \subset \H = \H^* \subset \G^*$, where we have identified $\H$ with its antidual $\H^*$ by the Riesz isomorphism Theorem. Let $A$ be a self-adjoint operator on $\H$, and suppose that the semi-group $\{e^{\i tA}\}_{t \in \R}$ stabilizes $\G$. Then by duality it stabilizes $\G^*$. Let $T$ be a self-adjoint operator on $\H$ belonging to $\B(\G, \G^*)$ and consider the map
\begin{equation}
\label{DefC1GG}
\R \ni t \mapsto e^{-\i tA} Te^{\i tA}  \in \B(\G,\G^*).
\end{equation}
If this map is of class $\CC^k(\R; \B(\G,\G^*))$, with $\B(\G,\G^*)$ endowed with the strong operator topology, we say that $T \in \CC^k(A; \G, \G^*)$; whereas if the map is of class $\CC^{k}(\R ; \mathcal{B}(\G,\G^*))$, with $\B(\G,\G^*)$ endowed with the norm operator topology, we say that $T \in \CC^{k,\text{u}}(A; \G, \G^*)$. If $T \in \CC^1(A; \G, \G^*)$, then the derivative of map \eqref{DefC1GG} at $t=0$ is denoted by $[T, \i A]_{\circ}$ and belongs to $\mathcal{B}(\G,\G^*)$. Moreover, by \cite[Proposition 5.1.6]{ABG}, $T \in \CC^{\sharp}(A; \G, \G^*)$ if and only if $(z-T)^{-1} \in\CC^{\sharp}(A; \G^*, \G)$ for all $z \in \C \setminus \sigma(T)$ and $\sharp \in \{ k; k\text{,u} \}$. This notably implies that $\CC^{\sharp}(A; \G, \G^*) \subset \CC^{\sharp}(A)$.

In the setting of Theorem \ref{Main}, $\G = \H^{1} := \D(\langle H_0 \rangle^{1/2})$, and $T$ stands for $H_0$, $V$ or $H$. In all cases $T\in \B(\H^1,\H^{-1})$. We also assume that $\{e^{\i tA}\}_{t \in \R}$ stabilizes $\H^1$, see \ref{item:A3}. Consider the map
\begin{equation}
\label{Map22}
\R \ni t \mapsto \langle H_0 \rangle ^{-1/2} e^{-\i tA} T e^{\i tA} \langle H_0 \rangle ^{-1/2} \in \B(\H).
\end{equation}
The latter operator belongs indeed to $\B(\H)$ since the domains concatenate as follows:
\begin{equation*}
\underbrace{\langle H_0 \rangle ^{-1/2}}_{\in \B(\H^{-1},\H)} \underbrace{e^{-\i tA}}_{\in \B(\H^{-1},\H^{-1})} \underbrace{T}_{\in \B(\H^{1}, \H^{-1})} \underbrace{e^{\i tA}}_{\in \B(\H^1,\H^1)} \underbrace{\langle H_0 \rangle ^{-1/2}}_{\in \B(\H, \H^1)}.
\end{equation*}
We remark that $T \in \CC^{k} (A; \H^1,\H^{-1})$ is equivalent to the map \eqref{Map22} being of class $\CC^k(\R; \B(\H))$, with $\B(\H)$ endowed with the strong operator topology; whereas $T \in \CC^{k,\text{u}} (A; \H^1,\H^{-1})$ is equivalent to the map being of class $\CC^k(\R; \B(\H))$, with $\B(\H)$ endowed with the norm operator topology.

In many applications, the free operator $H_0$ has a nice regularity with respect to the conjugate operator $A$, i.e.\ $H_0 \in \CC^k(A; \G, \G^*)$ for some $k \geqslant 2$ and for some $\G \subset \H$. However, the perturbation $V$ typically doesn't have very much regularity w.r.t.\ $A$ and showing that $V$ is of class $\CC^{1,\text{u}}(A; \G,\G^*)$ directly from the definition is usually not very practical. To ease the difficulty we provide the following criterion. Its proof is inspired by \cite[Lemma 8.5]{Ge}.

\begin{proposition} 
\label{PropC1UU}
Suppose that $T \in \K( \H^1,\H^{-1}) \cap \CC^1(A;  \H^1,\H^{-1})$. Then $T \in \CC^{1,\rm{u}}(A; \H^1,\H^{-1})$ if and only if $[T,\i A]_{\circ} \in \K( \H^1,\H^{-1})$. 
\end{proposition}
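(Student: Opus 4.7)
The easy implication is that $T \in \CC^{1,\mathrm{u}}(A; \H^1, \H^{-1})$ forces $[T, \i A]_\circ \in \K(\H^1, \H^{-1})$. I would observe that for every $t \neq 0$, the operator $e^{-\i tA} T e^{\i tA}$ belongs to $\K(\H^1, \H^{-1})$: indeed $T$ is compact while $e^{\pm \i tA}$ is bounded on $\H^1$ (by \ref{item:A3}) and, by duality and reflexivity, on $\H^{-1}$. Consequently the difference quotient $t^{-1}(e^{-\i tA} T e^{\i tA} - T)$ lies in $\K(\H^1, \H^{-1})$, and by the hypothesized norm convergence together with the fact that $\K(\H^1, \H^{-1})$ is norm closed in $\B(\H^1, \H^{-1})$, the limit $[T, \i A]_\circ$ is compact.

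For the reverse direction, set $K := [T, \i A]_\circ \in \K(\H^1, \H^{-1})$ and $f(t) := e^{-\i tA} T e^{\i tA}$. The $\CC^1(A; \H^1, \H^{-1})$ hypothesis tells me that $f$ is strongly differentiable with $f'(t) = e^{-\i tA} K e^{\i tA}$, so the task is to upgrade this to norm differentiability. The plan splits into two steps: (i) show $f'$ is norm continuous, then (ii) deduce norm differentiability of $f$ via a Bochner-integral representation.

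Step (i) is the technical heart and the main obstacle. The lemma to prove is: for any $K \in \K(\H^1, \H^{-1})$, the map $s \mapsto e^{-\i sA} K e^{\i sA}$ is norm continuous from $\R$ into $\B(\H^1, \H^{-1})$. First, $K$ sends the unit ball of $\H^1$ into a relatively compact subset of $\H^{-1}$; combined with strong continuity of $\{e^{-\i sA}\}$ on $\H^{-1}$ (inherited from \ref{item:A3} via reflexive duality), a standard compact-set uniform-continuity argument upgrades this to norm continuity of $s \mapsto e^{-\i sA} K$ in $\B(\H^1, \H^{-1})$. For $s \mapsto K e^{\i sA}$, I would pass to adjoints: Schauder's theorem, together with the identifications $(\H^1)^* = \H^{-1}$ and $(\H^{-1})^* = \H^1$, yields $K^* \in \K(\H^1, \H^{-1})$, and since $(K e^{\i sA})^* = e^{-\i sA} K^*$, the continuity claim for $s \mapsto K e^{\i sA}$ follows from the case just treated by taking adjoints. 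Composing the two one-sided statements gives the lemma.

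Step (ii) is then essentially routine. Pairing $f(t) - f(0)$ with an arbitrary functional $\varphi \in \B(\H^1, \H^{-1})^*$, strong differentiability of $f$ gives $\varphi(f(t) - f(0)) = \int_0^t \varphi(f'(s))\, ds$; norm continuity of $f'$ from step (i) ensures that $\int_0^t f'(s)\, ds$ exists as a Bochner integral in $\B(\H^1, \H^{-1})$, so $\varphi(f(t) - f(0)) = \varphi\bigl(\int_0^t f'(s)\, ds\bigr)$. Since $\varphi$ is arbitrary, the identity $f(t) - f(0) = \int_0^t f'(s)\, ds$ holds in norm, which is exactly norm differentiability of $f$ with continuous derivative $f'$, i.e., $T \in \CC^{1,\mathrm{u}}(A; \H^1, \H^{-1})$.
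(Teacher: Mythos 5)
Your proof is correct, and its mathematical content coincides with the paper's: both reduce the hard direction to showing that $\tau\mapsto e^{-\i\tau A}[T,\i A]_\circ\,e^{\i\tau A}$ is norm continuous, which rests on the standard lemma that a compact operator composed with a strongly continuous group yields a norm-continuous family, and then both conclude via the fundamental theorem of calculus. The paper conjugates by $\langle H_0\rangle^{-1/2}$ so as to phrase everything in $\B(\H)$ and estimates the averaged difference quotient directly, whereas you stay in $\B(\H^1,\H^{-1})$ and use a Bochner-integral representation; these are equivalent. One thing you do more explicitly: your Schauder/adjoint reduction $(Ke^{\i sA})^*=e^{-\i sA}K^*$ isolates the reason why the right-multiplication term $K(e^{\i\tau A}-I)$ tends to zero in norm, namely the strong continuity of the \emph{adjoint} group; in the paper's triangle-inequality bound this same ingredient is used but obscured, since the $\xrightarrow[]{s}0$ annotation on the unconjugated factor would not by itself suffice. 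A small imprecision in your Step (ii): strong differentiability of $f$ gives $\varphi(f(t)-f(0))=\int_0^t\varphi(f'(s))\,ds$ directly only for the point-evaluation functionals $\varphi(\cdot)=\langle y,\cdot\,x\rangle$, $x,y\in\H^1$, not for every $\varphi\in\B(\H^1,\H^{-1})^*$; but this family separates points and commutes with the Bochner integral, so the conclusion stands once phrased that way.
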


\begin{remark}
\label{remqa}
The proof actually shows that if $T \in \mathcal{B}(\H^1,\H^{-1}) \cap \CC^1(A;  \H^1,\H^{-1})$ and $[T,\i A]_{\circ} \in \K(\H^1,\H^{-1})$, then  $T \in \CC^{1,\rm{u}}(A; \H^1,\H^{-1})$. Thus the compactness of $T$ is needed only for the reverse implication in Proposition \ref{PropC1UU}.
\end{remark}

\begin{remark}
\label{Remark2comp}
Adapting the proof of Proposition \ref{PropC1UU}, one can see that the results of Proposition \ref{PropC1UU} and Remark \ref{remqa} are still valid if $\K(\H^1,\H^{-1})$ (resp.\ $\CC^1(A;  \H^1,\H^{-1})$, resp.\ $\CC^{1,\rm{u}}(A; \H^1,\H^{-1})$, resp.\ $\mathcal{B}(\H^1,\H^{-1})$) is replaced by $\K(\H)$ (resp.\  $\CC^1(A)$, resp.\ $\CC^{1,\rm{u}}(A)$, resp.\ $\B(\H)$). 
\end{remark}

\begin{proof}
We start with the easier of the two implications, namely $T \in \CC^{1,\text{u}}(A; \H^1,\H^{-1})$ implies $[T,\i A]_{\circ} \in \K(\H^1,\H^{-1})$. Let 
\begin{align*}
\R \ni t & \mapsto \Lambda (t) := \langle H_0 \rangle ^{-1/2} e^{-\i tA} T e^{\i tA} \langle H_0 \rangle ^{-1/2} \in \B(\H).
\end{align*}
To say that $T \in \CC^{1,\text{u}}(A;\H^1,\H^{-1})$ is equivalent to $\Lambda$ being of class $\CC^1(\R,\B(\H))$, with $\B(\H)$ endowed with the norm operator topology. Since
\begin{equation*}
\langle H_0 \rangle ^{-1/2} [T,\i A]_{\circ} \langle H_0 \rangle ^{-1/2} = \lim \limits_{t \to 0} \frac{\Lambda(t)-\Lambda(0)}{t} 
\end{equation*}
holds w.r.t.\ the operator norm on $\B(\H)$ and $\Lambda(t) -\Lambda(0)$ is equal to 
\begin{equation*}
\underbrace{\langle H_0 \rangle ^{-1/2} e^{-\i tA} \langle H_0 \rangle ^{1/2}}_{\in \ \B(\H)} \underbrace{\langle H_0 \rangle ^{-1/2} T \langle H_0 \rangle ^{-1/2}}_{\in \ \K(\H)} \underbrace{\langle H_0 \rangle ^{1/2} e^{\i tA} \langle H_0 \rangle ^{-1/2}}_{\in \ \B(\H)} - \underbrace{\langle H_0 \rangle ^{-1/2} T \langle H_0 \rangle ^{-1/2}}_{\in \ \K(\H)},
\end{equation*} 
we see that $\langle H_0 \rangle ^{-1/2} [T,\i A]_{\circ} \langle H_0 \rangle ^{-1/2} \in \K(\H)$  as a norm limit of compact operators. Hence $ [T,\i A]_{\circ} \in \K(\H^1,\H^{-1})$.

We now show the reverse implication. We have to show that the map $\Lambda$ is of class $\CC^1(\R, \B(\H))$. This is the case if and only if $\Lambda$ is differentiable with continuous derivative at $t=0$. Let
\begin{equation*}
\ell (t) := \langle H_0 \rangle ^{-1/2} e^{-\i tA} [T,\i A]_{\circ} e^{\i tA} \langle H_0 \rangle ^{-1/2} \in \B(\H).
\end{equation*}
The following equality holds strongly in $\mathcal{H}$ for all $t>0$ due to the fact that $T \in \CC^1(A,\H^1,\H^{-1})$:
\begin{equation}
\label{IntegralCommutator1}
\frac{\Lambda(t) -\Lambda(0)}{t} - \ell(0) = \frac{1}{t} \int_0 ^t \langle H_0 \rangle ^{-1/2} \left(e^{-\i \tau A} [T,\i A]_{\circ} e^{\i \tau A} - [T,\i A]_{\circ}\right) \langle H_0 \rangle ^{-1/2} d\tau.
\end{equation}
Let us estimate the integrand:
\begin{align}
\begin{split}
\label{Triangle1}
& \quad \ \big\|\langle H_0 \rangle ^{-1/2}\left(e^{-\i \tau A} [T,\i A]_{\circ} e^{\i \tau A} - [T,\i A]_{\circ}\right) \langle H_0 \rangle ^{-1/2} \big\| \\
& \leqslant  \big\|\langle H_0 \rangle ^{-1/2} \left(e^{-\i \tau A} [T,\i A]_{\circ} e^{\i \tau A} - e^{-\i \tau A} [T,\i A]_{\circ}\right) \langle H_0 \rangle ^{-1/2} \big \|  \\
& \quad + \big \|\langle H_0 \rangle ^{-1/2} \left(e^{-\i \tau A} [T,\i A]_{\circ} - [T,\i A]_{\circ}\right) \langle H_0 \rangle ^{-1/2} \big\| \\
& \leqslant \Big\| \underbrace{\langle H_0 \rangle ^{-1/2} e^{-\i \tau A}\langle H_0 \rangle ^{1/2}}_{\| \cdot \| \leqslant 1} \underbrace{\langle H_0 \rangle ^{-1/2} [T,\i A]_{\circ} \langle H_0 \rangle ^{-1/2}}_{\in \ \K(\H)} \underbrace{\left(\langle H_0 \rangle ^{1/2} e^{\i \tau A}\langle H_0 \rangle ^{-1/2} -I\right)}_{\xrightarrow[]{s} 0} \Big\| \\
& \quad + \Big \|\underbrace{\left(\langle H_0 \rangle ^{-1/2}e^{-\i \tau A}\langle H_0 \rangle ^{1/2} -I \right)}_{\xrightarrow[]{s} 0} \underbrace{\langle H_0 \rangle ^{-1/2}[T,\i A]_{\circ} \langle H_0 \rangle ^{-1/2}}_{\in \ \K(\H)} \Big \|.
\end{split}
\end{align}
Thus the integrand of \eqref{IntegralCommutator1} converges in norm to zero as $t$ goes to zero. It follows that the l.h.s.\ of \eqref{IntegralCommutator1} converges in norm to zero, showing that $\Lambda'(0) = \ell (0)$. It easily follows that $\Lambda'(t) = \ell (t)$ for all $t \in \R$. Again invoking \eqref{Triangle1} shows that $\Lambda'$ is continuous at $t=0$, completing the proof.
\qed
\end{proof}

\section{A few words about the Mourre estimate}
\label{MourreEstimateDiscussion}
This section is based on the content of \cite[Section 7.2]{ABG}, where the results are presented for a self-adjoint operator $T \in \CC^1(A)$, which (we recall) contains the $\CC^1(A; \G,\G^*)$ class. 
Let $T$ be a self-adjoint operator on $\H$ with domain $\D(T) \subset \H$. Let $\G$ be a subspace such that 
\[ \D(T) \subset \G \subset \D(\langle T \rangle ^{1/2}) \subset \H = \H^* \subset \D(\langle T \rangle ^{1/2})^* \subset \G^* \subset \D(T)^*.\]
If $T \in \CC^1(A,\G, \G^*)$, then in particular $[T, \i A]_{\circ} \in \B(\G, \G^*)$. If $\I \subset \R$ is a bounded interval, then $E_{\I}(T) \in \B(\H, \G)$ and by duality $E_{\I}(T) \in \B(\G^*, \H)$. We say that the \textit{Mourre estimate} holds for $T$ w.r.t.\ $A$ on the bounded interval $\I$ if there exist $c>0$ and $K \in \K(\H)$ such that 
\begin{equation}
\label{MourreEst}
E_{\I}(T)[T, \i A]_{\circ} E_{\I}(T) \geqslant c E_{\I}(T) + K
\end{equation}
in the form sense on $\H \times \H$. Note that both the l.h.s.\  and r.h.s.\ of \eqref{MourreEst} are well-defined bounded operators on $\H$. For reminder, if this estimate holds, then the total multiplicity of eigenvalues of $T$ in $\I$ is finite by \cite[Corollary 7.2.11]{ABG}, whereas if the estimate holds with $K=0$, then $\I$ is void of eigenvalues, as a result of the Virial Theorem \cite[Proposition 7.2.10]{ABG}. We let $\mu^A(T)$ be the collection of points belonging to neighborhood for which the Mourre estimate holds, i.e.\ 
\begin{equation*}
\mu^A(T) := \{ \lambda \in \R : \exists c>0, K \in \K(\H) \ \text{and} \  \mathcal{I} \ \text{open for which} \ \eqref{MourreEst} \ \text{holds for} \ T \ \text{on} \ \mathcal{I} \ \text{and} \ \lambda \in \I \}. 
\end{equation*}
This is an open set. It is natural to introduce a function defined on $\mu^A(T)$ which gives the best constant $c>0$ that can be achieved in the Mourre estimate, i.e.\  for $\lambda \in \mu^A(T)$, let
\begin{equation*}
\varrho_T ^A (\lambda) := \sup _{\mathcal{I} \ni \lambda} \big \{ \sup \{ c \in \R : E_{\mathcal{I}}(T) [T, \i A]_{\circ} E_{\mathcal{I}}(T) \geqslant c E_{\mathcal{I}}(T) + K, \ \text{for some} \ K \in \K(\H) \} \big \}.
\end{equation*}
Equivalent definitions and various properties of the $\varrho_T ^A$ function are given in \cite[Section 7.2]{ABG}. One very useful result that we shall use is the following: 
\begin{proposition} 
\label{conjugateResolvent}
\cite[Proposition 7.2.7]{ABG}
Suppose that $T$ has a spectral gap and that $T \in \CC^1(A)$. Let $R(\varsigma) := (\varsigma -T)^{-1}$, where $\varsigma$ is a real number in the resolvent set of $T$. Then
\begin{equation}
\label{ConvertResolvent}
\varrho_{T}^A(\lambda) = (\varsigma-\lambda)^2 \varrho_{R(\varsigma)}^A((\varsigma-\lambda)^{-1}).
\end{equation}
In particular, $T$ is conjugate to $A$ at $\lambda$ if and only if $R(\varsigma) $ is conjugate to $A$ at $(\varsigma-\lambda)^{-1}$. 
\end{proposition}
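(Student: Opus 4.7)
My plan is to prove the equality \eqref{ConvertResolvent} by establishing two matching inequalities, each obtained by transporting a Mourre estimate across the spectral bijection $t \mapsto f(t) := (\varsigma - t)^{-1}$. The key ingredient is the commutator identity \eqref{CommutatorResolvent}, which is valid since $T \in \CC^1(A)$, together with the identity of spectral projections $E_{\I}(T) = E_{f(\I)}(R(\varsigma))$, so that Mourre estimates for $T$ and for $R(\varsigma)$ live on spectral subspaces that literally coincide.

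For the inequality $(\varsigma-\lambda)^2 \varrho_{R(\varsigma)}^A((\varsigma-\lambda)^{-1}) \geqslant \varrho_T^A(\lambda)$, I would start from a Mourre estimate $E_{\I}(T)[T,\i A]_\circ E_{\I}(T) \geqslant c\, E_{\I}(T) + K$ on a bounded interval $\I \ni \lambda$ with $c$ arbitrarily close to $\varrho_T^A(\lambda)$, and sandwich it with $R(\varsigma)$ on both sides. Because $R(\varsigma)$ commutes with $E_{\I}(T)$, the identity \eqref{CommutatorResolvent} turns the left-hand side into $E_{\I}(T)[R(\varsigma),\i A]_\circ E_{\I}(T)$, while the right-hand side becomes $c\, R(\varsigma)^2 E_{\I}(T) + R(\varsigma) K R(\varsigma)$. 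The remainder is compact, and the functional calculus gives $R(\varsigma)^2 E_{\I}(T) \geqslant \bigl(\inf_{t\in\I}(\varsigma-t)^{-2}\bigr) E_{\I}(T)$. Writing $\I' := f(\I)$ and identifying $E_{\I}(T) = E_{\I'}(R(\varsigma))$, I obtain a Mourre estimate for $R(\varsigma)$ at $(\varsigma-\lambda)^{-1}$ whose constant converges to $(\varsigma-\lambda)^{-2}\varrho_T^A(\lambda)$ as $\I$ shrinks to $\{\lambda\}$.

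The reverse inequality proceeds symmetrically, but I must sandwich a Mourre estimate for $R(\varsigma)$ on some $\I' \ni (\varsigma-\lambda)^{-1}$ by the \emph{unbounded} operator $\varsigma - T$. Shrinking $\I'$ so that $0 \notin \overline{\I'}$ makes $(\varsigma - T)E_{\I'}(R(\varsigma))$ bounded via the functional calculus, so the sandwich is legitimate. Using $(\varsigma-T)R(\varsigma) = I$ together with \eqref{CommutatorResolvent}, the left-hand side collapses to $E_{\I}(T)[T,\i A]_\circ E_{\I}(T)$ with $\I := f^{-1}(\I')$, while the principal right-hand term is bounded below by $c'\bigl(\inf_{t\in\I}(\varsigma-t)^{2}\bigr) E_{\I}(T)$. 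Shrinking $\I'$ to $\{(\varsigma-\lambda)^{-1}\}$ produces the matching lower bound on $\varrho_T^A(\lambda)$. The ``in particular'' assertion is then automatic, since $\varrho_T^A(\lambda) > 0$ if and only if $\varrho_{R(\varsigma)}^A((\varsigma-\lambda)^{-1}) > 0$ by the established identity.

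The one nonroutine point, and the place I expect to spend real effort, is the treatment of the compact remainder after conjugation by the unbounded operator $\varsigma - T$ in the reverse step: $(\varsigma - T) K' (\varsigma - T)$ is a priori not even defined on all of $\H$. The remedy I would use is to observe that, without loss of generality, the compact term $K'$ in a Mourre estimate for $R(\varsigma)$ on $\I'$ may be replaced by $E_{\I'}(R(\varsigma)) K' E_{\I'}(R(\varsigma))$, after which $(\varsigma - T) K' (\varsigma - T)$ becomes a genuine compact operator thanks to the boundedness of $(\varsigma-T)E_{\I'}(R(\varsigma))$. Carefully tracking how the compact residue transforms under this unbounded conjugation is, I anticipate, the main technical obstacle in an otherwise straightforward transfer argument.
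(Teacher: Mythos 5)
Your argument is correct, and it takes a genuinely different route from the one the paper indicates. The paper does not give a proof in detail: it points to \cite[Proposition 7.2.5]{ABG} together with a Lemma characterizing $\varrho^A_T(\lambda)$ as a limit of infima of $\langle\psi,[T,\i A]_\circ\psi\rangle$ over unit vectors $\psi$ with $E_{\I(\lambda;\epsilon)}(T)\psi=\psi$, and invites the reader to transport that vector-level characterization through the map $f(t)=(\varsigma-t)^{-1}$, essentially working with explicit approximating sequences of states. You instead transport the Mourre estimate directly at the level of operator inequalities, sandwiching once by the bounded operator $R(\varsigma)$ and once by the bounded self-adjoint operator $(\varsigma-T)E_{\I}(T)$, then invoking the functional-calculus bounds $\inf_{t\in\I}(\varsigma-t)^{\mp 2}$ and letting $\I$ shrink to $\{\lambda\}$. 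This is more elementary in that it avoids the approximating-sequence Lemma entirely. Two remarks to tighten what you wrote. First, when you extract a nearly optimal constant $c$ from a Mourre estimate on some neighbourhood $\I$ and then also need $\inf_{t\in\I}(\varsigma-t)^{\mp 2}$ close to $(\varsigma-\lambda)^{\mp 2}$, you should say explicitly that a Mourre estimate with constant $c$ on $\I$ restricts to any $\I''\subset\I$ with the same $c$ and compact $E_{\I''}KE_{\I''}$, so the two limits can be taken simultaneously; this is exactly what justifies the shrinking. Second, your anticipated technical obstacle with the compact remainder in the reverse direction is in fact routine: once one observes (as you do) that $B:=(\varsigma-T)E_{\I'}(R(\varsigma))=(\varsigma-T)E_{\I}(T)$ is a bounded self-adjoint operator, the sandwiched remainder $B K' B$ is manifestly compact as a bounded conjugate of a compact operator, with no need to first massage $K'$ into $E_{\I'}K'E_{\I'}$ (though that normalization is always available and harmless). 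Beyond these minor points, the transfer argument is sound and recovers \eqref{ConvertResolvent} in both directions, with the $+\infty$ case ($\lambda\notin\sigma_{\rm ess}(T)$) handled automatically since the multiplicative factor stays bounded away from $0$ and $\infty$ near $\lambda$.
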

As a side note, this Proposition is stated without proof in \cite{ABG}, so we indicate to the reader that it may be proven following the same lines as that of \cite[Proposition 7.2.5]{ABG} together with the following Lemma, which is the equivalent of \cite[Proposition 7.2.1]{ABG}. Denote $\I(\lambda;\epsilon)$ the open interval of radius $\epsilon$ centered at $\lambda$.
\begin{Lemma} Suppose that $T \in \CC^1(A)$. If $\lambda \notin \sigma_{\rm{ess}}(H)$, then $\varrho^A_T(\lambda) = + \infty$. If $\lambda \in \sigma_{\rm{ess}}(H)$, then $\varrho^A_T(\lambda)$ is finite and given by 
\begin{equation*}
\varrho^A_T(\lambda) = \lim \limits_{\epsilon \to 0^+} \inf \big\{ \langle \psi, [T,\i A]_{\circ} \psi \rangle : \psi \in \H, \|\psi \| =1 \ \text{and} \ E_{\I(\lambda;\epsilon)}(T) \psi = \psi \big \}.
\end{equation*}
Furthermore, there is a sequence $(\psi_n)_{n=1}^{\infty}$ of vectors such that $\psi_n \in \H$, $\|\psi_n\| \equiv 1$, $\langle \psi_n, \psi_m \rangle = \delta_{nm}$, $E_{\I(\lambda;1/n)}\psi_n = \psi_n$ and $\lim _{n\to \infty} \langle \psi_n, [T, \i A]_{\circ} \psi_n \rangle = \varrho^A_T(\lambda)$. 
\end{Lemma}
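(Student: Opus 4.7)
The proof falls naturally into two cases. \textbf{Case 1:} $\lambda \notin \sigma_{\rm ess}(T)$. Then for $\epsilon > 0$ small enough, the range of $E_{\I(\lambda;\epsilon)}(T)$ is finite-dimensional. Given any $c \in \R$, set
\[
K := E_{\I(\lambda;\epsilon)}(T) [T,\i A]_{\circ} E_{\I(\lambda;\epsilon)}(T) - c\, E_{\I(\lambda;\epsilon)}(T),
\]
which is finite-rank, hence compact. With this choice the Mourre estimate \eqref{MourreEst} holds on $\I(\lambda;\epsilon)$ with constant $c$, so $\varrho^A_T(\lambda) = +\infty$.

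\textbf{Case 2:} $\lambda \in \sigma_{\rm ess}(T)$. Denote the infimum in the claimed formula by $m(\epsilon)$. The admissible set is non-empty because $E_{\I(\lambda;\epsilon)}(T)\H$ is infinite-dimensional. For any unit $\psi$ in this set, $\|(T+\i)\psi\|$ is bounded by $|\lambda|+\epsilon+1$, and since $[T,\i A]_{\circ} \in \B(\D(T),\D(T)^*)$, the quantity $\langle \psi,[T,\i A]_{\circ}\psi\rangle$ is uniformly bounded, so $m(\epsilon) \in \R$. Because the admissible set shrinks as $\epsilon$ decreases, $m(\epsilon)$ is non-decreasing, and $m := \lim_{\epsilon\to 0^+} m(\epsilon)$ exists in $\R$.

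The plan is to prove $\varrho^A_T(\lambda) = m$ via two inequalities. For $\varrho^A_T(\lambda) \geqslant m$: given any $c < m$, pick $\epsilon_0$ small with $m(\epsilon_0) > c$; normalizing shows that $\langle \psi,[T,\i A]_{\circ}\psi\rangle \geqslant c\|\psi\|^2$ for every $\psi$ in the range of $E_{\I(\lambda;\epsilon_0)}(T)$, which is precisely the Mourre estimate on $\I(\lambda;\epsilon_0)$ with $K=0$. For $\varrho^A_T(\lambda) \leqslant m$: assume the Mourre estimate holds on $\I \ni \lambda$ with constant $c$ and compact $K$. I would construct an orthonormal sequence $(\psi_n)$ with $E_{\I(\lambda;1/n)}(T)\psi_n = \psi_n$ and $\langle \psi_n,[T,\i A]_{\circ}\psi_n\rangle < m(1/n)+1/n$ for each $n$. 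Since each $E_{\I(\lambda;1/n)}(T)\H$ is infinite-dimensional, one can impose orthogonality to the $n-1$ previously chosen vectors without significantly perturbing the infimum. An orthonormal sequence converges weakly to zero, so compactness of $K$ forces $\langle \psi_n,K\psi_n\rangle \to 0$; combined with $\langle \psi_n,[T,\i A]_{\circ}\psi_n\rangle \geqslant c+\langle \psi_n,K\psi_n\rangle$, this yields $m \geqslant c$. The ``furthermore'' assertion is supplied by exactly this orthonormal sequence.

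The main obstacle is the construction of the near-minimizing orthonormal sequence. Naively, minimizers could concentrate on a fixed unit vector (for instance, an eigenvector of $T$ at $\lambda$ when $\lambda$ is embedded in $\sigma_{\rm ess}(T)$), which would block the orthogonality requirement. One handles this by applying the min-max principle to the bounded self-adjoint compression $E_{\I(\lambda;1/n)}(T)[T,\i A]_{\circ}E_{\I(\lambda;1/n)}(T)$ acting on its infinite-dimensional range: imposing orthogonality to an $(n-1)$-dimensional subspace raises the infimum by at most a gap controlled by the first few Rayleigh levels, which becomes negligible as $n\to \infty$ thanks to the monotonicity $m(1/n) \nearrow m$. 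This closes the argument and yields both the formula for $\varrho^A_T(\lambda)$ and the desired orthonormal Weyl-type sequence.
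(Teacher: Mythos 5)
Your Case 1 and your lower bound $\varrho^A_T(\lambda) \geqslant m$ (with $m$ the claimed right-hand side) are correct. The gap is in the upper bound $\varrho^A_T(\lambda) \leqslant m$, and the obstruction you flag — near-minimizers concentrating on an embedded eigenvector — is not a nuisance that the min-max principle disposes of: the assertion that the orthogonality penalty ``becomes negligible as $n\to\infty$'' is false. Take $\H = L^2(\R)\oplus\C$, $T = (-\Delta)\oplus\lambda$ with $\lambda > 0$, and $A = A_0\oplus 0$ where $A_0$ is the generator of dilations; then $T\in\CC^{\infty}(A)$, $\lambda\in\sigma_{\rm ess}(T)$ is an embedded eigenvalue with eigenvector $\phi := (0,1)$, and $[T,\i A]_\circ = 2(-\Delta)\oplus 0 \geqslant 0$. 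The Virial theorem gives $\langle\phi,[T,\i A]_\circ\phi\rangle = 0$, so $m(\epsilon)=0$ for every $\epsilon$ and $m=0$; yet any unit $\psi\in\ran E_{\I(\lambda;1/n)}(T)$ with $\psi\perp\phi$ satisfies $\langle\psi,[T,\i A]_\circ\psi\rangle\geqslant 2(\lambda-1/n)$. Orthogonalising against $\phi$ shifts the infimum from $0$ to roughly $2\lambda$, uniformly in $n$, so no near-minimising orthonormal sequence exists, and your inductive construction cannot work.

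The deeper point is that the identity $\varrho^A_T(\lambda) = m$ is false for the $\varrho^A_T$ defined in this paper, which permits a compact remainder $K$. In the same example, the rank-one choice $K = 0\oplus(-c)$ makes the Mourre estimate hold on any short interval $\I\ni\lambda$ contained in $(0,\infty)$ for every $c\leqslant 2\inf\I$, so $\varrho^A_T(\lambda)=2\lambda>0=m$. This is generic: whenever $\lambda$ is an eigenvalue of $T$, the Virial theorem forces $m\leqslant 0$, while the finite-rank eigenprojection can always be absorbed into $K$, so $\varrho^A_T(\lambda)$ has no reason to be $\leqslant 0$. What \cite[Proposition 7.2.1]{ABG} actually characterises by this formula is the companion function defined with $K=0$ (no compact remainder); for that function your two inequalities close up directly and the orthogonalisation issue disappears from the formula part. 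For the Lemma as literally stated one must either switch to that $K=0$ function, assume $\lambda$ is not an eigenvalue of $T$, or restrict the infimum to $\psi\perp\ker(T-\lambda)$; without one of these modifications the statement is not provable because it is not true.
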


We will be employing formula \eqref{ConvertResolvent} in the proof of Theorem \ref{Main}, but for the moment we apply it to show that under the assumptions of Theorem \ref{Main}, $H$ and $H_0$ share the same points where a Mourre estimate hold. The remark is done after \cite[Theorem 7.2.9]{ABG}. Let $R(z) := (z-T)^{-1}$ and $R_0(z) := (z-T_0)^{-1}$.  
\begin{Lemma}
\label{Lemma1}
Let $T_0$, $T$ and $A$ be self-adjoint operators on $\H$. Let $T_0$ have a spectral gap, and suppose that $T,T_0 \in \CC^{1,\rm{u}}(A)$. If $R(\i)-R_0(\i) \in \K(\H)$ then $\mu^A(T)=\mu^A(T_0)$.
\end{Lemma}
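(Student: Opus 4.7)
The plan is to reduce the claim to the bounded operators $R(\varsigma), R_0(\varsigma)$ for a real $\varsigma$ in a common resolvent set, and then exploit compactness together with the Weyl-sequence characterization of $\varrho^A$ provided by the Lemma stated just above. Since $R(\i) - R_0(\i) \in \K(\H)$, Weyl's theorem yields $\sigma_{\rm ess}(T) = \sigma_{\rm ess}(T_0)$; as $T_0$ has a spectral gap containing some open interval $(a,b) \subset \rho(T_0)$, and $\sigma(T) \cap (a,b)$ is contained in the discrete part of $\sigma(T)$ (hence at most countable), we may select a real $\varsigma \in (a,b) \cap \rho(T) \cap \rho(T_0)$. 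The standard resolvent identity $R_j(\varsigma) - R_j(\i) = (\i - \varsigma) R_j(\varsigma) R_j(\i)$ for $j \in \{0, \emptyset\}$ then easily implies $R(\varsigma) - R_0(\varsigma) \in \K(\H)$. Since $T, T_0 \in \CC^{1,\rm u}(A)$ forces $R(\varsigma), R_0(\varsigma) \in \CC^{1,\rm u}(A)$ by definition of the class for unbounded self-adjoint operators, we have $R(\varsigma) - R_0(\varsigma) \in \K(\H) \cap \CC^{1,\rm u}(A)$; Remark \ref{Remark2comp} then upgrades this to $[R(\varsigma), \i A]_\circ - [R_0(\varsigma), \i A]_\circ \in \K(\H)$.

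By Proposition \ref{conjugateResolvent}, the map $\lambda \mapsto (\varsigma - \lambda)^{-1}$ is a bijection identifying $\mu^A(T)$ with $\mu^A(R(\varsigma)) \setminus \{0\}$, and analogously in the subscripted version; since $0 \notin \sigma(R(\varsigma)) \cup \sigma(R_0(\varsigma))$, both $\varrho^A$-functions equal $+\infty$ at $\mu = 0$, so $0$ trivially belongs to both $\mu^A(R(\varsigma))$ and $\mu^A(R_0(\varsigma))$. It therefore suffices to prove $\varrho^A_{R(\varsigma)}(\mu) = \varrho^A_{R_0(\varsigma)}(\mu)$ for every $\mu \in \R$. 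By the Lemma preceding Lemma \ref{Lemma1}, this equality is automatic for $\mu$ outside the common essential spectrum (both sides are $+\infty$), so only $\mu \in \sigma_{\rm ess}(R(\varsigma)) = \sigma_{\rm ess}(R_0(\varsigma))$ needs to be treated.

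For such a $\mu$, that Lemma produces an orthonormal sequence $(\psi_n)$ with $E_{\I(\mu, 1/n)}(R(\varsigma))\psi_n = \psi_n$ and $\langle \psi_n, [R(\varsigma), \i A]_\circ \psi_n\rangle \to \varrho^A_{R(\varsigma)}(\mu)$; in particular $\psi_n \rightharpoonup 0$. Since $\|(R(\varsigma) - \mu)\psi_n\| \leq 1/n$ and $R(\varsigma) - R_0(\varsigma) \in \K(\H)$ acts on the weak null sequence, $a_n := \|(R_0(\varsigma) - \mu)\psi_n\| \to 0$. Setting $\delta_n = \sqrt{a_n}$, the spectral theorem yields $\|E_{\R \setminus \I(\mu, \delta_n)}(R_0(\varsigma))\psi_n\| \leq a_n/\delta_n \to 0$, so the normalized spectrally localized vectors $\tilde\phi_n := E_{\I(\mu, \delta_n)}(R_0(\varsigma)) \psi_n / \|E_{\I(\mu, \delta_n)}(R_0(\varsigma))\psi_n\|$ satisfy $\tilde\phi_n - \psi_n \to 0$ in norm and $\tilde\phi_n \rightharpoonup 0$. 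Combining these two facts with the compactness of $[R(\varsigma), \i A]_\circ - [R_0(\varsigma), \i A]_\circ$ gives
\[
\langle \tilde\phi_n, [R_0(\varsigma), \i A]_\circ \tilde\phi_n \rangle = \langle \psi_n, [R(\varsigma), \i A]_\circ \psi_n \rangle + o(1) \longrightarrow \varrho^A_{R(\varsigma)}(\mu),
\]
and the Weyl-sequence characterization of $\varrho^A_{R_0(\varsigma)}(\mu)$ supplied by the cited Lemma then forces $\varrho^A_{R_0(\varsigma)}(\mu) \leq \varrho^A_{R(\varsigma)}(\mu)$. The reverse inequality follows by symmetry. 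The main obstacle is precisely this last transfer step: converting a Weyl sequence for $R(\varsigma)$ into one adapted to $R_0(\varsigma)$ while preserving the asymptotic value of the commutator expectation.
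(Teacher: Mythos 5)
Your proof is correct and takes a genuinely different route from the paper's.

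Both proofs share the preliminary reductions: the passage to a real $\varsigma$ in the common resolvent set, the deduction that $R(\varsigma)-R_0(\varsigma)\in\K(\H)$, the compactness of $[R(\varsigma)-R_0(\varsigma),\i A]_\circ$ via Remark~\ref{Remark2comp}, and Proposition~\ref{conjugateResolvent} to move between $T$ and $R(\varsigma)$. Where you diverge is in the transfer step. The paper works directly with the operator inequality: it smooths the Mourre estimate by sandwiching with $\theta(R_0)$, uses the Helffer--Sj\"ostrand formula to see that $\theta(R)-\theta(R_0)$ and $\theta^2(R)-\theta^2(R_0)$ are compact, substitutes $R$ for $R_0$ term by term absorbing compact errors, and then re-sharpens by applying $E_{\mathcal{I}'}(R)$. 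You instead invoke the variational (Weyl-sequence) characterization of $\varrho^A$ given in the unnumbered Lemma just above: you take an optimizing orthonormal sequence $(\psi_n)$ for $R(\varsigma)$, use compactness of $R(\varsigma)-R_0(\varsigma)$ to show it is also an approximate eigensequence for $R_0(\varsigma)$, cut down with the spectral theorem to get vectors $\tilde\phi_n$ that are genuinely spectrally localized for $R_0(\varsigma)$ on intervals shrinking to $\mu$, and then use compactness of the commutator difference to preserve the asymptotic value $\langle\psi_n,[R(\varsigma),\i A]_\circ\psi_n\rangle\to\varrho^A_{R(\varsigma)}(\mu)$. This gives $\varrho^A_{R_0(\varsigma)}(\mu)\leq\varrho^A_{R(\varsigma)}(\mu)$, and symmetry finishes.

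What each buys: the paper's route stays at the level of quadratic-form inequalities and produces the Mourre estimate for $R$ directly, which is arguably more in the spirit of the rest of their argument; your route is more explicitly spectral-theoretic and in fact delivers the stronger conclusion $\varrho^A_T=\varrho^A_{T_0}$ (this is \cite[Theorem 7.2.9]{ABG}, which the authors mention but do not prove). The one place a careless reader might stumble in your write-up is the Chebyshev step: with $\delta_n=\sqrt{a_n}$ one gets $\|E_{\R\setminus\I(\mu,\delta_n)}(R_0(\varsigma))\psi_n\|\leq a_n/\delta_n=\sqrt{a_n}\to 0$ and simultaneously $\delta_n\to 0$, which is exactly what makes the renormalized $\tilde\phi_n$ eligible competitors in the $\liminf$ defining $\varrho^A_{R_0(\varsigma)}(\mu)$; the argument there is fine, just worth spelling out that both $a_n/\delta_n$ and $\delta_n$ tend to zero.
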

\begin{remark}
\textnormal{The assumptions of Theorem \ref{Main} fulfill the requirements of this Lemma, with $(T_0,T)=(H_0,H)$. Indeed, $\D(\langle H \rangle ^{1/2}) = \D(\langle H_0 \rangle ^{1/2})$ implies the compactness of $R(\i) - R_0(\i)$: 
\begin{equation*}
R(\i) - R_0(\i) = R(\i) V R_0(\i) = \underbrace{R(\i) \langle H \rangle ^{1/2}}_{\in \B(\H)} \underbrace{\langle H \rangle ^{-1/2} \langle H_0 \rangle ^{1/2}}_{\in \B(\H)} \underbrace{\langle H_0 \rangle ^{-1/2} V \langle H_0 \rangle ^{-1/2}}_{\in \K(\H) \ \text{by} \ \ref{item:A5}} \underbrace{\langle H_0 \rangle ^{1/2} R_0(\i)}_{\in \B(\H)}.
\end{equation*}
}
\end{remark}
\begin{proof}
Firstly, the assumption that $R(\i) - R_0(\i)$ is compact implies $\sigma_{\text{ess}}(T_0) = \sigma_{\text{ess}}(T)$. Because $T_0$ has a spectral gap, $\sigma_{\text{ess}}(T_0) = \sigma_{\text{ess}}(T) \neq \R$, and therefore there exists $\varsigma \in \R \setminus (\sigma(T) \cup \sigma(T_0))$. For all $z,z' \in \R \setminus (\sigma(T) \cup \sigma(T_0))$, the following identity holds: 
\begin{equation*} 
R(z) - R_0(z) = [I +(z' -z)R(z)][R(z')-R_0(z')][I+(z'-z)R_0(z)].
\end{equation*}
Thus $R(\varsigma) - R_0(\varsigma)$ is compact.  To simplify the notation onwards, let $R_0 := R_0(\varsigma)$ and $R := R(\varsigma)$. 

Secondly, if $\lambda \in \mu^A(T_0)$, then $(\varsigma-\lambda)^{-1} \in \mu^A(R_0)$ by Proposition \ref{conjugateResolvent}, and so there is an open interval $\mathcal{I} \ni (\varsigma-\lambda)^{-1}$, $c>0$ and a compact $K$ such that
\begin{equation*}
E_{\mathcal{I}}(R_0) [R_0, \i A]_{\circ} E_{\mathcal{I}}(R_0) \geqslant c E_{\mathcal{I}}(R_0) + K.
\end{equation*}
Applying to the right and left by $\theta(R_0)$, where $\theta \in \CC_c^{\infty}(\R)$ is a bump function supported and equal to one in a neighborhood of $(\varsigma-\lambda)^{-1}$, we get
\begin{equation*}
\label{MourreEst1}
\theta(R_0) [R_0, \i A]_{\circ}\theta(R_0) \geqslant c \theta^2(R_0) + \text{compact}.
\end{equation*}
By the Helffer-Sj\"otrand formula and the fact that $R(z)-R_0(z)$ is compact for all $z \in \C \setminus \R$, we see that $\theta(R)-\theta(R_0)$ is compact, and likewise for $\theta^2(R)-\theta^2(R_0)$. Note also that $R_0 - R \in \CC^{1,\rm u}(A)$ and so by Remark \ref{Remark2comp}, $[R_0 -R, \i A]_{\circ} \in \K(\H)$. Thus exchanging $R_0$ for $R$, $\theta(R_0)$ for $\theta(R)$, and $\theta^2(R_0)$ for $\theta^2(R)$ in the previous inequality, we have
\begin{equation*}
\theta(R) [R, \i A]_{\circ}\theta(R) \geqslant c \theta^2(R) + \text{compact}.
\end{equation*}
Let $\mathcal{I}' \subset \theta^{-1}(\{1\})$. Applying $E_{\mathcal{I}'}(R)$ to the left and right of this equation shows that the Mourre estimate holds for $R$ in a neighborhood of $(\varsigma-\lambda)^{-1}$. Thus $\lambda \in \mu^A(T)$ by Proposition \ref{conjugateResolvent}, and this shows $\mu^A(T_0) \subset \mu^A(T)$. Exchanging the roles of $T$ and $T_0$ shows the reverse inclusion.
\qed
\end{proof}

\section{Examples of Schr\"odinger operators}
\label{Section:Examples}

\subsection{The case of continuous Schr\"odinger operators}
\label{ex:3}
Our first application is to continuous Schr\"odinger operators. The setting has already been described in Example \ref{ex:1} for the most part. For an integer $d \geqslant 1$, we consider the Hilbert space $\H := L^2(\R^d)$.
The free operator is the Laplacian $H_0 := - \Delta = - \sum _{i=1} ^d \partial^2 / \partial x_i^2$ with domain the Sobolev space $\H^2 := \H^2(\R^d)$.
Then $H_0$ is a positive operator with purely absolutely continuous spectrum and $\sigma(H_0) = [0,+\infty)$. Let $Q$ be the operator of multiplication by $x=(x_1,...,x_d) \in \R^d$, and let $P:= -\i \grad$. Set
\[H:=H_0+V_{\rm sr}(Q)+V_{\rm lr}(Q),\]
where $V_{\rm sr}(x)$ and $V_{\rm   lr}(x)$ are real-valued functions belonging to $L^{\infty}(\R^d)$, satisfying $V_{\rm   sr}(x)$, $V_{\rm   lr}(x) = o(1)$ at infinity. Then $V_{\rm   sr}(Q)$ and $V_{\rm   lr}(Q)$ are bounded self-adjoint operators in $\H$ and $H_0$-form relatively compact operators, i.e.\ $V_{\rm   sr}(Q), V_{\rm   lr}(Q) \in \K(\H^1,\H^{-1})$, where $\H^1$ denotes the form domain of $H_0$ (i.e., the Sobolev space $\H^1(\R^d)$). The latter is a direct consequence of the following standard fact:
\begin{proposition}
\label{KnownFourierDecay}
Let $f,g$ be bounded Borel measurable functions on $\R^d$ which vanish at infinity. Then $g(Q)f(P) \in \K(L^2(\R^d))$.
\end{proposition}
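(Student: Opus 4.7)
The plan is to first establish the statement on a subclass of functions where an explicit integral-kernel computation is available, and then to extend to the general case by a norm approximation argument.

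First I would treat the Hilbert--Schmidt case. Suppose $f, g \in L^2(\R^d) \cap L^{\infty}(\R^d)$. Writing $f(P) = \mathcal{F}^{-1} M_f \mathcal{F}$, where $\mathcal{F}$ is the Fourier transform and $M_f$ is multiplication by $f$, a direct application of Fubini shows that $g(Q) f(P)$ is an integral operator on $L^2(\R^d)$ with kernel $K(x,y) = (2\pi)^{-d/2} g(x)\, \check{f}(x-y)$, where $\check{f} := \mathcal{F}^{-1} f \in L^2(\R^d)$. Plancherel then yields
\[
\iint_{\R^{2d}} |K(x,y)|^2 \, dx\, dy = (2\pi)^{-d} \|g\|_2^2 \, \|\check{f}\|_2^2 = (2\pi)^{-d} \|g\|_2^2 \, \|f\|_2^2 < \infty,
\]
so $g(Q) f(P)$ is Hilbert--Schmidt, and in particular belongs to $\K(L^2(\R^d))$.

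Next I would pass to the general case by truncation. For $n \in \N$, let $B_n := \{x \in \R^d : |x| \leqslant n\}$ and set $f_n := \1_{B_n} f$ and $g_n := \1_{B_n} g$. These are bounded Borel functions with compact support, hence in $L^2 \cap L^{\infty}$, so each $g_n(Q) f_n(P)$ is compact by the first step. Since $f$ and $g$ vanish at infinity, $\|f - f_n\|_{\infty} \to 0$ and $\|g - g_n\|_{\infty} \to 0$. The Borel functional calculus gives $\|f(P) - f_n(P)\| = \|f - f_n\|_{\infty}$ and likewise for $g$, so the triangle inequality
\[
\|g(Q) f(P) - g_n(Q) f_n(P)\| \leqslant \|g - g_n\|_{\infty} \|f\|_{\infty} + \|g_n\|_{\infty} \|f - f_n\|_{\infty}
\]
shows that $g_n(Q) f_n(P) \to g(Q) f(P)$ in operator norm. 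As $\K(L^2(\R^d))$ is closed in $\B(L^2(\R^d))$ for the norm topology, we conclude $g(Q) f(P) \in \K(L^2(\R^d))$.

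The only slightly delicate step is the kernel computation establishing the Hilbert--Schmidt bound in the first step, but it reduces to Plancherel and Fubini and presents no serious obstacle; the truncation argument in the second step is standard.
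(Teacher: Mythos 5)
Your proof is correct. The paper does not actually give a proof of Proposition \ref{KnownFourierDecay}---it is simply quoted as a ``standard fact''---so there is no in-paper argument to compare against. Your two-step scheme (Hilbert--Schmidt via a kernel computation and Plancherel when $f,g\in L^2(\R^d)\cap L^{\infty}(\R^d)$, then operator-norm approximation by truncations, using that $\K(L^2(\R^d))$ is norm-closed) is the standard textbook proof and is sound as written; the only nitpick is that the functional-calculus bound $\|f(P)-f_n(P)\|\leqslant\|f-f_n\|_{\infty}$ is what is actually needed and used, the equality being an essential-sup statement that plays no further role.
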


Assumptions \ref{item:A1} - \ref{item:A5} are verified. We add that $\sigma_{\rm ess}(H)=[0,+\infty)$ by the Theorem of Weyl on relative compactness. 
Moving forward, we use the following results:
\begin{proposition}\cite[p.\ 258]{ABG}
\label{c1invariant}
Let $T$ and $A$ be self-adjoint operators in a Hilbert space $\mathscr{H}$ and denote $\mathscr{H}^1 := \D(\langle T \rangle^{1/2})$, the form domain of $T$, and $\mathscr{H}^{-1} := (\mathscr{H}^1)^*$. Suppose that $e^{\i tA} \mathscr{H}^1 \subset \mathscr{H}^1$. Then the following are equivalent:
\begin{enumerate}
\item $T \in \CC^1(A; \mathscr{H}^1,\mathscr{H}^{-1})$
\item The form $[T, \i A]$ defined on $\D(T) \cap \D(A)$ extends to an operator in $\B(\mathscr{H}^1, \mathscr{H}^{-1})$. 
\end{enumerate}
In this case, the derivative of the map $t \mapsto e^{-\i t A} T e^{\i t A}$ at $t=0$, which is denoted $[T, \i A]_{\circ}$ (see Section \ref{RegularityClasses}), coincides precisely with the bounded extension of the form $[T, \i A]$.
\end{proposition}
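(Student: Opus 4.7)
The plan is to verify both implications by working with the map $T(t) := e^{-\i t A} T e^{\i t A} \in \B(\mathscr{H}^1, \mathscr{H}^{-1})$. The stabilization $e^{\i tA}\mathscr{H}^1 \subset \mathscr{H}^1$ together with uniform boundedness of $\{e^{\i tA}\}$ on $\mathscr{H}^1$ (an easy consequence, via closed graph) guarantees that $T(t)$ is well-defined and that $\{e^{\i tA}\}$ is a $\CC_0$-group on both $\mathscr{H}^1$ and, by duality, on $\mathscr{H}^{-1}$.

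For (1) $\Rightarrow$ (2), the strategy is simply the product rule. Under (1), the derivative $T'(0) =: [T, \i A]_{\circ} \in \B(\mathscr{H}^1, \mathscr{H}^{-1})$ exists in the strong operator topology. For $\phi, \psi \in \D(T) \cap \D(A)$, the $\mathscr{H}^1$--$\mathscr{H}^{-1}$ duality pairing coincides with the ordinary $\H$ inner product, so
\[
\langle \phi, T'(0) \psi \rangle = \lim_{t \to 0} \frac{1}{t}\bigl(\langle e^{\i tA}\phi, T e^{\i tA}\psi\rangle - \langle \phi, T\psi\rangle\bigr) = \langle \i A\phi, T\psi\rangle + \langle T\phi, \i A\psi\rangle,
\]
where self-adjointness of $T$ handles the second summand. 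The right-hand side is precisely the formal commutator form $\langle \phi, [T, \i A] \psi\rangle$, so $[T, \i A]_{\circ}$ is the sought bounded extension.

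For the harder direction (2) $\Rightarrow$ (1), let $B \in \B(\mathscr{H}^1, \mathscr{H}^{-1})$ denote the bounded extension of $[T, \i A]$. I would aim for the integral identity
\[
T(t) - T(0) = \int_0 ^t e^{-\i s A} B e^{\i s A} \, ds
\]
interpreted as a strong integral in $\B(\mathscr{H}^1, \mathscr{H}^{-1})$; the integrand is strongly continuous because $e^{\i sA}$ is strongly continuous on both $\mathscr{H}^1$ and $\mathscr{H}^{-1}$, so the identity will yield $\CC^1$ regularity of $T(\cdot)$ with derivative $t \mapsto e^{-\i tA} B e^{\i tA}$. To establish the identity, test against $\phi, \psi$ in a suitable dense subset: if $e^{\i s A}\phi, e^{\i s A}\psi \in \D(T) \cap \D(A)$ for all $s$, then by the computation above applied at each $s$, $\frac{d}{ds}\langle e^{\i sA}\phi, T e^{\i sA}\psi\rangle = \langle e^{\i sA}\phi, B e^{\i sA}\psi\rangle$, which integrates to the claim; extension to all $\phi, \psi \in \mathscr{H}^1$ follows from boundedness of $B, T(t) \in \B(\mathscr{H}^1, \mathscr{H}^{-1})$ uniformly on compact time intervals.

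The main obstacle is justifying that the pointwise derivative above can be computed on a dense enough set. The invariance $e^{\i sA}\D(A) \subset \D(A)$ is automatic, but $\D(T)$ is not preserved by $e^{\i sA}$ in general, so one cannot simply iterate. The standard remedy is regularization: starting from $\phi \in \mathscr{H}^1$, replace it by $A$-mollified vectors $\phi_n := \int \chi_n(s) e^{\i s A}\phi \, ds$, which belong to $\CC^\infty(A) \subset \D(A)$ and converge to $\phi$ in $\mathscr{H}^1$; combined with a similar $T$-cutoff (via $f_R(T)$) one obtains an approximating sequence in $\D(T) \cap \D(A)$ on which the product-rule computation is legal. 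Passing to the limit in the integrated identity is routine thanks to the bounded-extension hypothesis and strong continuity of the conjugation group, and it upgrades pointwise differentiability to genuine $\CC^1$ regularity in $\B(\mathscr{H}^1, \mathscr{H}^{-1})$ with strong operator topology, completing the equivalence.
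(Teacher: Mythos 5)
The paper does not prove this statement: it is quoted verbatim from ABG p.\ 258, so there is no internal argument to compare yours against. Judged on its own, your sketch contains a genuine gap in both directions, and it is the same gap: the Leibniz-type computation of $\frac{d}{dt}\big|_{t=0}\,\mathfrak t\big(e^{\i tA}\phi,\,e^{\i tA}\psi\big)$ is not justified in this topology. Telescoping gives
\[
\mathfrak t\big(e^{\i tA}\phi,e^{\i tA}\psi\big)-\mathfrak t(\phi,\psi)
=\langle e^{\i tA}\phi-\phi,\,T\psi\rangle+\langle T\phi,\,e^{\i tA}\psi-\psi\rangle
+\mathfrak t\big(e^{\i tA}\phi-\phi,\,e^{\i tA}\psi-\psi\big),
\]
and the first two terms are $t\langle \i A\phi,T\psi\rangle + t\langle T\phi,\i A\psi\rangle + o(t)$ because $\phi,\psi\in\D(T)\cap\D(A)$ lets you move $T$ to the $\H$-side. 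But the cross term is the problem. The form $\mathfrak t$ is bounded only on $\mathscr H^1\times\mathscr H^1$, and $\phi,\psi\in\D(A)$ only gives $\|e^{\i tA}\phi-\phi\|_{\H}=O(t)$, \emph{not} $\|e^{\i tA}\phi-\phi\|_{\mathscr H^1}=O(t)$; the $\mathscr H^1$-norm of the increment is merely $o(1)$. The best estimate from these ingredients is therefore $o(1)\cdot o(1)$, which is $o(1)$ but not $o(t)$, so you cannot conclude that the cross term is negligible after dividing by $t$. (For bounded $T$ the $O(t^2)$ estimate closes the argument; here $T$ is only form-bounded and the same estimate is unavailable.) You would need either $\phi$ in the domain of the generator of $e^{\i tA}$ acting on $\mathscr H^1$ — which is a strictly smaller set than $\D(A)$ — or a different route entirely, such as reducing to the bounded case via the resolvent $(z-T)^{-1}$ and the formula $[(z-T)^{-1},\i A]_\circ=(z-T)^{-1}[T,\i A]_\circ(z-T)^{-1}$, which is essentially how ABG organize the proof.

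A secondary issue: in (2) $\Rightarrow$ (1) you invoke "the computation above applied at each $s$", but that computation was carried out \emph{assuming} (1), which is precisely what you are trying to prove; you would have to re-derive the pointwise identity from scratch under hypothesis (2), and it has the same Leibniz obstruction. Also, your regularization plan is under-specified: the $A$-mollified vectors $\phi_n$ need not lie in $\D(T)$, while the $T$-cutoff $f_R(T)$ need not preserve $\D(A)$ when one does not yet know $T\in\CC^1(A)$, so placing vectors in $\D(T)\cap\D(A)$ is not as routine as stated. Finally, note a small misstatement near the start: for $\psi\in\D(T)\cap\D(A)$, $T'(0)\psi$ lies a priori only in $\mathscr H^{-1}$, so $\langle\phi,T'(0)\psi\rangle$ is a genuine duality pairing, not an $\H$ inner product.
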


\begin{remark}
The form $[T, \i A]$ is defined for $\psi, \phi \in \D(T) \cap \D(A)$ as follows :
\[ \langle \psi, [T, \i A] \phi \rangle := \langle T^* \psi, \i A \phi \rangle - \langle A^* \psi, \i T \phi \rangle = \langle T \psi, \i A \phi \rangle - \langle A \psi, \i T \phi \rangle.\]
The last equality holds because $T$ and $A$ are assumed to be self-adjoint.
\end{remark}

\begin{proposition}\cite[p.\ 258]{ABG}
\label{c2invariant}
Under the same assumptions as that of Proposition \ref{c1invariant}, the following are equivalent:
\begin{enumerate}
\item $T \in \CC^2(A; \mathscr{H}^1,\mathscr{H}^{-1})$
\item The forms $[T,\i A]$ and $[[T, \i A]_{\circ}, \i A]$ defined on $\D(T) \cap \D(A)$ extend to operators \newline in $\B(\mathscr{H}^1, \mathscr{H}^{-1})$. 
\end{enumerate}
\end{proposition}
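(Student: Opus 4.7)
The plan is to iterate Proposition \ref{c1invariant}. By definition, $T \in \CC^2(A; \mathscr{H}^1, \mathscr{H}^{-1})$ means that the orbit map $\Phi(t) := e^{-\i tA} T e^{\i tA}$ is of class $\CC^2(\R; \B(\mathscr{H}^1, \mathscr{H}^{-1}))$ with the strong operator topology on the target. By standard Banach-space calculus, this is equivalent to $\Phi$ being of class $\CC^1$ \emph{and} its derivative $\Phi'$ being itself of class $\CC^1$.

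First I would apply Proposition \ref{c1invariant} directly to $T$: this yields that $\Phi \in \CC^1$ iff the form $[T, \i A]$ on $\mathcal{D}(T) \cap \mathcal{D}(A)$ extends to an element $[T, \i A]_\circ \in \B(\mathscr{H}^1, \mathscr{H}^{-1})$, and in that case $\Phi'(t) = e^{-\i tA} [T, \i A]_\circ e^{\i tA}$. Thus, once the first form extends, $\Phi'$ is exactly the orbit of the bounded element $S := [T, \i A]_\circ$ of $\B(\mathscr{H}^1, \mathscr{H}^{-1})$ under the adjoint action of $\{e^{\i tA}\}$, and the requirement $\Phi' \in \CC^1$ becomes the requirement $S \in \CC^1(A; \mathscr{H}^1, \mathscr{H}^{-1})$.

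The second step is to apply an analogue of Proposition \ref{c1invariant} to the symmetric element $S$ of $\B(\mathscr{H}^1, \mathscr{H}^{-1})$. That proposition is stated for a self-adjoint operator on $\mathscr{H}$, but its proof uses self-adjointness only to give meaning to $[T, \i A]$ as a sesquilinear form on $\mathcal{D}(T) \cap \mathcal{D}(A)$; the functional-analytic heart --- a Duhamel identity $\Phi(t) - \Phi(0) = \int_0^t e^{-\i sA} [T,\i A]_\circ e^{\i sA}\,ds$ together with the invariance $e^{\i tA}\mathscr{H}^1 \subset \mathscr{H}^1$ --- goes through verbatim for any symmetric element of $\B(\mathscr{H}^1, \mathscr{H}^{-1})$. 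Applied to $S$, it gives $S \in \CC^1(A; \mathscr{H}^1, \mathscr{H}^{-1})$ iff the form $[S, \i A] = [[T, \i A]_\circ, \i A]$ extends to an operator in $\B(\mathscr{H}^1, \mathscr{H}^{-1})$. Chaining the two equivalences yields (1) $\Longleftrightarrow$ (2).

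The main subtlety I would need to handle is giving rigorous meaning to the iterated form on $\mathcal{D}(T) \cap \mathcal{D}(A)$: for $\phi, \psi$ in this subspace there is no a priori reason that $A\phi$ should lie in $\mathscr{H}^1$, so the natural expression for $\langle \psi, [[T,\i A]_\circ, \i A] \phi\rangle$ is not obviously well-defined on its own. My remedy would be to regularize $A$ by the bounded operators $A_\epsilon := A(1 + \epsilon A^2)^{-1}$, which stabilize $\mathscr{H}^1$ thanks to \ref{item:A3}, write the iterated-commutator form with $A_\epsilon$ in place of $A$ (where every pairing automatically makes sense in the duality $\mathscr{H}^1, \mathscr{H}^{-1}$), and then pass to the limit $\epsilon \to 0$ using the uniqueness of the $\B(\mathscr{H}^1, \mathscr{H}^{-1})$-extension of a form defined on a dense subspace.
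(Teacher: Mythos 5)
The paper does not prove this result; both Proposition~\ref{c1invariant} and Proposition~\ref{c2invariant} are quoted from \cite[p.~258]{ABG} as black boxes, so there is no internal proof to compare against. Your bootstrap --- unwrap one order of regularity with Proposition~\ref{c1invariant}, recognize $\Phi'$ as the adjoint orbit of the symmetric element $S := [T,\i A]_\circ \in \B(\mathscr{H}^1,\mathscr{H}^{-1})$, then reapply the $\CC^1$ characterization to $S$ --- is the standard argument for such statements and is correct, and your use of the bounded regularization $A_\epsilon := A(1+\epsilon A^2)^{-1}$ (which stabilizes $\mathscr{H}^1$ because \ref{item:A3} forces the resolvents $(z-A)^{-1}$, $\im z\neq 0$, to stabilize $\mathscr{H}^1$, by $C_0$-group theory) is a legitimate way to give rigorous meaning to the iterated form. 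One small imprecision: the final limiting step is not a matter of ``uniqueness of the extension'' but of establishing the weak convergence $\langle\psi, [S,\i A_\epsilon]\phi\rangle \to \langle\psi, [S,\i A]_\circ\phi\rangle$ on a dense subspace of $\mathscr{H}^1$; once this and a uniform bound on $\|[S,\i A_\epsilon]\|_{\B(\mathscr{H}^1,\mathscr{H}^{-1})}$ are in hand, density closes the gap.
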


Let $A:= (Q\cdot P + P \cdot Q)/2$ be the generator of dilations which is essentially self-adjoint on the Schwartz space $\mathcal{S}(\R^d)$. The relation 
\begin{equation*}
(e^{\i tA} \psi) (x) = e^{td/2} \psi(e^t x), \quad \text{for all} \ \psi \in L^2(\R^d), x \in \R^d
\end{equation*}
implies that $\{e^{\i tA}\}_{t \in \R}$ stabilizes $\H^2(\R^d)$, and thus $\H^{\theta}(\R^d)$ for all $\theta \in [-2,2]$ by duality and interpolation. Thus \ref{item:A3} holds.
A straightforward computation gives
\begin{equation}
\label{commutatorH0}
\langle \psi, [H_0, \i A] \phi \rangle = \langle \psi, 2H_0 \phi \rangle 
\end{equation}
for all $\psi,\phi \in \D(H_0) \cap \D(A)$. We see that $[H_0,\i A]$ extends to an operator in $\B(\H^1,\H^{-1})$, thereby implying that $H_0 \in \CC^1(A;\H^1,\H^{-1})$ by Proposition \ref{c1invariant}. The strict Mourre estimate holds for $H_0$ with respect to $A$ on all intervals $\I$ verifying $\overline{\I} \subset (0,+\infty)$. In particular, $\mu^A(H_0) = (0, +\infty)$. 

From \eqref{commutatorH0} it is immediate that
\begin{equation*}
\langle \psi, [[H_0, \i A]_{\circ}, \i A] \phi \rangle = \langle \psi, 4 H_0 \phi \rangle
\end{equation*}
holds for all $\psi,\phi \in \D(H_0) \cap \D(A)$. In particular, $[[H_0, \i A]_{\circ}, \i A]$ extends to an operator in $\B(\H^1,\H^{-1})$. Applying Proposition \ref{c2invariant} gives $H_0 \in \CC^2(A;\H^1,\H^{-1})$ and \ref{item:A2} is fulfilled. 

We now examine the commutator between the the full Hamiltonian $H := H_0 + V_{\rm lr}(Q)+V_{\rm sr}(Q)$ and $A$. Since $V_{\rm lr}(x)$ and $V_{\rm sr}(x)$ are assumed to be real-valued bounded functions, $\D(H) = \D(H_0)$.  So we consider the form 
\begin{equation}
\label{commutatorHH}
\langle \psi, [H, \i A] \phi \rangle = \langle \psi, [H_0, \i A] \phi \rangle + \langle \psi, [V_{\rm lr}(Q), \i A] \phi \rangle + \langle \psi, [V_{\rm sr}(Q), \i A] \phi \rangle
\end{equation}
defined for all $\psi,\phi \in \D(H_0) \cap \D(A)$. By linearity, we may treat each commutator form separately and the one with $[H_0, \i A]$ has already been dealt with. 

For the long-range potential, we now additionally assume that $x \cdot \grad V_{\rm lr}(x)$ exists as a function and belongs to $L^{\infty}(\R^d)$. A computation gives 
\[ \langle \psi, [V_{\rm lr}(Q), \i A] \phi \rangle = - \langle \psi, Q \cdot \grad V_{\rm lr}(Q) \phi \rangle, \]
for all $\psi, \phi \in \D(H_0) \cap \D(A)$. This shows that $[V_{\rm lr}(Q), \i A]$ extends to an operator in $\B(\H^1, \H^{-1})$. 

For the short range potential, we now additionally assume that $\langle x \rangle V_{\rm sr}(x)$ belongs to $L^{\infty}(\R^d)$. For $\psi, \phi \in \D(H_0) \cap \D(A)$, we have
\begin{align*}
\langle \psi, [ V_{\rm sr}(Q), \i A ] \phi \rangle &= \langle V_{\rm sr}(Q) \psi, \i A \phi \rangle + \langle \i A \psi, V_{\rm sr}(Q)  \phi \rangle \\
&= \langle \langle Q \rangle V_{\rm sr}(Q)  \psi, \langle Q \rangle ^{-1} (\i Q \cdot P + d/2) \phi \rangle + \langle  \langle Q \rangle ^{-1} (\i Q \cdot P + d/2) \psi, \langle Q \rangle V_{\rm sr}(Q)  \phi \rangle.
\end{align*}
We handle the operator in the first inner product on the r.h.s.\ of the previous equation. Note that $\langle Q \rangle ^{-1} (\i Q \cdot P + d/2) \in \B(\H^1, \H)$ and $\langle Q \rangle V_{\rm sr}(Q) \in \B(\H, \H^{-1})$. Thus 
\begin{equation}
\label{firstoperator}
\langle Q \rangle V_{\rm sr}(Q) \times \langle Q \rangle ^{-1} (\i Q \cdot P + d/2)
\end{equation}
belongs to $\B(\H^1, \H^{-1})$. The operator in the second inner product on the r.h.s.\ of the previous equation also belongs to $\B(\H^1, \H^{-1})$, because it is the adjoint of \eqref{firstoperator}.  We conclude that $[V_{\rm sr}(Q), \i A]$ extends to an operator in $\B(\H^1, \H^{-1})$. 

Putting everything together, we see that the form $[H, \i A]$ given in \eqref{commutatorHH} extends to an operator in $\B(\H^1, \H^{-1})$. Applying Proposition \ref{c1invariant} gives $H \in \CC^1(A; \H^1, \H^{-1})$. Writing $V_{\rm lr}(Q) + V_{\rm sr}(Q) = H - H_0$, we see that both $V_{\rm lr}(Q)$ and $V_{\rm sr}(Q)$ belong to $\CC^1(A; \H^1, \H^{-1})$. Of course we note that the bounded extensions of $[V_{\rm lr}(Q), \i A]$ and $[V_{\rm sr}(Q), \i A]$ are precisely $[V_{\rm lr}(Q), \i A]_{\circ}$ and $[V_{\rm sr}(Q), \i A]_{\circ}$ respectively.

If $x \cdot \grad V_{\rm lr}(x) = o(1)$  at infinity is further assumed, then $\langle H_0 \rangle ^{-1/2} Q \cdot  \grad V_{\rm lr}(Q) \langle H_0 \rangle ^{-1/2}$ belongs to $\K(L^2(\R^d))$ by Proposition \ref{KnownFourierDecay}. In other words, $[V_{\rm lr}(Q), \i A]_{\circ}  =  -Q \cdot  \grad V_{\rm lr}(Q) \in \K(\H^1, \H^{-1})$. So under this additional assumption, $V_{\rm lr}(Q) \in \CC^{1,\rm{u}}(A; \H^1, \H^{-1})$ by Proposition \ref{PropC1UU}.

If $\langle x \rangle V_{\rm sr}(x) = o(1)$  at infinity is further assumed, then $\langle H_0 \rangle ^{-1/2} \langle Q \rangle V_{\rm sr}(Q)$ belongs to $\K(L^2(\R^d))$ by Proposition \ref{KnownFourierDecay}. In other words $\langle Q \rangle V_{\rm sr}(Q) \in \K(\H, \H^{-1})$. So \eqref{firstoperator} and its adjoint belong to $\K(\H^1,\H^{-1})$. Thus $[V_{\rm lr}(Q), \i A]_{\circ} \in \K(\H^1, \H^{-1})$. Applying Proposition \ref{PropC1UU} gives $V_{\rm sr}(Q) \in \CC^{1,\rm{u}}(A; \H^1, \H^{-1})$. Assumption \ref{item:A6} is fulfilled. 

The above calculations along with Theorems \ref{Main2} and \ref{Main} yield the following specific result for continuous Schr\"odinger operators:
\begin{theorem}
\label{ContSchrodingerThm}
Let $\H =  L^2(\R^d)$, $H:=H_0+V_{\rm sr}(Q)+V_{\rm lr}(Q)$ and $A$ be as above, namely
\begin{enumerate}
\item $H_0 =-\Delta$ and $A = (Q \cdot P + P \cdot Q)/2$,
\item $V_{\rm sr}(x)$ and $V_{\rm lr}(x)$ are real-valued functions in $L^{\infty}(\R^d)$,
\item $\lim V_{\rm sr}(x) = \lim V_{\rm lr}(x) =0$ as $\| x\| \to +\infty$,
\item \label{Assumption4} $\lim \langle x \rangle V_{\rm sr}(x) = 0$ as $\| x\| \to +\infty$, and
\item \label{Assumption5} $x\cdot \grad V_{\rm lr}(x)$ exists as a function, belongs to $L^{\infty}(\R^d)$, and $\lim x\cdot \grad V_{\rm lr}(x) = 0$ as $\| x\| \to +\infty$.
\end{enumerate}
Then for all $\lambda \in (0,+\infty)$ there is a bounded open interval $\I$ containing $\lambda$ such that for all $s>0$ and $\psi \in \H$, propagation estimates \eqref{NewFormula3} and \eqref{NewFormula4} hold, and for all $s>1/2$, estimate \eqref{NewFormula} holds.
\end{theorem}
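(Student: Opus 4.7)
The plan is straightforward: essentially all the work has been done in the preceding subsection, and the proof reduces to checking that the hypotheses of Theorems \ref{Main2} and \ref{Main} hold for this specific $(H_0,V,A)$, then invoking those theorems. First, I would collect what has already been proved: $H_0$ is semi-bounded self-adjoint with form domain $\H^1(\R^d)$ (giving \ref{item:A1}); $V_{\rm sr}(Q)+V_{\rm lr}(Q)$ is symmetric and belongs to $\K(\H^1,\H^{-1})$ by Proposition \ref{KnownFourierDecay} (giving \ref{item:A4} and \ref{item:A5}); the dilation group preserves $\H^1(\R^d)$ (giving \ref{item:A3}); the explicit identity \eqref{commutatorH0} and its once-iterated analogue together with Proposition \ref{c2invariant} give $H_0 \in \CC^2(A;\H^1,\H^{-1})$ (giving \ref{item:A2}); and the decay conditions (4) and (5) combined with Proposition \ref{KnownFourierDecay} and Proposition \ref{PropC1UU} yield $V \in \CC^{1,\rm u}(A;\H^1,\H^{-1})$ (giving \ref{item:A6}).

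Next I would identify $\mu^A(H)$. From \eqref{commutatorH0} one has $E_{\I}(H_0)[H_0,\i A]_\circ E_{\I}(H_0)=2E_{\I}(H_0)H_0 E_{\I}(H_0)\geq 2(\inf \I)\,E_{\I}(H_0)$ whenever $\inf\I>0$, so $\mu^A(H_0)=(0,+\infty)$. Since both $H$ and $H_0$ belong to $\CC^{1,\rm u}(A;\H^1,\H^{-1}) \subset \CC^{1,\rm u}(A)$, and since the resolvent difference $R(\i)-R_0(\i)$ is compact (as pointed out in the remark following Lemma \ref{Lemma1}), Lemma \ref{Lemma1} applies and gives $\mu^A(H)=(0,+\infty)$.

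The only delicate remaining point, and the one I expect to be the main obstacle, is the hypothesis $\ker(H-\lambda)\subset\D(A)$ for $\lambda$ in a neighborhood of each positive energy. With only $\CC^{1,\rm u}$ regularity the results of \cite{FMS} do not apply directly. My plan is to argue that any $L^2$-eigenfunction $\psi$ of $H$ at positive energy $\lambda$ decays faster than any inverse polynomial, via a Froese-Herbst / Agmon-type exponential decay estimate; this is available under the present decay assumptions on $V_{\rm sr}$ and $V_{\rm lr}$. Since $A=(Q\cdot P+P\cdot Q)/2$, the membership $\psi\in\D(A)$ reduces to $\langle Q\rangle\psi\in\H^1(\R^d)$, and polynomial (or exponential) decay of $\psi$ together with $\psi\in\H^2(\R^d)$ is enough. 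In many concrete subcases the question is moot because classical results (Kato's theorem) rule out positive eigenvalues altogether under the hypotheses in force.

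Once this last ingredient is in place, all the hypotheses of Theorems \ref{Main2} and \ref{Main} are satisfied at every $\lambda\in(0,+\infty)$, so \eqref{NewFormula3}, \eqref{NewFormula4} and \eqref{NewFormula} follow at once. For \eqref{NewFormula4} I would further note that any $H$-relatively compact $W$ is admissible, and the typical examples of interest (such as $\mathbf{1}_\Sigma(Q)$ for $\Sigma\subset\R^d$ compact, or multiplication by functions vanishing at infinity) are $H$-relatively compact by Proposition \ref{KnownFourierDecay}.
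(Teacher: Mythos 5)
Your verification of \ref{item:A1}--\ref{item:A6}, the identification of $\mu^A(H_0)=(0,+\infty)$ via \eqref{commutatorH0}, and the use of Lemma~\ref{Lemma1} to transfer this to $\mu^A(H)$ all match the paper's argument. The one place where you diverge, and overcomplicate matters, is the hypothesis $\ker(H-\lambda)\subset\D(A)$.

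You propose, as your primary route, a Froese--Herbst/Agmon-type decay estimate for positive-energy eigenfunctions, and only mention the absence of positive eigenvalues as something that happens ``in many concrete subcases.'' The paper's observation is that under \emph{exactly} the hypotheses of Theorem~\ref{ContSchrodingerThm} --- conditions~\eqref{Assumption4} and~\eqref{Assumption5}, i.e.\ $\langle x\rangle V_{\rm sr}(x)\to 0$ and $x\cdot\nabla V_{\rm lr}(x)\to 0$ --- the classical results of Kato~\cite{K2}, Simon~\cite{Si} and Agmon~\cite{A} already show that $H$ has \emph{no} eigenvalues in $[0,+\infty)$. Thus $\ker(H-\lambda)=\{0\}$ for every $\lambda\in(0,+\infty)$, the kernel condition is vacuous, and there is nothing to prove; moreover $P_{\rm c}(H)E_\I(H)=E_\I(H)$ in the conclusion. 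This is not a special subcase but the full generality of the theorem. Your primary route (exponential decay) is both unnecessary and, as you yourself flag, potentially problematic: the standard Froese--Herbst machinery is not obviously available under $\CC^{1,\rm u}$ regularity alone, and \cite{FMS} does not apply. So the order of your two arguments should be reversed: the absence of positive eigenvalues is the argument, not the afterthought. With that correction, the proof is complete and coincides with the paper's.
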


\begin{remark}
As discussed above, Assumptions (1) - (5) of Theorem \ref{ContSchrodingerThm} imply that $V_{\rm sr}(Q)$ and $V_{\rm lr}(Q)$ belong to $\CC^{1,\rm{u}}(A; \H^1,\H^{-1})$. In particular $H \in \CC^{1,\rm{u}}(A)$. Moreover, $\mu^A(H) = \mu^A(H_0) = (0,+\infty)$, by Lemma \ref{Lemma1}. 
\end{remark}

\begin{remark}
Notice that the condition $\ker(H-\lambda) \subset \D(A)$ that appears in the formulation of Theorems \ref{Main2} and \ref{Main} is totally absent here. This is because under the assumptions $\lim \langle x \rangle V_{\rm sr}(x) = \lim x\cdot \grad V_{\rm lr}(x) = 0$ as $\| x\| \to +\infty$, it is well-known from research in the sixties that the continuous Schr\"odinger operator $H$ does not have any eigenvalues in $[0,+\infty)$, see articles by Kato \cite{K2}, Simon \cite{Si} and Agmon \cite{A}.
\end{remark}

To end this section, we justify the statements made in the last two rows of Table \ref{tabl1}. We refer the reader to \cite[Theorem 7.6.8]{ABG} for the proof that $H := H_0 + V_{\rm sr}(Q) + V_{\rm lr}(Q)$ belongs to $\CC^{1,1}(A)$ whenever $\langle x \rangle V_{\rm sr}(x)$ and $x \cdot \grad V_{\rm lr}(x)$ belong to $L^{\infty}(\R^d)$ and satisfy $o(\langle x \rangle ^{-\epsilon})$ at infinity for some $\epsilon > 0$. As for the statement concerning the $\CC^{2}(A)$ regularity, it remains to prove that if some arbitrary potential $V(x)$ is a bounded real-valued function with $\langle x \rangle ^2 V(x) \in L^{\infty}(\R^d)$, then $H := H_0 + V(Q)$ belongs to $\CC^{2}(A)$. Specifically, under these assumptions for $V$, we will show that $H$ belongs to $\CC^2(A; \H^1,\H^{-1}) \subset \CC^2(A)$. 

Since $V(x)$ is bounded and real-valued, $\D(H) = \D(H_0)$ and so we consider first the form
\begin{equation*}
\label{commutatorHHH}
\langle \psi, [H, \i A] \phi \rangle = \langle \psi, [H_0, \i A] \phi \rangle + \langle \psi, [V(Q), \i A] \phi \rangle 
\end{equation*}
defined for $\psi,\phi \in \D(H_0) \cap \D(A)$.  The calculations from above imply that $[H, \i A]$ extends to an operator in $\B(\H^1, \H^{-1})$. Second we consider the form 
\begin{equation*}
\label{secondcommutatorHHH}
\langle \psi, [[H, \i A]_{\circ}, \i A] \phi \rangle = \langle \psi, [[H_0, \i A]_{\circ}, \i A] \phi \rangle + \langle \psi, [[V(Q), \i A]_{\circ}, \i A] \phi \rangle 
\end{equation*}
defined for $\psi, \phi \in \D(H_0) \cap \D(A)$. The first commutator form on the r.h.s.\ of this equation simplifies to $\langle \psi, 4 H_0 \phi \rangle$, while the for second commutator form we have :
\begin{align}
\langle \psi, [[V(Q), \i A]_{\circ}, \i A] \phi \rangle &=  \big \langle [V(Q), \i A]_{\circ} ^* \psi, (\i A) \phi \big \rangle + \big \langle (\i A) \psi, [V(Q), \i A]_{\circ} \phi \big\rangle \nonumber \\
&= \big \langle [V(Q), \i A]_{\circ}  \psi, (\i A) \phi \big \rangle + \big \langle (\i A) \psi, [V(Q), \i A]_{\circ} \phi \big\rangle.
\label{commut2}
 \end{align}
Let us first have a look at the second term $\big \langle (\i A) \psi, [V(Q), \i A]_{\circ} \phi \big\rangle$. Since $\langle x \rangle V(x) \in L^{\infty}(\R^d)$, this term is equal to 
$$\big \langle (\i A) \psi, \underbrace{\langle Q \rangle V(Q) \langle Q \rangle ^{-1} (\i Q \cdot P +d/2)}_{ := \ \Delta \ \in \ \B(\H^1,\H^{-1})} \phi \big\rangle + \big\langle (\i A) \psi, \underbrace{(-\i  P \cdot Q +d/2) \langle Q \rangle ^{-1} \langle Q \rangle V(Q)}_{ = \ \Delta^* \ \in \ \B(\H^1,\H^{-1})} \phi \big\rangle.$$
Since $\langle x \rangle ^{2} V(x) \in L^{\infty}(\R^d)$, this is also equal to 
$$\big \langle \langle Q \rangle ^{-1} (\i A) \psi, \underbrace{\langle Q \rangle ^{2} V(Q) \langle Q \rangle ^{-1} (\i Q \cdot P +d/2)}_{ := \ \tilde{\Delta} \ \in \ \B(\H^1,\H^{-1})} \phi \big\rangle +\big \langle (\i A) \psi, \underbrace{(-\i  P \cdot Q +d/2) \langle Q \rangle ^{-2} \langle Q \rangle ^2 V(Q)}_{ = \ \Delta^* \ \in \ \B(\H^1,\H^{-1})} \phi \big\rangle. $$
Let $\mathcal{A}_Q := \langle Q \rangle ^{-1} (\i A) \in \B(\H^1,\H^{-1})$. Using the fact that $[(- \i P\cdot Q +d/2), \langle Q \rangle ^{-1} ] = [-\i P,  \langle Q \rangle ^{-1} ] \cdot Q = \i |Q|^2 \langle Q \rangle ^{-3}$ , we get that $\big \langle (\i A) \psi, [V(Q), \i A]_{\circ} \phi \big\rangle$ is equal to 
$$\big \langle \mathcal{A}_Q \psi, \tilde{\Delta} \phi \big\rangle + \big \langle \mathcal{A}_Q \psi, \underbrace{(-\i  P \cdot Q +d/2) \langle Q \rangle ^{-1} \langle Q \rangle ^2 V(Q)}_{ \in \ \B(\H^1,\H^{-1})} \phi \big\rangle + \big \langle \mathcal{A}_Q \psi, \underbrace{\i  |Q|^2 \langle Q \rangle ^{-1} V(Q)}_{\in \ \B(\H)}\phi \big\rangle. $$
For the first term of \eqref{commut2}, we note that it is equal to $\overline{\big \langle  (\i A) \phi,[V(Q), \i A]_{\circ}  \psi \big \rangle}$. Performing the same calculations as we just did shows that the commutator $[[V(Q), \i A]_{\circ}, \i A]$ extends to a bounded operator in $\B(\H^1,\H^{-1})$. 
Hence $[[H, \i A]_{\circ}, \i A]$ extends to a bounded operator in $\B(\H^1,\H^{-1})$ and by Proposition \ref{c2invariant} this implies that $H \in \CC^2(A ; \H^1, \H^{-1})$.

\subsection{The case of discrete Schr\"odinger operators}
\label{ex:2}
Our second application is to discrete Schr\"odinger operators. For an integer $d \geqslant 1$, let $\H := \ell^2(\Z^d)$
The free operator is the discrete Laplacian, i.e.\ $H_0 := \Delta \in \B(\H)$, where 
\begin{equation}
\label{DeltaDef}
(\Delta \psi)(n) := \sum_{m : \|m-n\|=1} \psi(n) - \psi(m).
\end{equation}
Here we have equipped $\Z^d$ with the following norm: for $n=(n_1,...,n_d)$, $\|n\| := \sum_{i =1}^d |n_i|$. It is well-known that $\Delta$ is a bounded positive operator on $\H$ with purely absolutely continuous spectrum, and $\sigma(\Delta) = \sigma_{\rm{ac}}(\Delta) = [0,4d]$. Let $V$ be a bounded real-valued function on $\Z^d$ such that $V(n) \to 0$ as $\|n\| \to +\infty$. Then $V$ induces a bounded self-adjoint compact operator on $\H$ as follows, $(V\psi)(n) := V(n) \psi(n)$. Recall that a multiplication operator $V$ on $\ell^2(\Z^d)$ is compact if and only if $V(n) \to 0$ as $\|n\| \to +\infty$. Assumptions \ref{item:A1} - \ref{item:A5} are verified. Set $H := H_0 + V$. Then $H$ is a bounded self-adjoint operator and $\sigma_{\text{ess}}(H) = [0,4d]$. 

To write the conjugate operator, we need more notation. Let $S=(S_1,...,S_d)$, where, for $1 \leqslant i \leqslant d$, $S_i$ is the shift operator given by 
\begin{equation*}
(S_i \psi)(n) := \psi(n_1,...,n_i -1,...,n_d), \quad \text{for all} \ n \in \Z^d \ \text{and} \ \psi \in \H. 
\end{equation*} 
Let $N=(N_1,...,N_d)$, where, for $1 \leqslant i \leqslant d$, $N_i$ is the position operator given by 
\begin{equation*}
(N_i \psi)(n) := n_i \psi(n), \quad \text{with domain} \quad \D(N_i) := \bigg \{ \psi \in \H : \sum_{n \in \Z^d} |n_i \psi(n)|^2 < +\infty \bigg \}. 
\end{equation*} 
The conjugate operator, denoted by $A$, is the closure of the following operator
\begin{equation}
\label{Adef}
A_0 := \frac{\i}{2} \sum _{i=1}^d (S_i -S_i^*)N_i + N_i (S_i -S_i^*), \quad \text{with domain} \quad \D(A_0) := \ell_0(\Z^d),
\end{equation} 
the sequences with compact support. The operator $A$ is self-adjoint, see \cite{BS} and \cite{GGo}. That $\{e^{\i tA}\}_{t \in \R}$ stabilizes the form domain of $H_0$ is a triviality, because $\D(H_0) = \H$. So Assumption \ref{item:A3} is true. 

Next, we study the commutator between $H_0$ and $A$. A calculation shows that 
\begin{equation}
\label{CommutatorLaplacian}
\langle \psi, [H_0, \i A] \psi \rangle = \langle \psi, \sum_{i=1}^d \Delta_i (4- \Delta_i) \psi \rangle,
\end{equation}
for all $\psi \in \ell_0(\Z^d)$. Here $\Delta_i := 2-S_i-S_i^*$. Since $H_0$ is a bounded self-adjoint operator, \eqref{CommutatorLaplacian} implies that $H_0 \in \CC^1(A)$, thanks to a simple criterion for such operators, see \cite[Lemma 6.2.9]{ABG} and \cite[Theorem 6.2.10]{ABG}. We could also have invoked Proposition \ref{c1invariant}, but that is a bit of an overkill. An easy induction shows that $H_0 \in \CC^k(A)$ for all $k \in \N$. In particular, Assumption \ref{item:A2} holds.
By \eqref{CommutatorLaplacian} and \cite[Theorem 8.3.6]{ABG}, we have that 
\begin{equation}
\label{SetDelta}
\mu^A(H_0) = [0,4d] \setminus \{4k : k=0,...,d\}.
\end{equation}

Let us now study the commutator between $V$ and $A$. Let $\tau_i V$ be the shifted potential acting as follows:
\begin{equation*}
[(\tau_i V)\psi](n) := V(n_1,...,n_i -1,...,n_d) \psi(n), \quad \text{for all} \ \psi \in \H.
\end{equation*}
Define $\tau_i^*V$ correspondingly. A straightforward computation gives
\begin{equation*}
\langle \psi, [V, \i A ] \psi \rangle = \sum_{i=1}^d \Big \langle \psi, \Big( (2^{-1}+N_i)(V-\tau_i ^*V)S_i^* + (2^{-1}-N_i)(V-\tau_i V)S_i \Big) \psi \Big \rangle, 
\end{equation*}
for all $\psi \in \ell_0(\Z^d)$. If $\sup_{n \in \Z^d} | n_i(V-\tau_i V)(n)| <+\infty$ is assumed for all $1 \leqslant i \leqslant d$, we see that $V \in \CC^1(A)$. The bounded extension of the form $[V, \i A]$ is precisely $[V, \i A]_{\circ}$. If $\lim | n_i(V-\tau_iV)(n)| = 0$ as $\|n\| \to +\infty$ for all $1\leqslant i \leqslant d$ is further assumed, then $[V,\i A]_{\circ} \in \K(\H)$. This is equivalent to $V \in \CC^{1,\text{u}}(A)$, by Remark \ref{Remark2comp}. Thus \ref{item:A6} is fulfilled. 

The above calculations along with Theorems \ref{Main2} and \ref{Main} yield the following specific result for discrete Schr\"odinger operators:
\begin{theorem}
Let $\H = \ell^2(\Z^d)$, $H:=H_0+V$ and $A$ be as above, namely
\begin{enumerate}
\item $H_0$ is given by \eqref{DeltaDef} and $A$ is the closure of \eqref{Adef},
\item $V(n)$ is a bounded real-valued function defined on $\Z^d$,
\item \label{Assumption33} $\lim V(n) = 0$ as $\| n\| \to +\infty$, and
\item \label{Assumption44} $\lim |n_i(V-\tau_iV)(n)| = 0$ as $\|n\| \to +\infty$ for all $1\leqslant i \leqslant d$.
\end{enumerate}
Then for all $\lambda \in [0,4d] \setminus \{4k : k=0,...,d\}$ there is a bounded open interval $\I$ containing $\lambda$ such that for all $s>0$ and $\psi \in \H$, propagation estimates \eqref{NewFormula3} and \eqref{NewFormula4} hold, and for all $s>1/2$, estimate \eqref{NewFormula} holds.
\end{theorem}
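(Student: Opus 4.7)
The plan is to recognize that the statement is a direct application of Theorems \ref{Main2} and \ref{Main} to the discrete Schr\"odinger setting, so the bulk of the work is to check that all the hypotheses of these theorems hold. Most of this verification has in fact already been carried out in the paragraphs preceding the statement, and I would begin by collecting these pieces. Specifically: assumptions \ref{item:A1}--\ref{item:A5} hold because $H_0$ is bounded positive with $\sigma(H_0)=[0,4d]$ and $V$ is a multiplication operator which is compact on $\ell^2(\Z^d)$ precisely because $V(n)\to 0$; assumption \ref{item:A3} is trivial since $\H^1=\H$; assumption \ref{item:A2} was established from the explicit formula \eqref{CommutatorLaplacian}, which in fact shows $H_0\in\CC^k(A)$ for every $k$; and assumption \ref{item:A6} follows from the commutator computation for $[V,\i A]$ combined with the hypothesis $|n_i(V-\tau_iV)(n)|=o(1)$, invoking Remark \ref{Remark2comp}.

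Next, I would identify $\mu^A(H)$. Because $V$ is compact on $\H$, the resolvent difference $R(\i)-R_0(\i)=-R(\i)VR_0(\i)$ is compact, so Lemma \ref{Lemma1} applies and gives $\mu^A(H)=\mu^A(H_0)$. The latter set was already computed in \eqref{SetDelta} to be $[0,4d]\setminus\{4k:k=0,\ldots,d\}$, which matches the range of $\lambda$ appearing in the statement.

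The step I expect to be the main obstacle is verifying the kernel condition $\ker(H-\lambda)\subset\D(A)$, which is required by both Theorems \ref{Main2} and \ref{Main}. Unlike the continuous case treated in Theorem \ref{ContSchrodingerThm}, one cannot simply invoke classical absence-of-eigenvalue results to make this condition vacuous, since discrete Schr\"odinger operators can possess embedded eigenvalues even under fast decay of $V$. The approach I would take is to exploit the fact that $H_0\in\CC^k(A)$ for all $k$ and $V\in\CC^{1,\mathrm u}(A)$, which places us in the regularity regime where Froese--Herbst type arguments yield polynomial (indeed often exponential) decay of eigenfunctions associated to eigenvalues in $\mu^A(H)$; since $\D(A)$ is essentially a weighted $\ell^2$ space in the position variable $N$, any such decaying eigenfunction lies in $\D(A)$. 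This is exactly the kind of statement covered by the results of \cite{FMS} referenced in the third remark after Theorem \ref{Main2}, and I would cite this directly rather than re-derive it.

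With all hypotheses of Theorems \ref{Main2} and \ref{Main} verified on a neighborhood of any fixed $\lambda\in[0,4d]\setminus\{4k:k=0,\ldots,d\}$, the conclusion follows by choosing $\I$ as the bounded open interval provided by Theorem \ref{Main} and observing that Theorem \ref{Main2} then delivers \eqref{NewFormula3} and \eqref{NewFormula4} on the same interval, while Theorem \ref{Main} delivers \eqref{NewFormula}.
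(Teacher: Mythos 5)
The overall structure of your argument is exactly right, and all of your verifications of assumptions \ref{item:A1}--\ref{item:A6}, as well as the identification of $\mu^A(H) = [0,4d]\setminus\{4k : k=0,\ldots,d\}$ via Lemma \ref{Lemma1} and \eqref{SetDelta}, coincide with what the paper does. However, your handling of the kernel condition $\ker(H-\lambda)\subset\D(A)$ has a genuine gap. You propose to invoke \cite{FMS}, but those results require high regularity of $H$ with respect to $A$ --- in practice at least $\CC^2(A)$ --- whereas in the present setting the full operator $H=H_0+V$ is only $\CC^{1,\mathrm{u}}(A)$: the high regularity of $H_0$ alone does not help, since it is the regularity of $H$ that controls the Froese--Herbst bootstrap. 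This is not a pedantic objection; the paper's own remark immediately after Theorem \ref{Main2} says in so many words that the $\CC^{1,\mathrm{u}}(A)$ regularity assumed here does not grant the \cite{FMS} hypotheses. The paper instead cites \cite[Theorem~1.5]{Ma2}, a result specific to discrete Schr\"odinger operators on $\Z^d$, which gives sub-exponential decay of eigenfunctions for eigenvalues in $\mu^A(H)$ under precisely Assumptions \arabicref{Assumption33} and \arabicref{Assumption44}, and this decay places the eigenfunction in $\D(A)$. Your instinct that decay of eigenfunctions is the right mechanism is correct, but the reference you lean on does not cover the low-regularity regime; you should instead use the tailored discrete result.

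One additional remark worth noting: the paper also observes that in dimension $d=1$ one can say more, namely that $H$ has no eigenvalues at all in $\mu^A(H)$ by \cite[Theorem~1.3]{Ma2}, so that the kernel condition is vacuous there, while in higher dimensions absence of embedded eigenvalues remains open --- which is exactly why the kernel condition must be handled explicitly and why your concern that this is "the main obstacle" is well placed.
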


\begin{remark}
As seen above, Assumptions (1) - (4) imply that $V$ belongs to $\CC^{1,\rm{u}}(A)$. In particular $H \in \CC^{1,\rm{u}}(A)$. Moreover, $\mu^A(H) = \mu^A(H_0) = [0,4d] \setminus \{4k : k=0,...,d\}$, by Lemma \ref{Lemma1} and \eqref{SetDelta}.
\end{remark}

\begin{remark}
As in the continuous operator case, the condition $\ker(H-\lambda) \subset \D(A)$ holds here for all $\lambda \in \mu^{A}(H)$. Indeed, if $\psi \in \ker(H-\lambda)$ and $\lambda \in \mu^A(H)$, then $\psi$ decays sub-exponentially, see \cite[Theorem 1.5]{Ma2}. Under Assumptions \eqref{Assumption33} and \eqref{Assumption44}, the absence of positive eigenvalues holds for one-dimensional discrete Schr\"odinger operators, by \cite[Theorem 1.3]{Ma2}. To our knowledge, the absence of positive eigenvalues under Assumptions \eqref{Assumption33} and \eqref{Assumption44} is an open problem for multi-dimensional discrete Schr\"odinger operators on $\Z^d$.
\end{remark}







\section{Proof of Theorem \ref{Main2}}
\label{PROOF2}

We start with an improvement of \cite[Proposition 2.1]{GJ1}.
\begin{Lemma}
\label{Lemma:2}
For $\phi, \varphi \in \mathcal{D}(A)$, the rank one operator $|\phi \rangle \langle \varphi | : \psi \mapsto \langle \varphi, \psi \rangle \phi$ is of class $\CC^{1,\rm{u}}(A)$.  
\end{Lemma}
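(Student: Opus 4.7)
My plan is to reduce the question to the $\CC^1$-regularity in $\H$ of the orbits $t \mapsto e^{-\i tA}\phi$ and $t \mapsto e^{-\i tA}\varphi$, and then exploit the fact that the rank-one construction $(u,v) \mapsto |u\rangle\langle v|$ is a continuous bilinear map from $\H \times \H$ into $\B(\H)$ endowed with the norm topology, with operator norm equal to $\|u\|\cdot \|v\|$.

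First, a direct computation shows that for $T := |\phi\rangle\langle\varphi|$ and every $\psi \in \H$,
\begin{equation*}
\bigl(e^{-\i tA} T e^{\i tA}\bigr)\psi = \bigl\langle e^{-\i tA}\varphi, \psi\bigr\rangle \, e^{-\i tA}\phi,
\end{equation*}
so that $e^{-\i tA} T e^{\i tA} = |\phi_t\rangle\langle\varphi_t|$ with $\phi_t := e^{-\i tA}\phi$ and $\varphi_t := e^{-\i tA}\varphi$. Since $\phi,\varphi \in \D(A)$, Stone's theorem guarantees that the maps $t \mapsto \phi_t$ and $t \mapsto \varphi_t$ are $\CC^1(\R;\H)$ with derivatives $\phi_t' = -\i e^{-\i tA} A\phi$ and $\varphi_t' = -\i e^{-\i tA} A\varphi$, both continuous in $\H$ by strong continuity of the unitary group.

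Second, I would write the difference quotient as
\begin{equation*}
\frac{|\phi_{t+h}\rangle\langle\varphi_{t+h}| - |\phi_t\rangle\langle\varphi_t|}{h} = \Bigl|\tfrac{\phi_{t+h}-\phi_t}{h}\Bigr\rangle\langle\varphi_{t+h}| + |\phi_t\rangle\Bigl\langle\tfrac{\varphi_{t+h}-\varphi_t}{h}\Bigr|,
\end{equation*}
and use the bound $\bigl\| |u\rangle\langle v| \bigr\| = \|u\|\cdot\|v\|$ together with $\phi_{t+h}\to \phi_t$, $\varphi_{t+h}\to\varphi_t$ in $\H$, and the existence of $\phi_t'$, $\varphi_t'$ in $\H$, to conclude that the difference quotient converges in the operator norm of $\B(\H)$ to
\begin{equation*}
|\phi_t'\rangle\langle\varphi_t| + |\phi_t\rangle\langle\varphi_t'|.
\end{equation*}

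Finally, the continuity of the derivative in the norm topology follows from the same estimate $\bigl\||u\rangle\langle v|\bigr\| = \|u\|\|v\|$ combined with the strong continuity of $t\mapsto A\phi_t = e^{-\i tA}A\phi$ and $t\mapsto A\varphi_t$ in $\H$. This shows $|\phi\rangle\langle\varphi| \in \CC^{1,\rm{u}}(A)$. The whole argument is essentially routine; the only mild point to watch is that one genuinely needs $\phi,\varphi \in \D(A)$ (rather than just $\phi,\varphi\in\H$) in order to upgrade the strong $\CC^1$ behavior of the orbits to norm $\CC^1$ behavior of the rank-one operator.
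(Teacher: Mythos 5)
Your proof is correct, and it takes a genuinely different route from the one in the paper. The paper first establishes $T := |\phi\rangle\langle\varphi| \in \CC^1(A)$ by showing the sesquilinear form $\psi \mapsto \langle\psi,[T,A]\psi\rangle$ on $\D(A)$ is continuous for the $\H$-topology (invoking \cite[Lemma~6.2.9]{ABG}), computes $[T,\i A]_\circ = |\phi\rangle\langle A\varphi| - |A\phi\rangle\langle\varphi|$, observes this is a rank-at-most-two and hence compact operator, and then cites Remark~\ref{Remark2comp}, the bounded variant of Proposition~\ref{PropC1UU}: if $T \in \CC^1(A) \cap \B(\H)$ and $[T,\i A]_\circ \in \K(\H)$, then $T \in \CC^{1,\mathrm{u}}(A)$. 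Your approach bypasses that criterion entirely: you exploit the explicit formula $e^{-\i tA}Te^{\i tA} = |\phi_t\rangle\langle\varphi_t|$ and the identity $\bigl\||u\rangle\langle v|\bigr\| = \|u\|\,\|v\|$ to differentiate the orbit directly in operator norm, pulling $\CC^1$-regularity of the rank-one operator straight from Stone's theorem applied to the vectors $\phi$ and $\varphi$. Your argument is more elementary and self-contained, and it transparently shows exactly where $\phi,\varphi \in \D(A)$ enters; the paper's argument is shorter given that Proposition~\ref{PropC1UU} has already been proved, and it illustrates the usefulness of that compactness criterion, which the paper reuses elsewhere (e.g.\ in Section~\ref{Section:Examples}). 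One small terminological caveat in your write-up: the map $(u,v)\mapsto |u\rangle\langle v|$ is linear in $u$ but antilinear in $v$, so it is not $\C$-bilinear; this does not affect any step since you only use additivity in each slot and the norm identity.
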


\begin{proof}
First, by \cite[Lemma 6.2.9]{ABG}, $|\phi \rangle \langle \varphi | \in \CC^1(A)$ if and only if the sesquilinear form  
\[ \D(A) \ni \psi \mapsto \langle \psi, [ | \phi \rangle \langle \varphi |, A ] \psi \rangle :=  \langle \langle \phi, \psi \rangle \varphi,  A \psi \rangle  - \langle A \psi,  \langle \varphi | \psi \rangle \phi \rangle \] 
is continuous for the topology induced by $\H$. Since 
\[ \langle \psi, [ | \phi \rangle \langle \varphi |, A ] \psi \rangle =  \langle \psi, \phi \rangle \langle A \varphi, \psi \rangle - \langle \psi, A \phi \rangle \langle \varphi, \psi \rangle = \langle \psi, \left(  |\phi \rangle \langle A \varphi |  - | A \phi \rangle \langle \varphi | \right) \psi \rangle, \]
we see that $|\phi \rangle \langle \varphi | \in \CC^1(A)$ and $[ | \phi \rangle \langle \varphi |, A ]_{\circ} =  |\phi \rangle \langle A \varphi |  - | A \phi \rangle \langle \varphi |$, which is a bounded operator of rank at most two. Apply Proposition \ref{PropC1UU}, more specifically Remark \ref{Remark2comp}, to obtain the result. \qed
\end{proof}
\vspace{0.5cm}
Next, we quote for convenience the result of \cite{Ri} that we use in the proof of Theorem \ref{Main2}.

\begin{theorem}\cite[Theorem 1]{Ri}
Let $H$ and $A$ be self-adjoint operators in $\H$ with $H \in \CC^{1,\rm u}(A)$. Assume that there exist an open interval $J \subset \R$ and $c >0$ such that $\eta (H) [H, \i A]_{\circ} \eta(H) \geqslant c \cdot \eta^2(H)$ for all real $\eta \in \CC_{\rm c} ^{\infty}(J)$. Let $a$ and $t$ be real numbers. Then for each real $\eta \in \CC_{\rm c}^{\infty}(J)$ and for each $v < c$ one has uniformly in $a$,
\[ \| E_{(-\infty, a+vt]}(A) e^{-\i tH} \eta(H) E_{[a,+\infty)}(A) \| \to 0 \quad \text{as} \quad t \to +\infty.\]
\end{theorem}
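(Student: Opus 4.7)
The plan is to establish this minimal escape velocity bound via a propagation observable method in the spirit of Sigal--Soffer and Hunziker--Sigal--Soffer, adapted to the weaker regularity $\CC^{1,\rm u}(A)$ by a careful scaling argument. Fix $v'$ with $v < v' < c$, a parameter $R > 0$ to be chosen, and a non-decreasing $F \in \CC^\infty(\R;[0,1])$ with $F \equiv 0$ on $(-\infty,-1]$ and $F \equiv 1$ on $[0,+\infty)$. Set $F_{a,t}(s) := F((s-a-v't)/R)$ and consider the propagation observable
\[
\Phi_a(t) := \eta(H)\, F_{a,t}(A)\, \eta(H).
\]
Because $F_{a,0}(A)$ acts as the identity on $\mathrm{Ran}\, E_{[a,+\infty)}(A)$ and because for $t \geq R/(v'-v)$ one has the operator inequality $1 - F_{a,t}(A) \geq E_{(-\infty, a+vt]}(A)$ (the r.h.s.\ is a spectral projection of $A$ on which $F_{a,t}(A)$ vanishes), for such $t$ and any $\psi = E_{[a,+\infty)}(A)\chi$ with $\|\chi\| \leq 1$ we obtain, using $[\eta(H), e^{-itH}]=0$,
\[
\|E_{(-\infty,a+vt]}(A)\, e^{-itH}\eta(H)\psi\|^2 \;\leq\; \|\eta(H)\psi\|^2 \;-\; \langle e^{-itH}\psi,\, \Phi_a(t)\, e^{-itH}\psi\rangle.
\]
The theorem thus reduces to showing the right-hand side tends to $0$ as $t \to +\infty$, uniformly in $a \in \R$ and in $\chi$ with $\|\chi\|\leq 1$.

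To control it, set $\psi_s := e^{-isH}\psi$ and compute the Heisenberg derivative
\[
D\Phi_a(s) \;=\; \partial_s \Phi_a(s) + i[H, \Phi_a(s)]
\;=\; -\frac{v'}{R}\,\eta(H) F'_{a,s}(A)\eta(H) \;+\; \eta(H)\, i[H, F_{a,s}(A)]_\circ\, \eta(H).
\]
The commutator makes sense because $H \in \CC^1(A)$. Expanding $F_{a,s}(A)$ via the Helffer--Sj\"ostrand formula (cf.\ Appendix \ref{Appendix}) and applying the identity \eqref{CommutatorResolvent}, one isolates a leading term $\tfrac{1}{R}\,F'_{a,s}(A)\,[H, iA]_\circ$ together with a remainder $\mathcal R_a(s)$ involving an additional commutator with a resolvent of $A$. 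Combining this with the Mourre estimate $\eta(H)[H, iA]_\circ \eta(H)\geq c\,\eta^2(H)$, and with a near-commutation of $\eta(H)$ through $F'_{a,s}(A)$ (both controlled by the $\CC^{1,\rm u}(A)$ norm, as in the proof of Proposition \ref{PropC1UU}), one obtains
\[
D\Phi_a(s) \;\geq\; \frac{c-v'}{R}\,\eta(H) F'_{a,s}(A)\eta(H) \;-\; \mathcal E(s,R,a),
\]
where $c-v'>0$ and $\mathcal E$ collects the commutator remainders.

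The quantitative input from $H\in\CC^{1,\rm u}(A)$ is that $\tau \mapsto e^{-i\tau A}\eta(H)[H,iA]_\circ\eta(H)\,e^{i\tau A}$ is norm-continuous at $\tau = 0$; the relevant $\tau$'s appearing in the Helffer--Sj\"ostrand representation of $\mathcal E$ are of size $\lesssim 1/R$. This yields a bound $\sup_{a\in\R}\|\mathcal E(s,R,a)\| \leq \omega(1/R)$ with $\omega(r)\to 0$ as $r\to 0$, the supremum in $a$ being automatic since $F_{a,s}$ depends on $A$ only through a spectral shift, under which the relevant operator norms are invariant. Dropping the nonnegative first term and integrating from $0$ to $T$ then gives
\[
\|\eta(H)\psi\|^2 - \langle \psi_T, \Phi_a(T)\psi_T\rangle \;\leq\; T\,\omega(1/R),
\]
uniformly in $a$ and $\psi$. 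The final step is a diagonal choice $R=R(T)$ with $R(T) \leq (v'-v)T$ and $T\,\omega(1/R(T))\to 0$ as $T\to+\infty$. I expect the main obstacle to lie exactly here: under only $\CC^{1,\rm u}(A)$ regularity, $\omega$ is controlled only qualitatively (in contrast to $\CC^{1,1}(A)$ or $\CC^2(A)$, where one gets summable decay in $R$), so the scales must be matched adaptively while simultaneously preserving the geometric separation condition $R \leq (v'-v)T$; this delicate balancing, together with the $a$-uniformity afforded by translation invariance of $A$, is the technical heart of the argument.
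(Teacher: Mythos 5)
This statement is not proved in the paper at all: it is quoted verbatim from Richard \cite{Ri}, whose argument is precisely a propagation-observable proof in the Hunziker--Sigal--Soffer spirit. So your overall strategy is the right one and coincides with the cited source. However, as written your quantitative accounting has a genuine gap, and it sits exactly at the step you single out as the "technical heart". You bound the error in the Heisenberg derivative by $\sup_{a}\|\mathcal E(s,R,a)\|\leqslant\omega(1/R)$ with only $\omega(r)\to 0$, and then hope to choose $R=R(T)$ with $R(T)\leqslant (v'-v)T$ and $T\,\omega(1/R(T))\to 0$. No such choice exists in general: the geometric reduction forces $R(T)\leqslant (v'-v)T$, and since $\omega$ may be taken nondecreasing, $T\,\omega(1/R(T))\geqslant T\,\omega\bigl(1/((v'-v)T)\bigr)$, which does not tend to zero unless $\omega(r)=o(r)$ (for instance $\omega(r)=\sqrt r$ makes it diverge like $\sqrt T$). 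In other words, what must be proved is that the error per unit time is $o(1/R)$ \emph{uniformly in $a$ and $s$} -- the same order $1/R$ as the retained positive term, times a factor vanishing as $R\to\infty$ -- and your sketch never produces the extra factor $1/R$. This factor is in fact available, and extracting it is the real content of the $\CC^{1,\rm u}(A)$ hypothesis: writing $g(x)=F((x-a-v's)/R)$ and using $[H,e^{\i\tau A}]=e^{\i\tau A}\int_0^\tau e^{-\i\sigma A}[H,\i A]_{\circ}e^{\i\sigma A}\,d\sigma$ (suitably sandwiched by $\eta(H)$ or resolvents of $H$ so all norms make sense), one gets $\i[H,g(A)]-g'(A)[H,\i A]_{\circ}=\frac{\i}{2\pi}\int\hat g(\tau)\,e^{\i\tau A}\int_0^\tau\bigl(e^{-\i\sigma A}[H,\i A]_{\circ}e^{\i\sigma A}-[H,\i A]_{\circ}\bigr)d\sigma\,d\tau$; since $|\tau\,\hat g(\tau)|=|\widehat{F'}(R\tau)|$, the substitution $u=R\tau$ bounds this by $\frac{1}{2\pi R}\int|\widehat{F'}(u)|\sup_{|\sigma|\leqslant |u|/R}\omega_0(\sigma)\,du=R^{-1}\epsilon(R)$ with $\epsilon(R)\to0$ by dominated convergence, where $\omega_0$ is the modulus of norm-continuity of $\sigma\mapsto e^{-\i\sigma A}[H,\i A]_{\circ}e^{\i\sigma A}$ furnished by $H\in\CC^{1,\rm u}(A)$; the bound is independent of $a$ and $s$ because the shift enters only through a phase. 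The commutators needed to symmetrize and to move $\eta(H)$, $\tilde\eta(H)$ past functions of $A$ before invoking the Mourre hypothesis must be shown $o(1/R)$ or $O(1/R^2)$ in the same way. Once this is done, the plain choice $R=(v'-v)T$ closes the argument; no adaptive matching of scales is needed or indeed possible beyond it.

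A second, smaller gap: your integrated inequality tacitly uses $\langle\psi,\Phi_a(0)\psi\rangle=\|\eta(H)\psi\|^2$, which is false because $\eta(H)$ does not commute with $F_{a,0}(A)$. Since $(1-F_{a,0})\mathbf{1}_{[a,+\infty)}=0$ as functions, the discrepancy equals $\langle\eta(H)\psi,[\eta(H),F_{a,0}(A)]E_{[a,+\infty)}(A)\chi\rangle$, which is $O(1/R)$ uniformly in $a$ by the usual Helffer--Sj\"ostrand scaling for $[\eta(H),F_{a,0}(A)]$ (using $\eta(H)\in\CC^1(A)$). It is therefore harmless, but it must be added to the error budget rather than assumed to vanish.
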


We are now ready to prove Theorem \ref{Main2}. \\

\vspace{0.2cm}

\noindent \textit{Proof of Theorem \ref{Main2}.}
Let $\I \subset \R$ be a compact interval as in the statement of Theorem \ref{Main2}, that is, for all $\lambda \in \I$, $\lambda \in \mu^A(H)$ and $\ker(H-\lambda) \subset \D(A)$. Let $\lambda \in \I$ be given, and assume that a Mourre estimate holds with $K \in \K(\H)$ in a neighborhood $J$ of $\lambda$.
\\ {\bf Step 1:} This step is a remark due to Serge Richard. In this step, we assume that $\lambda$ is not an eigenvalue of $H$. In this case, from the Mourre estimate, we may derive a strict Mourre estimate on a possibly smaller neighborhood of $\lambda$, because $E_{J}(H)KE_J(H)$ converges in norm to zero as the support of $J$ shrinks to zero around $\lambda$. So, without loss of generality, there is an open interval $J$ containing $\lambda$ and $c >0$ such that a strict Mourre estimate holds for $H$ on $J$, i.e.
\[ E_J(H) [H, \i A]_{\circ} E_J(H) \geqslant c E_J (H). \]
In particular, $J$ does not contain any eigenvalue of $H$. We look to apply \cite[Theorem 1]{Ri}. Let $\psi \in \H$, and assume without loss of generality that $\| \psi \| =1$.  Fix $v \in (0,c)$ and let $a \in \R$. Let $\eta \in \CC^{\infty}_c (J)$ be such that $\max_{x \in J}  | \eta (x) | \leqslant 1$, so that $\| \eta(H) \| \leqslant 1$. Note also that $\| \langle A \rangle ^{-s} \| \leqslant 1$ for all $s>0$. Then 
\begin{align*}
\| \langle A \rangle ^{-s} e^{-\i tH} \eta (H) \psi \| & \leqslant \| \langle A \rangle ^{-s} e^{-\i tH} \eta (H) E_{(-\infty, a)}(A) \psi \| + \| \langle A \rangle ^{-s} e^{-\i tH} \eta (H) E_{[a,+\infty)}(A) \psi  \| \\
& \leqslant \| E_{(-\infty, a)}(A) \psi \| + \| \langle A \rangle ^{-s} E_{(-\infty, a+vt]}(A) e^{-\i tH} \eta (H) E_{[a,+\infty)}(A)  \psi \| \\
& \quad + \| \langle A \rangle ^{-s} E_{(a+vt, +\infty)}(A) e^{-\i tH} \eta (H) E_{[a,+\infty)}(A)  \psi \| \\
& \leqslant \| E_{(-\infty, a)}(A) \psi \| +  \| E_{(-\infty, a+vt]}(A) e^{-\i tH} \eta (H) E_{[a,+\infty)}(A) \|  \\
& \quad + \| \langle A \rangle ^{-s} E_{(a+vt, +\infty)}(A) \| 
\end{align*}
Let $\epsilon > 0$ be given. Choose $a$ so that $\|E_{(-\infty, a)}(A) \psi \| \leqslant \epsilon/3$. Then take $t$ large enough so that the other two terms on the r.h.s.\ of the previous inequality are each less than $\epsilon/3$. The second one is controlled by \cite[Theorem 1]{Ri} and the third one by functional calculus. Then $\| \langle A \rangle ^{-s} e^{-\i tH} \eta (H) \psi \| \leqslant \epsilon$. Thus 
\[ \lim \limits_{t \to +\infty} \| \langle A \rangle ^{-s} e^{-\i tH} \eta (H) \psi \| = 0. \] 
By taking a sequence $\eta_k \in \CC^{\infty} _c(J)$ that converges pointwise to the characteristic function of $J$, we infer from the previous limit that
\[ \lim \limits_{t \to +\infty} \| \langle A \rangle ^{-s} e^{-\i tH} E_J (H) \psi \| = 0. \]
Finally, as there are no eigenvalues of $H$ in $J$, $E_{J}(H)=E_{J}(H) P_{\rm c}(H)$ and we have
\begin{equation}
\label{eqn1st} 
\lim \limits_{t \to +\infty} \| \langle A \rangle ^{-s} e^{-\i tH} E_J (H) P_{\rm c} (H) \psi \| = 0.
\end{equation}
\noindent {\bf Step 2:} In this step, $\lambda \in \I$ is assumed to be an eigenvalue of $H$. By adding a constant to $H$, we may assume that $\lambda \neq 0$. By assumption, there is an interval $J$ containing $\lambda$, $c >0$ and $K \in \K(\H)$ such that 
\[ E_J(H) [H, \i A]_{\circ} E_J(H) \geqslant c E_J(H) + K. \]
As the point spectrum of $H$ is finite in $J$, we further choose $J$ so that it contains only one eigenvalue of $H$, namely $\lambda$. Furthermore, the interval $J$ is chosen so that $0 \not \in J$. Denote $P = P_{\{ \lambda  \} }(H)$ and $P^{\perp} := 1 - P_{\{ \lambda \} }(H)$. Also let $H' := H P^{\perp}$. Then
\[ P^{\perp}E_J(H) [H, \i A]_{\circ} E_J(H) P^{\perp} \geqslant c E_J(H) P^{\perp} + P^{\perp} E_J (H) K E_J(H) P^{\perp}. \]
Functional calculus yields $P^{\perp} E_J (H) = E_J (H P ^{\perp})$ -- this is where the technical point $0 \not \in J$ is required. Moreover, $P^{\perp} E_J (H) K E_J (H) P^{\perp}$ converges in norm to zero as the size of the interval $J$ shrinks to zero around $\lambda$. Therefore there is $c' >0$ and an open interval $J'$ containing $\lambda$, with $J' \subset J$, such that 
\[ E_{J'}(H') [H', \i A]_{\circ} E_{J'}(H') \geqslant c' E_{J'}(H'). \]
In other words, a strict Mourre estimate holds for $H'$ on $J'$. Now $H' := H P^{\perp} = H - H P$. Note that $P$ is a finite sum of rank one projectors because $\lambda \in \mu^A(H)$. Thanks to the assumption $\ker(H-\lambda) \subset \D(A)$, we have by Lemma \ref{Lemma:2} that $P \in \CC^{1,\rm{u}}(A)$. Thus $H' \in \CC^{1,\rm{u}}(A)$. Performing the same calculation as in Step 1 with $(H',J')$ instead of $(H,J)$ gives 
\[ \lim \limits_{t \to + \infty} \| \langle A \rangle ^{-s} e^{-\i tH'} E_{J'} (H') \psi \| = 0. \] 
Since $e^{-\i tH'} E_{J'} (H') = e^{-\i tH} E_{J'} (H) P^{\perp}$, we have 
\[\lim \limits_{t \to + \infty} \| \langle A \rangle ^{-s} e^{-\i tH} E_{J'} (H) P^{\perp} \psi \| = 0. \]
The only eigenvalue of $H$ belonging to $J'$ is $\lambda$, so $E_{J'}(H) P^{\perp}=E_{J'}(H) P_{\rm c}(H)$. Thus
\begin{equation}
\label{eqn2nd} 
\lim \limits_{t \to + \infty} \| \langle A \rangle ^{-s} e^{-\i tH} E_{J'} (H) P_{\rm c}(H) \psi \| = 0.
\end{equation}
\noindent{\bf Step 3:} In this way, for each $\lambda \in \I$, we obtain an open interval $J_{\lambda}$ or $J'_{\lambda}$ containing $\lambda$ such that \eqref{eqn1st} or \eqref{eqn2nd} holds true, depending on whether $\lambda$ is an eigenvalue of $H$ or not. To conclude, as 
\[ \left( \bigcup \limits_{\substack{\lambda \in \I, \\ \lambda  \in \sigma_{\rm{c}}(H)}} J_{\lambda} \right) \bigcup  \left( \bigcup \limits_{\substack{\lambda \in \I, \\ \lambda \in \sigma_{\rm{pp}}(H)}}  J'_{\lambda} \right) \] 
is an open cover of $\I$, we may choose a finite sub-cover. If $\{J_i\}_{i=1}^n$ denotes this sub-cover, we may further shrink these intervals so that $J_i \cap J_j = \emptyset$ for $i \neq j$, $\overline{\cup J_i} = \I$, and $\overline{J_i} \cap \overline{J_j} \in \sigma_{\rm{c}}(H)$ for $\i \neq j$. Thus $E_{\I}(H) P_{\rm{c}}(H) = \sum_i ^n E_{J_i}(H) P_{\rm{c}}(H)$. Then, by applying \eqref{eqn1st} and \eqref{eqn2nd} we get
\[ \lim \limits_{t \to + \infty} \| \langle A \rangle ^{-s} e^{-\i tH} E_{\I}(H) P_{\rm c}(H) \psi \| \leqslant \lim \limits_{t \to + \infty} \sum_{i=1}^n \| \langle A \rangle ^{-s} e^{-\i tH} E_{J_i}(H) P_{\rm c}(H) \psi \| = 0. \]
This proves the estimate \eqref{NewFormula3}.
\\
\noindent{\bf Step 4:} We turn to the proof of \eqref{NewFormula4}. Since, $A$ is self-adjoint, $\D(A)$ is dense in $\H$. Let $\{ \phi_n \}_{n=1}^{\infty} \subset \D(A)$ be an orthonormal set. Let $W \in \K(\H)$ and denote $F_N := \sum _{n=1} ^N \langle \phi_n, \cdot \rangle W\phi_n$. The proof of \cite[Theorem VI.13]{RS1} shows that $\|  W-F_N \| \to 0$ as $N \to + \infty$. Then
\begin{align*}
 \| W P_{\rm{c}} (H) E_\I (H)  e^{-\i tH} \psi \| &\leqslant  \| (W-F_N) P_{\rm{c}} (H) E_\I (H)  e^{-\i tH} \psi \| +  \| F_N P_{\rm{c}} (H) E_\I (H)  e^{-\i tH} \psi \| \\
& \leqslant \underbrace{\| W-F_N \|}_{\to \ 0 \ \text{as} \ N \to +\infty} +  \ \| F_N \langle A \rangle \| \underbrace{\| \langle A \rangle ^{-1} P_{\rm{c}} (H) E_\I (H)  e^{-\i tH} \psi \|}_{\to \ 0 \ \text{as} \ t \to + \infty}.  
\end{align*}
The result follows by noting that $F_N \langle A \rangle$ is a bounded operator for each $N$. If $W$ is $H$-relatively compact,  use the fact that $E_\I(H) (H+\i)$ is a bounded operator.
\qed



\section{Proof of Theorem \ref{Main}}
\label{PROOF}

To prove the result, we will need the following fact:
\begin{Lemma}
\label{Lem1}
Let $T$ be a self-adjoint operator with $T \in \CC^1(A)$. Let $\lambda \in \mu^A(T)$ and suppose that $\ker(T-\lambda) \subset \mathcal{D}(A)$. Then there is an interval $\I \subset \mu^A(T)$ containing $\lambda$ such that $P_{\rm{c}}^{\perp}(T) E_{\I}(T)$ and $P_{\rm{c}} (T) \eta(T)$ are of class $\CC^1(A)$ for all $\eta \in C_c^{\infty}(\R)$ with $\text{supp}(\eta) \subset \I$. 
\end{Lemma}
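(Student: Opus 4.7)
The plan is to exploit the fact that, near a point $\lambda \in \mu^A(T)$, the Mourre estimate forces the pure point spectrum of $T$ to be locally finite-rank, and to combine this with Lemma \ref{Lemma:2}.

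First I would choose the interval $\I$. Since $\mu^A(T)$ is open, there is an open interval $\I_0 \subset \mu^A(T)$ containing $\lambda$. By \cite[Corollary 7.2.11]{ABG}, the total multiplicity of eigenvalues of $T$ in any compact subinterval of $\mu^A(T)$ is finite, so by shrinking $\I_0$ I can obtain an open interval $\I \subset \mu^A(T)$ with $\lambda \in \I$ such that $\lambda$ is the only possible eigenvalue of $T$ in $\overline{\I}$. With this choice, the spectral projector decomposes as $P_{\rm c}^{\perp}(T) E_{\I}(T) = P_{\{\lambda\}}(T)$ if $\lambda$ is an eigenvalue, and is simply $0$ otherwise.

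Next I would verify the first claim. If $\lambda$ is not an eigenvalue, $P_{\rm c}^{\perp}(T) E_{\I}(T)=0 \in \CC^1(A)$ trivially. Otherwise, the eigenspace $\ker(T-\lambda)$ is finite-dimensional by the Virial Theorem / finite multiplicity argument, so one can write
\[
P_{\{\lambda\}}(T) = \sum_{i=1}^{N} |\phi_i\rangle\langle\phi_i|,
\]
where $\{\phi_i\}_{i=1}^N$ is an orthonormal basis of $\ker(T-\lambda)$. By hypothesis, each $\phi_i \in \D(A)$, so Lemma \ref{Lemma:2} gives $|\phi_i\rangle\langle\phi_i| \in \CC^{1,\rm u}(A) \subset \CC^1(A)$. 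Since $\CC^1(A)$ is stable under finite sums, $P_{\rm c}^{\perp}(T) E_{\I}(T) \in \CC^1(A)$.

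For the second claim, let $\eta \in C_c^{\infty}(\R)$ with $\operatorname{supp}(\eta) \subset \I$. Since $T \in \CC^1(A)$, the Helffer--Sj\"ostrand functional calculus (see e.g.\ \cite[Theorem 6.2.5]{ABG}) yields $\eta(T) \in \CC^1(A)$. Using $\eta(T) = E_{\I}(T)\eta(T)$, I would write
\[
P_{\rm c}(T)\eta(T) \;=\; \eta(T) \;-\; P_{\rm c}^{\perp}(T) E_{\I}(T)\, \eta(T),
\]
which exhibits $P_{\rm c}(T)\eta(T)$ as a sum and product of $\CC^1(A)$ operators, and hence as an element of $\CC^1(A)$.

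The main obstacle is really contained in Step 1, where the interval $\I$ must be chosen so as to isolate $\lambda$ from the other eigenvalues of $T$; this rests on the finite multiplicity of eigenvalues in $\mu^A(T)$, which follows from the Mourre estimate and is standard. The rest of the argument is then an algebraic manipulation leveraging the regularity of rank-one operators built from vectors in $\D(A)$, as provided by Lemma \ref{Lemma:2}.
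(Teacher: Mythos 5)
Your proof is correct and takes essentially the same approach as the paper: shrink the interval so that the local point spectrum is finite-dimensional and absorbed by $\ker(T-\lambda)\subset\D(A)$, apply Lemma \ref{Lemma:2} to the resulting finite-rank projector, and handle $P_{\rm c}(T)\eta(T)$ via the identity $P_{\rm c}(T)\eta(T)=\eta(T)-P_{\rm c}^{\perp}(T)E_{\I}(T)\eta(T)$ together with $\eta(T)\in\CC^1(A)$ from Helffer--Sj\"ostrand. The only cosmetic difference is that you shrink $\I$ to isolate $\lambda$ as the sole possible eigenvalue, while the paper phrases the shrinking as ensuring $\ker(T-\lambda')\subset\D(A)$ for all $\lambda'\in\I$; the two are the same thing once one notes that a sufficiently small $\I$ contains no other eigenvalues.
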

\begin{proof}
Since there are finitely many eigenvalues of $T$ in a neighborhood of $\lambda$, there is a bounded interval $\I$ containing $\lambda$ such that $\ker(T-\lambda') \subset \D(A)$ for all $\lambda' \in \I$. Then $P_{\rm{c}}^{\perp}(T) E_{\I}(T)$ is a finite rank operator and belongs to the class $\CC^1(A)$ by Lemma \ref{Lemma:2}. Moreover $T \in \CC^1(A)$ implies $\eta(T) \in \CC^1(A)$, by the Helffer-Sj\"ostrand formula. So $P_{\rm{c}}^{\perp} (T) E_{\I}(T) \eta(T) \in \CC^1(A)$ as the product of two bounded operators in this class. Finally, $P_{\rm{c}} (T) \eta(T) = \eta(T) - P_{\rm{c}}^{\perp} (T) E_{\I}(T) \eta(T)$ is a difference of two bounded operators in $\CC^1(A)$, so $P_{\rm{c}} (T) \eta(T) \in \CC^1(A)$. 
\qed
\end{proof}

\vspace{0.5cm}
\noindent \textit{Proof of Theorem \ref{Main}.}
Since $H_0$ is semi-bounded and $\sigma_{\text{ess}}(H) = \sigma_{\text{ess}}(H_0)$, there is $\varsigma \in \R \setminus (\sigma(H) \cup \sigma(H_0))$. Denote the resolvents of $H$ and $H_0$ respectively by $R(z) := (z-H)^{-1}$ and $R_0(z) := (z-H_0)^{-1}$. Also denote the spectral projector of $R(z)$ onto the continuous spectrum by $P_{\rm{c}}(R(z))$. We split the proof into four parts. First we translate the problem in terms of the resolvent $R(\varsigma)$. Second we show the following formula: 
\begin{equation}
\label{Formula1}
\begin{split}
& P_{\rm{c}}(R(\varsigma)) \theta(R(\varsigma)) [R(\varsigma), \i \varphi(A/L)]_{\circ} \theta(R(\varsigma))  P_{\rm{c}}(R(\varsigma))  \geqslant \\
& \ \ \ \ \ \ \ \ \ \ \ \ \ \ \ \ \ \ \ \ \ \ \ \ \ \  L^{-1} P_{\rm{c}}(R(\varsigma)) \theta(R(\varsigma)) \Big( C \langle A/L \rangle ^{-2s} + K \Big) \theta(R(\varsigma)) P_{\rm{c}}(R(\varsigma)),
\end{split}
\end{equation}
where $\theta$ is a smooth function compactly supported about $(\varsigma - \lambda)^{-1}$, $\varphi$ is an appropriately chosen smooth bounded function, $L \in \R^+$ is sufficiently large, $K$ is a compact operator uniformly bounded in $L$, $C>0$, and $s \in (1/2,1)$. $\theta, \varphi, C$ and $s$ are independent of $L$. This formula is expressed in terms of the resolvent $R(\varsigma)$. Third, we look to convert it into a formula for $H$. We show that the latter formula implies the existence of an open interval $J$ containing $\lambda$ such that
\begin{equation}
\begin{split}
\label{Formula2}
&    P_{\rm{c}}(H) E_{J}(H) [R(\varsigma), \i \varphi(A/L)]_{\circ}  E_{J}(H) P_{\rm{c}}(H) \geqslant \\
& \ \ \ \ \ \ \ \ \ \ \ \ \ \ \ \ \ \ \ L^{-1} P_{\rm{c}}(H) E_{J}(H) \Big( C \langle A/L \rangle ^{-2s} + K \Big) E_{J}(H) P_{\rm{c}}(H).
\end{split}
\end{equation}
We note that the operator $K$ is the same in \eqref{Formula1} and \eqref{Formula2}. Fourth, we insert the dynamics into the previous formula and average over time. We notably use the RAGE Theorem \eqref{RAGEsup} to derive the desired formula, i.e.
\begin{equation}
\label{Formula3}
\lim \limits_{T \to \pm \infty} \sup \limits_{\| \psi\| \leqslant 1} \frac{1}{T} \int_0 ^T  \| \langle A \rangle ^{-s} e^{-\i t H} P_{\rm{c}}(H) E_{J}(H)\psi \|^2 dt = 0.
\end{equation}

\noindent \textbf{Part 1:}  Let $\lambda \in \mu^A(H)$ be such that $\ker(H-\lambda) \subset \mathcal{D}(A)$. Then there are finitely many eigenvalues in a neighborhood of $\lambda$ including multiplicity. We may find an interval $\I = (\lambda_0,\lambda_1)$ containing $\lambda$ such that $\I \subset \mu^A(H)$ and for all $\lambda' \in \I$, $\ker(H-\lambda') \subset \mathcal{D}(A)$. Define 
\begin{equation}
\label{Functionf}
f:\R \setminus \{\varsigma\} \mapsto \R,  \quad f:x \mapsto 1/(\varsigma-x).
\end{equation}
Since eigenvalues of $H$ located in $\I$ are in one-to-one correspondence with the eigenvalues of $R(\varsigma)$ located in $f(\I) = (f(\lambda_0),f(\lambda_1))$, it follows that $f(\I)$ is an interval containing $f(\lambda)$ such that $f(\I) \subset \mu^A(R(\varsigma))$ and $\ker(R(\varsigma)-\lambda') \subset \mathcal{D}(A)$ for all $\lambda' \in f(\I)$. Note the use of Proposition \ref{conjugateResolvent}.

To simplify the notation in what follows, we let $R := R(\varsigma)$, $R_0 := R_0(\varsigma)$ and $P_{\rm{c}} := P_{\rm{c}}(R(\varsigma))$, as $\varsigma$ is fixed. Also let $R_A(z) := (z-A/L)^{-1}$, where $L \in \R^+$.

\noindent \textbf{Part 2:} Let $\theta,\eta,\chi \in C^{\infty}_c (\R)$ be bump functions such that $f(\lambda) \in$ supp$(\theta) \subset$ supp$(\eta) \subset$ supp$(\chi) \subset f(\I)$, $\eta \theta = \theta$ and $\chi \eta =\eta$. Let $s \in (1/2,1)$ be given. Define 
\begin{equation*}
\varphi : \R \mapsto \R, \qquad \varphi : t \mapsto \int _{-\infty} ^t \langle x \rangle ^{-2s} dx.
\end{equation*}
Note that $\varphi \in \mathcal{S}^{0}(\R)$. The definition of $\mathcal{S}^0(\R)$ is given in \eqref{decay1}. Consider the bounded operator
\begin{equation*}
F  := \Pp \tH [\RH, \i \varphi(A/L)]_{\circ} \tH \Pp = \frac{\i}{2\pi L} \int_{\C} \pp \Pp \tH  R_A(z) [\RH, \i A]_{\circ} R_A(z) \tH \Pp  \ \dz.
\end{equation*}
By Lemma \ref{Lem1} with $T = R$, $\Pp \eH \in C^1(A)$, so 
\begin{equation*}
[\Pp \eH,R_A(z)]_{\circ} = L^{-1} R_A(z) [\Pp \eH, A]_{\circ} R_A(z).
\end{equation*} 
In the formula defining $F$, we introduce $\Pp\eH$ next to $\Pp \tH$ and commute it with $R_A(z)$:
\begin{align*}
\label{la;}
F &= \frac{\i}{2\pi L} \int_{\C} \pp \Pp \tH  \Big(R_A(z) \Pp \eH + [\Pp \eH,R_A(z)]_{\circ} \Big) [\RH, \i A]_{\circ} \times \\ 
& \ \ \ \ \ \ \ \ \ \ \ \ \ \ \ \ \ \ \ \ \ \ \ \ \Big(\eH \Pp R_A(z)+ [R_A(z),\Pp \eH]_{\circ} \Big)  \tH \Pp \ \dz \\
&= \frac{\i}{2\pi L} \int_{\C} \pp \Pp \tH  R_A(z) \Pp \eH [\RH, \i A]_{\circ} \eH \Pp R_A(z) \tH \Pp \ \dz   \\
&\quad + L^{-1} \Pp \tH  \left( I_1 + I_2 + I_3 \right)  \tH   \Pp,
\end{align*}
where 
\begin{align*}
I_1 &= \frac{\i}{2\pi} \int _{\C} \pp   [\Pp\eH, R_A(z)]_{\circ} [\RH, \i A]_{\circ} \eH \Pp R_A(z) \ \dz, \\
I_2 &= \frac{\i}{2\pi} \int _{\C} \pp   [\Pp\eH, R_A(z)]_{\circ} [\RH, \i A]_{\circ}   \ \dz, \\
I_3 &= \frac{\i}{2\pi} \int _{\C} \pp   R_A(z) \Pp  \eH [\RH, \i A]_{\circ} [R_A(z), \Pp\eH]_{\circ} \ \dz. 
\end{align*}
Applying \eqref{dei} and Lemma \ref{didid2}, and recalling that $s<1$, we have for some operators $B_i$ uniformly bounded with respect to $L$ that
\begin{equation*}
I_i = \JapA ^{-s} \frac{B_i}{L} \JapA ^{-s}, \quad \text{for} \ i=1,2,3.
\end{equation*}
Using $\chi \eta = \eta$, we insert $\chiH$ next to $\eH$ . So far we get the following expression for $F$ :
\begin{align*}
F &= \frac{\i}{2\pi L} \int_{\C} \pp \Pp \tH  R_A(z) \Pp \chiH \underbrace{\eH [\RH, \i A]_{\circ} \eH}_{\text{to be developed}} \chiH \Pp R_A(z) \tH \Pp \ \dz \\
&\quad + \Pp \tH \JapA ^{-s} \left(\frac{B_1+B_2+B_3}{L^2} \right) \JapA ^{-s} \tH \Pp.
\end{align*}
Now write 
\begin{equation*}
\eH [\RH, \i A]_{\circ} \eH = \eH R [H, \i A]_{\circ} R \eH = \eH R [H_0, \i A]_{\circ} R \eH + \eH R [V, \i A]_{\circ} R \eH.
\end{equation*}
Let us start with the second term on the r.h.s.\ of this equation. It decomposes into  
\begin{align*}
\eH R [V , \i A]_{\circ} R \eH &= \eH \underbrace{R \langle H \rangle}_{\in \ \B(\H)} \langle H \rangle ^{-1/2}  \underbrace{\langle H \rangle ^{-1/2} \langle H_0 \rangle^{1/2}}_{\in \ \B(\H)} \underbrace{\langle H_0 \rangle ^{-1/2} [V , \i A]_{\circ} \langle H_0 \rangle ^{-1/2} }_{\in \ \K(\H) \ \text{by \ref{item:A6prime}}} \times \\
& \quad \times \underbrace{ \langle H_0 \rangle ^{1/2}\langle H \rangle ^{-1/2}}_{\in \ \B(\H)} \langle H \rangle ^{-1/2} \underbrace{\langle H \rangle R}_{\in \ \B(\H)}  \eH.
 \end{align*}
 It is therefore compact. As for the first term on the r.h.s., it decomposes as follows
\begin{equation*}
\eH R [H_0, \i A]_{\circ} R \eH = \eHz R_0 [H_0, \i A]_{\circ} R_0 \eHz + \Xi_1 + \Xi_2,
\end{equation*}
where 
\begin{equation*}
\Xi_1 := (\eH R -\eHz R_0) [H_0, \i A]_{\circ} R \eH \quad \text{and} \quad \Xi_2 := \eHz R_0 [H_0, \i A]_{\circ} (R \eH- R_0\eHz).
\end{equation*}
We show tht $\Xi_1$ is compact, and similarly one shows that $\Xi_2$ is compact. We have
\begin{equation*}
\Xi_1= \underbrace{(\eH R -\eHz R_0) \langle H_0 \rangle ^{1/2}}_{\in \ \K(\H)} \underbrace{\langle H_0 \rangle ^{-1/2} [H_0 , \i A]_{\circ} \langle H_0 \rangle ^{-1/2} }_{\in \ \B(\H) \ \text{by \ref{item:A2}}} \underbrace{ \langle H_0 \rangle ^{1/2}\langle H \rangle ^{-1/2}}_{\in \ \B(H)} \langle H \rangle ^{-1/2} \underbrace{\langle H \rangle R}_{\in \ \B(\H)} \eH.
\end{equation*}
Let us justify that $(\eH R -\eHz R_0) \langle H_0 \rangle ^{1/2}$ is compact. Let $ \kappa : x \mapsto x \eta(x)$. By the Helffer-Sj\"ostrand formula,
\begin{align*}
(\eH R -\eHz R_0) \langle H_0 \rangle ^{1/2} &=  \frac{\i}{2\pi} \int_{\C} \frac{\partial \tilde{\kappa}}{\partial \overline{z}} (z) \left( (z-R)^{-1} - (z-R_0)^{-1} \right) \langle H_0 \rangle ^{1/2} \ d z \wedge d\overline{z} \\
&=  \frac{\i}{2\pi} \int_{\C} \frac{\partial \tilde{\kappa}}{\partial \overline{z}} (z) (z-R)^{-1} RVR_0 (z-R_0)^{-1}  \langle H_0 \rangle ^{1/2} \ d z \wedge d\overline{z} \\
&= \frac{\i}{2\pi} \int_{\C} \frac{\partial \tilde{\kappa}}{\partial \overline{z}} (z)  (z-R)^{-1} \underbrace{R \langle H \rangle^{1/2}}_{\in \ \B(\H)}  \underbrace{\langle H \rangle ^{-1/2} \langle H_0 \rangle^{1/2}}_{\in \ \B(H)}  \times \\
& \quad \times \underbrace{\langle H_0 \rangle^{-1/2} V\langle H_0 \rangle^{-1/2}}_{\in \ \K(\H) \ \text{by \ref{item:A5}}} \underbrace{\langle H_0 \rangle^{1/2} R_0 \langle H_0 \rangle ^{1/2}}_{\in \ \B(\H)} (z-R_0)^{-1} \ d z \wedge d\overline{z}.
\end{align*}
The integrand of this integral is compact for all $z \in \C \setminus \R$, and moreover the integral converges in norm since $\kappa$ has compact support. It follows that $(\eH R -\eHz R_0) \langle H_0 \rangle ^{1/2}$, and thus $\Xi_1$, is compact. Thus we have shown that 
\begin{equation}
\label{KeyFormula1}
\eH [\RH, \i A]_{\circ} \eH = \eHz  [R_0, \i A]_{\circ}  \eHz + \text{compact}.
\end{equation}
Therefore there is a compact operator $K_1$ uniformly bounded in $L$ such that
\begin{align*}
F &= \constL \int_{\C} \pp \Pp \tH R_A(z)  M R_A(z) \tH \Pp \ \dz \\
&\quad +\Pp \tH \frac{K_1}{L}  \tH \Pp   \  + \  \Pp \tH \JapA ^{-s} \left(\frac{B_1+B_2+B_3}{L^2} \right) \JapA ^{-s} \tH \Pp.
\end{align*}
Here $M := \Pp \chiH \eHz [\RHz, \i A]_{\circ} \eHz \chiH \Pp$. Since $\Pp \chiH, \eta(R_0)$ and $[R_0,\i A]_{\circ}$ belong to $\CC^1(A)$, it follows by product that $M \in \CC^1(A)$ and we may commute $R_A(z)$ with $M$:
\begin{align*}
F &= \constL \int_{\C} \pp \Pp \tH R_A(z)^{2} M \tH \Pp \ \dz \\
&\quad+ \constL \int_{\C} \pp \Pp \tH R_A(z) [M, R_A(z)]_{\circ} \tH \Pp \ \dz \\
&\quad+\Pp \tH \frac{K_1}{L}  \tH \Pp   \  + \  \Pp \tH \JapA ^{-s} \left(\frac{B_1+B_2+B_3}{L^2} \right) \JapA ^{-s} \tH \Pp.
\end{align*}
We apply \eqref{derivative} to the first integral (which converges in norm), while for the second integral we use the fact that $M \in \CC^1(A)$ to conclude that there exists an operator $B_4$ uniformly bounded in $L$ such that 
\begin{align*}
F &= L^{-1} \Pp \tH \varphi'(A/L) M \tH \Pp \\
&\quad+\Pp \tH \frac{K_1}{L}  \tH \Pp   \  + \  \Pp \tH \JapA ^{-s} \left(\frac{B_1+B_2+B_3+B_4}{L^2} \right) \JapA ^{-s} \tH \Pp.
\end{align*}
Now $\varphi'(A/L) = \langle A/L \rangle ^{-2s}$. As a result of the Helffer-Sj\"{o}strand formula, \eqref{dei} and \eqref{use2},
\begin{equation*}
[\langle A/L \rangle ^{-s}, M ]_{\circ} \langle A/L \rangle ^{s} = L^{-1}B_5
\end{equation*} 
for some operator $B_5$ uniformly bounded in $L$. Thus commuting $\langle A/L \rangle ^{-s}$ and $M$ gives
\begin{align*}
F &= L^{-1} \Pp \tH \JapA ^{-s} M \JapA ^{-s} \tH \Pp \\
&\quad +\Pp \tH \frac{K_1}{L}  \tH \Pp   \  + \  \Pp \tH \JapA ^{-s} \left(\frac{B_1+B_2+B_3+B_4+B_5}{L^2} \right) \JapA ^{-s} \tH \Pp \\
& \geqslant c L^{-1} \Pp \tH \JapA ^{-s} \Pp \chiH \eHz^2 \chiH \Pp \JapA ^{-s} \tH \Pp  \\
&\quad +\Pp \tH \frac{K_1 + K_2}{L}  \tH \Pp   \  + \  \Pp \tH \JapA ^{-s} \left(\frac{B_1+B_2+B_3+B_4+B_5}{L^2} \right) \JapA ^{-s} \tH \Pp,
\end{align*}
where $c>0$ and $K_2$ come from applying the Mourre estimate \eqref{MourreEst} to $R_0$ on $f(\I)$. Exchanging $\eHz^2$ for $\eH^2$, we have a compact operator $K_3$ uniformly bounded in $L$ such that  
\begin{align*}
F & \geqslant c L^{-1} \Pp \tH \JapA ^{-s} \Pp \chiH \eH^2  \chiH \Pp \JapA ^{-s} \tH \Pp  \ + \ \Pp \tH \frac{K_1 +K_2+K_3}{L}  \tH \Pp  \\
&\quad+ \Pp \tH \JapA ^{-s} \left(\frac{B_1+B_2+B_3+B_4+B_5}{L^2} \right) \JapA ^{-s} \tH \Pp.
\end{align*}
We commute $\Pp \chiH \eH^2 \chiH \Pp = \Pp \eH^2 \Pp$ with $\langle A/L \rangle ^{-s}$, and see that 
\begin{equation*}
[\Pp \eH^2 \Pp,\langle A/L \rangle ^{-s}]_{\circ} \langle A/L \rangle ^{s} = L^{-1}B_6
\end{equation*}
for some operator $B_6$ uniformly bounded in $L$. Thus 
\begin{align*}
F & \geqslant c L^{-1} \Pp \tH \JapA ^{-2s} \tH \Pp \ + \ \Pp \tH \frac{K_1 +K_2+K_3}{L}  \tH \Pp \\
&  \quad +   \Pp \tH \JapA ^{-s} \left(\frac{B_1+B_2+B_3+B_4+B_5+B_6}{L^2} \right) \JapA ^{-s} \tH \Pp.
\end{align*}
Taking $L$ large enough gives $C>0$ such that $c + (B_1+B_2+B_3+B_4+B_5+B_6)/L \geqslant C$. Denoting $K := K_1+K_2+K_3$ yields formula \eqref{Formula1}.

\noindent \textbf{Part 3:} For all open intervals $(e_1,e_2)$ located strictly above or below $\varsigma$ we have the identity 
\begin{equation}
\label{ResolventProjector}
E_{(e_1,e_2)}(H) = E_{(f(e_1),f(e_2))}(R(\varsigma)),
\end{equation}
where $f$ is the function defined in \eqref{Functionf}. Now let $\mathcal{J} :=$ interior$(\theta^{-1}\{1\})$. This is an open interval and we have $E_{\mathcal{J}}(R) \theta(R) = E_\mathcal{J}(R)$. Thus applying $E_\mathcal{J}(R)$ to \eqref{Formula1} gives
\begin{equation*}
\Pp E_\mathcal{J}(R) [\RH, \i \varphi(A/L)]_{\circ} E_\mathcal{J}(R) \Pp \geqslant L^{-1} \Pp E_\mathcal{J}(R) \left(C \langle A/L \rangle ^{-2s} +K\right) E_\mathcal{J}(R) \Pp.
\end{equation*}
We have that $P_{\rm{c}} E_\mathcal{J}(R) := P_{\rm{c}}(R) E_\mathcal{J}(R)$ is a spectral projector of $R$ onto a finite disjoint union of open intervals. Let $\{\lambda_i\}$ be the (finite) collection of eigenvalues of $R$ located in $\mathcal{J}$. Then $\{f^{-1}(\lambda_i)\}$ are the eigenvalues of $H$ located in $f^{-1}(\mathcal{J})$, and by \eqref{ResolventProjector},
\begin{equation*}
P_{\rm{c}}(R) E_\mathcal{J}(R) = \sum_i E_{\mathcal{J}_i}(R) = \sum_i E_{f^{-1}(\mathcal{J}_i)}(H) = P_{\rm{c}}(H) E_{f^{-1}(\mathcal{J})}(H),
\end{equation*}
where the $\mathcal{J}_i$ are the open intervals such that $\cup_i \mathcal{J}_i \cup \{ \lambda_i\} = \mathcal{J}$. Denoting the open interval $J := f^{-1}(\mathcal{J})$ proves formula \eqref{Formula2}. Note that $\lambda \in J$.

\noindent \textbf{Part 4:} Let $F'$ be the l.h.s.\ of \eqref{Formula2}, i.e.\
\begin{equation*}
F' := P_{\rm{c}}(H) E_{J}(H) [R(\varsigma), \i \varphi(A/L)]_{\circ} E_{J}(H)  P_{\rm{c}}(H). 
\end{equation*}

\noindent Formula \eqref{Formula2} implies that for all $\psi \in \mathcal{H}$ and all $T >0$: 
\begin{align*}
\frac{L}{T} \int_0 ^T \langle e^{-\i tH} \psi, F' e^{-\i t H} \psi \rangle dt & \geqslant \frac{C}{T} \int_0 ^T \Big \| \langle A/ L \rangle ^{-s} E_{J}(H)  P_{\rm{c}}(H)   e^{-\i t H} \psi \Big \|^2 \ dt  + \\
&\quad+ \frac{1}{T} \int_0 ^T \langle E_{J}(H)  P_{\rm{c}}(H)   e^{-\i tH} \psi, K  E_{J}(H) P_{\rm{c}}(H)  e^{-\i t H} \psi \rangle \ dt.
\end{align*}
First, for all $L \geqslant 1$, 
\begin{equation*}
\frac{L}{T} \int_0 ^T e^{\i tH} F' e^{-\i t H} \ dt = \frac{L}{T} \big [ e^{\i tH}P_{\rm{c}}(H) E_{J}(H) R(\varsigma) \varphi(A/L) R(\varsigma) E_{J}(H)  P_{\rm{c}}(H) e^{-\i t H} \big]_0 ^T \xrightarrow[T \to \pm \infty]{} 0.
\end{equation*}
Second, by the RAGE Theorem \eqref{RAGEsup},
\begin{equation*}
\begin{split}
\sup \limits_{\| \psi\| \leqslant 1} \frac{1}{T} \int_0 ^T  \langle &  E_{J}(H)  P_{\rm{c}}(H)  e^{-\i tH} \psi,  K  E_{J}(H) P_{\rm{c}}(H) e^{-\i t H} \psi \rangle \ dt  \\
& \leqslant  \sup \limits_{\| \psi\| \leqslant 1} \frac{1}{T} \int_0 ^T \| K  E_{J}(H)  e^{-\i t H} P_{\rm{c}}(H) \psi \| \ dt \\
& \leqslant \sup \limits_{\| \psi\| \leqslant 1} \left(\frac{1}{T} \int_0 ^T \| K  E_{J}(H)  e^{-\i t H} P_{\rm{c}}(H) \psi \|^2 \ dt \right)^{1/2} \xrightarrow[T \to \pm \infty]{} 0.
\end{split}
\end{equation*}
It follows that for $L$ sufficiently large (but finite),
\begin{equation*}
\lim \limits_{T \to \pm \infty} \sup \limits_{\| \psi\| \leqslant 1} \frac{1}{T} \int_0 ^T  \bigg \| \JapA ^{-s}  e^{-\i t H}  P_{\rm{c}}(H)  E_{J}(H)  \psi \bigg \|^2 \ dt = 0.
\end{equation*}
Finally \eqref{Formula3} follows by noting that $\langle A \rangle^{-s} \langle A / L \rangle ^{s}$ is a bounded operator.  \qed

\section{A discussion about the compactness of operators of the form $\langle A \rangle ^{-s} E_{\I}(H)$}
\label{Section:Compactness}

As pointed out in the Introduction, the novelty of formula \eqref{NewFormula} is conditional on the non-relative compactness of the operator $\langle A \rangle ^{-s} E_{\I}(H)$. The non-compactness of $\langle A \rangle ^{-s} E_{\I}(H)$ is also what sets \eqref{NewFormula3} apart from \eqref{NewFormula4}. We start by noting that $\langle A \rangle ^{-s} E_{\I}(H)$ is $H$-relatively compact if and only if it is compact, since $\I \subset \R$ is a bounded interval. 

We will allow ourselves to consider operators of the form $\langle A \rangle ^{-s} \chi(H)$, where $\chi$ is a smooth function, rather than $\langle A \rangle ^{-s} E_{\I}(H)$. On the one hand, if $\langle A \rangle ^{-s} E_{\I}(H)$ is compact, then so is $\langle A \rangle ^{-s} \chi(H)$, where $\chi$ is any smooth function that has support contained in $\I$. On the other hand, if $\langle A \rangle ^{-s} \chi(H)$ is compact, where $\chi$ is a smooth bump function that approximates the characteristic function of $\I$ and equals one above $\I$, then so is $\langle A \rangle ^{-s} E_{\I}(H)$. 

We will also suppose that $H = H_0 + V$, where $V$ is some $H_0$-form compact operator, and $H_0$ is viewed as the "free" operator. In other words we will work under the assumption \ref{item:A5}.
The reason for doing so is that $H_0$ is much easier to work with than $H$ in practice. In this case we note that $\langle A \rangle ^{-s} \chi(H)$ is compact if and only if $\langle A \rangle ^{-s} \chi(H_0)$. We therefore have the question: Is $\langle A \rangle ^{-s} \chi(H_0)$ a compact operator? A first result is:\begin{proposition}
\label{propNoEigenvalues}
Let $H_0,A$ be self-adjoint operators in $\H$. Suppose that $H_0$ has a spectral gap. Suppose that $H_0 \in \CC^1(A)$ and that for some $\lambda \in \R$, $[(H_0-\lambda)^{-1},\i A]_{\circ} := C \geqslant 0$ is an injective operator. Then $A$ does not have any eigenvalues. In particular, $\langle A \rangle ^{-s} \not \in \K(\H)$ for any $s>0$.
\end{proposition}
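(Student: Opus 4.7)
The plan is to derive a virial-type identity against the operator $C$ for any hypothetical eigenvector of $A$, then use the positivity and injectivity of $C$ to preclude such an eigenvector from existing. The second assertion will then follow from the spectral theorem for compact self-adjoint operators.

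First I would observe that the spectral gap assumption ensures $\lambda$ lies in the resolvent set of $H_0$, so $R := (H_0 - \lambda)^{-1}$ is a bounded self-adjoint operator on $\H$. The assumption $H_0 \in \CC^1(A)$ yields $R \in \CC^1(A)$ by definition, hence the map $t \mapsto e^{-\i tA} R e^{\i tA}$ is of class $\CC^1(\R; \B(\H))$ in the strong operator topology, with derivative at $t=0$ equal to $[R, \i A]_{\circ} = C$. In particular, for every $\phi \in \H$ the scalar function $t \mapsto \langle \phi, e^{-\i tA} R e^{\i tA} \phi \rangle$ is of class $\CC^1(\R)$ with derivative $\langle \phi, C \phi \rangle$ at $t = 0$.

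Next, suppose for contradiction that $A$ admits an eigenvector $\psi \in \D(A) \setminus \{0\}$ with $A\psi = \mu \psi$ for some $\mu \in \R$. Then $e^{\i tA}\psi = e^{\i t \mu} \psi$ for all $t \in \R$, so
\[ \langle \psi, e^{-\i tA} R e^{\i tA} \psi \rangle = \langle \psi, R \psi \rangle \]
is constant in $t$. Differentiating at $t = 0$ yields $\langle \psi, C \psi \rangle = 0$. Since $C \geqslant 0$, this forces $\|C^{1/2} \psi\|^2 = 0$, so $\psi \in \ker(C^{1/2}) = \ker(C)$. The injectivity of $C$ then gives $\psi = 0$, a contradiction. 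Thus $A$ has no eigenvalues.

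For the second assertion, note that the hypotheses implicitly require $\H$ to be infinite-dimensional, since a self-adjoint operator on a nontrivial finite-dimensional space always has eigenvalues. If $\langle A \rangle^{-s}$ were compact for some $s > 0$, the spectral theorem for compact self-adjoint operators would produce a nonzero eigenvalue $\nu > 0$. Setting $a := \sqrt{\nu^{-2/s} - 1} \geqslant 0$, the corresponding eigenspace is exactly $E_{\{a, -a\}}(A) \H$, which is nonzero. Hence at least one of $\pm a$ would be an eigenvalue of $A$, contradicting the first part. The only delicate point will be the virial identity itself: one must invoke the strong $\CC^1$ regularity of the conjugated resolvent rather than try to commute $A$ directly with $R$, since $\psi$ need not lie in any convenient joint core; the remainder of the argument reduces to elementary spectral theory.
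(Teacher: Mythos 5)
Your proof is correct and takes a genuinely different route in both halves. For the first part, the paper observes that $A\in\CC^1\big((H_0-\lambda)^{-1}\big)$ and cites the abstract Virial Theorem \cite[Proposition~7.2.10]{ABG}, whereas you prove the needed virial identity from scratch: since $e^{\i tA}\psi=e^{\i t\mu}\psi$, the scalar $\langle\psi,e^{-\i tA}Re^{\i tA}\psi\rangle$ is constant, and the strong-$\CC^1$ hypothesis lets you differentiate at $t=0$ to get $\langle\psi,C\psi\rangle=0$; this is a clean, self-contained replacement that sidesteps any discussion of which operator plays the role of the conjugate. For the second part, the paper argues in several steps — it first excludes compact resolvent for $A$ (compact resolvent would force $\sigma(A)=\sigma_{\rm p}(A)=\emptyset$, impossible), then deduces $\langle A\rangle^{-1}\notin\K(\H)$, and finally passes from $s=1$ to general $s$ by a spectral decomposition of $\langle A\rangle^{-s}$. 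You instead go straight at $\langle A\rangle^{-s}$: if it were compact, it would have a nonzero eigenvalue $\nu$, and the spectral mapping identity $\ker(f(A)-\nu)=E_{f^{-1}(\{\nu\})}(A)\H$ with $f(x)=(1+x^2)^{-s/2}$ shows the eigenspace is $E_{\{\pm a\}}(A)\H$ with $a=\sqrt{\nu^{-2/s}-1}$, hence one of $\pm a$ is an eigenvalue of $A$, contradiction. This is shorter and avoids the detour through compact resolvents; it also quietly covers the degenerate finite-dimensional case as you note. Both arguments are sound; the paper's version has the advantage of making the connection to the compact-resolvent characterization explicit, while yours is more elementary and direct.
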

\begin{remark}
The examples of Section \ref{Section:Examples} satisfy the hypotheses of Proposition \ref{propNoEigenvalues}. The positivity of $C$ holds because $\sigma(H_0) \subset [0,+\infty)$. The injectivity holds because $0$ is not an eigenvalue of $H_0$. 
\end{remark}
\begin{proof}
Let $\psi$ be an eigenvector of $A$. Since $A \in \CC^1((H_0-\lambda)^{-1})$, the Virial Theorem (\cite[Proposition 7.2.10]{ABG}) says that $0= \langle \psi, [(H_0-\lambda)^{-1},\i A]_{\circ} \psi \rangle = \langle \psi, C \psi \rangle = \|\sqrt{C} \psi \|^2$. The injectivity of $\sqrt{C}$ forces $\psi = 0$, i.e.\ $\sigma_{\rm{p}}(A) = \emptyset$. Now, it is known that the spectrum of a self-adjoint operator with compact resolvent consists solely of isolated eigenvalues of finite multiplicity, see e.g.\ \cite[Theorem 6.29]{K}. So if $A$ had compact resolvent, then we would have $\sigma(A) = \sigma_{\rm{p}}(A) = \emptyset$. However this is not possible because the spectrum of a self-adjoint operator is non-empty. We conclude that $A$ does not have compact resolvent. Writing $(z-A)^{-1} = (z-A)^{-1} \langle A \rangle \langle A \rangle ^{-1}$, we infer that $\langle A \rangle ^{-1} \not \in \K(\H)$. Finally, consider the bounded self-adjoint operator $\langle A \rangle ^{-s}$ for some $s>0$. If this operator were compact, then by the spectral theorem for such operators we would have $\langle A \rangle ^{-s} = \sum_i \lambda_i \langle \phi_i, \cdot \rangle \phi_i$ for some eigenvalues $\{\lambda_i\}$ and eigenvectors $\{\phi_i\}$ which form an orthonormal basis of $\H$. But then $\langle A \rangle ^{-1} = \sum_i \lambda_i^{1/s} \langle \phi_i, \cdot \rangle \phi_i$, implying that the latter operator is compact. This contradiction proves $\langle A \rangle ^{-s} \not \in \K(\H)$ for all $s>0$. \qed
\end{proof}
Unfortunately, this result does not settle the debate because it does not guarantee the non-compactness of $\langle A \rangle ^{-s} \chi(H_0)$. In fact, we have examples where this operator is compact. For lack of a more robust result, we shall spend the rest of this section examining several examples. Our conclusion is that $\langle A \rangle ^{-s} \chi(H_0)$ is sometimes compact, sometimes not. Specifically, in each of our examples, the compactness holds in dimension one but does not in higher dimensions. To start off, we cook up a simple example that will reinforce the viewpoint that non-compactness is possible, especially in higher dimensions.

\begin{example} 
Let $\H := L^2(\R^2)$, $H_0:= -\partial^2/ \partial x_1^2$ and $A := -\i(x_1 \partial / \partial x_1 + \partial / \partial x_1 x_1)$ be a conjugate operator to $H_0$. The spectrum of $H_0$ is purely absolutely continuous and $\sigma(H_0) = [0,+\infty)$. In particular, $\ker(H_0-\lambda) = \emptyset$ for all $\lambda \in \R$. Also $[H_0, \i A]_{\circ} = 2H_0$ exists as a bounded operator from $\D(\langle H_0 \rangle ^{1/2})$ to $\D(\langle H_0 \rangle ^{1/2})^*$, implying that $H_0 \in \CC^{\infty}(A)$ and that the Mourre estimate holds for all positive intervals supported away from zero. In addition, $\{e^{\i tA}\}_{t \in \R}$ stabilizes $\D(H_0)$. The assumptions of Theorems \ref{Main2} and \ref{Main} are therefore thoroughly verified. Moreover $\langle A \rangle ^{-s} \chi (H_0)$ is clearly not compact in $L^2(\R^2)$. This can be seen by applying $\langle A \rangle ^{-s} \chi (H_0)$ to a sequence of functions $(f(x_1) g_n (x_2))_{n=1} ^{+\infty}$ with $g_n$ chosen so that $\int_{\R} |g_n(x_2)|^2 dx_2$ is constant.  

\end{example}

To continue with other examples, we set up notation. Let $\CC_0(\R)$ be the continuous functions vanishing at infinity and $\CC_c^{\infty}(\R)$ the compactly supported smooth functions.

\begin{example}
\label{ex:Derivative}
Let $\mathcal{H} := L^2(\R^d)$, $H_0 := x_1+...+x_d$ and $A:= \i(\partial / \partial x_1 +...+ \partial / \partial x_d)$. This system verifies the Mourre estimate at all energies thanks to commutator relation $[H_0,\i A]_{\circ} = d I$, and $H_0 \in \CC^{\infty}(A)$ holds. Although this system does not quite fall within the framework of this article because $H_0$ is not semi-bounded ($\sigma(H_0) =\R$), it conveys the idea that compactness holds only in dimension one: 
\end{example}

\begin{proposition}
\label{TwoDimensionKnownResult}
Let $H_0$ and $A$ be those from Example \ref{ex:Derivative}. Let $\chi \in \CC_0(\R)$ and $s \in \R$ be given. If $d=1$, then $\langle A \rangle ^{-s} \chi(H_0) \in \K(L^2(\R))$. If $d=2$, then $\langle A \rangle ^{-s} \chi(H_0) \not\in \K(L^2(\R^2))$.
\end{proposition}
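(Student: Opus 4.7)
The plan is to handle the one-dimensional case by direct appeal to the classical compactness criterion stated in Proposition \ref{KnownFourierDecay}, and to attack the two-dimensional case by introducing a rotated coordinate system in which the operator manifestly takes the form $B \otimes I$.

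\emph{The case $d=1$.} Here $A = -P_1$, where $P_1 := -\i\partial/\partial x_1$ is the momentum operator, and $H_0 = Q_1$ is the position operator. The operator in question then reads $\langle P_1 \rangle^{-s}\chi(Q_1)$. For $s > 0$ (which is the relevant range; the statement fails for $s \leq 0$), the function $\xi \mapsto \langle \xi \rangle^{-s}$ belongs to $\CC_0(\R)$, as does $\chi$ by assumption. Proposition \ref{KnownFourierDecay} yields that $\chi(Q_1)\langle P_1 \rangle^{-s}$ is compact on $L^2(\R)$, and taking adjoints (noting that $\chi(Q_1)^* = \overline{\chi}(Q_1)$ is again of the same form) gives the desired conclusion.

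\emph{The case $d=2$.} The key observation is that both $H_0$ and $A$ depend only on the single linear combination $x_1 + x_2$. The plan is to perform the orthogonal change of variables $y_1 := (x_1+x_2)/\sqrt{2}$, $y_2 := (x_1 - x_2)/\sqrt{2}$, implemented by a unitary $U$ on $L^2(\R^2)$. A direct computation, using $\partial_{x_1} + \partial_{x_2} = \sqrt{2}\,\partial_{y_1}$, gives $UH_0U^* = \sqrt{2}\,Q_{y_1}$ and $UAU^* = \sqrt{2}\,\i\partial_{y_1} = -\sqrt{2}\,P_{y_1}$. Under the tensor product identification $L^2(\R^2) \cong L^2(\R_{y_1}) \otimes L^2(\R_{y_2})$, the operator $\langle A \rangle^{-s}\chi(H_0)$ is then unitarily equivalent to $B \otimes I$, where $B := \langle \sqrt{2}\,P \rangle^{-s}\chi(\sqrt{2}\,Q)$ acts on $L^2(\R)$.

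To conclude non-compactness, it suffices to observe that $B \neq 0$ whenever $\chi \not\equiv 0$: indeed, the Fourier multiplier $\langle \sqrt{2}\,\xi \rangle^{-s}$ is injective (being a strictly positive function), so any $\varphi$ with $\chi(\sqrt{2}\,Q)\varphi \neq 0$ satisfies $B\varphi \neq 0$. Fixing such a $\varphi$ and an orthonormal basis $\{e_n\}_{n \geq 1}$ of $L^2(\R_{y_2})$, the orthonormal sequence $\psi_n := \varphi \otimes e_n$ tends weakly to zero, yet $\|(B \otimes I)\psi_n\| = \|B\varphi\|$ is a fixed positive constant for every $n$. No subsequence converges in norm, so $B \otimes I$, and hence $\langle A \rangle^{-s}\chi(H_0)$, fails to be compact.

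The one-dimensional case is essentially a one-line consequence of Proposition \ref{KnownFourierDecay}. The only piece of genuine content lies in the $d = 2$ case, and there the main (minor) obstacle is the bookkeeping needed to verify that in the rotated coordinates both $H_0$ and $A$ become functions of $y_1$ alone; once this decoupling is established, the tensor-factor criterion for non-compactness finishes the proof immediately.
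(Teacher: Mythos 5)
Both proofs dispatch the one-dimensional case identically: the operator is $\langle P_1 \rangle^{-s}\chi(Q_1)$, and Proposition~\ref{KnownFourierDecay} plus taking adjoints gives compactness. The substance is in $d=2$, and here your route is genuinely different from — and cleaner than — the paper's. The paper constructs an explicit normalized, weakly null sequence $\Psi_n(x,y) := n^{-1/2}\psi(x+y)\theta(y/n)$ supported in an oblique strip, then verifies with an induction that $\varphi_n^\nu := (A+\i)^\nu\Psi_n$ stays uniformly bounded in $n$ for each fixed $\nu$, concludes that $\chi(H_0)(A+\i)^{-\nu}$ is not compact (otherwise the image $\phi_n := \chi(H_0)\Psi_n$ would have a norm-convergent subsequence, forcing $\phi_n \to 0$, contradicting the lower bound $\|\phi_n\| \geq m\|\theta\|$), infers the statement for all $s \leq \nu$, and finishes by taking adjoints. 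You instead rotate to the coordinates $y_1 = (x_1+x_2)/\sqrt 2$, $y_2 = (x_1-x_2)/\sqrt 2$, under which $H_0 = \sqrt2\,Q_{y_1}$ and $A = \sqrt2\,\i\partial_{y_1}$ so that the operator becomes $B \otimes I$; then the elementary fact that $B \otimes I$ is compact only if $B=0$ (since $B \neq 0$ produces an orthonormal weakly null sequence $\varphi\otimes e_n$ with $\|(B\otimes I)(\varphi\otimes e_n)\| = \|B\varphi\| > 0$) finishes the proof in one stroke, with no need to control powers of $A+\i$ or to treat one $s$ at a time. In fact, after your rotation the paper's $\Psi_n$ is itself of tensor-product form $\varphi\otimes g_n$, so the two proofs share the same mechanism — but your presentation exposes the structure that the paper's computation obscures. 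The paper's more pedestrian template is likely chosen because it is recycled essentially verbatim in Propositions~\ref{dnotcompact} and~\ref{discretenotcompact}, where the level sets of $H_0$ are spheres or tori and no linear change of variables decouples a tensor factor. Two very minor caveats you already partly flagged: the $d=1$ claim as written holds only for $s>0$, and for $s<0$ (where $\langle A\rangle^{-s}$ is unbounded) the non-compactness assertion is trivially true but one should not invoke the tensor lemma without a word about domains; also, the non-compactness conclusion of course requires $\chi \not\equiv 0$, which the paper leaves implicit.
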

\begin{proof} The one-dimensional result is a classic, see Proposition \ref{KnownFourierDecay}. We prove the two-dimensional result. Let $\I(\lambda,r)$ denote the open interval centered at $\lambda \in \R$ and of radius $r >0$. Fix $\lambda$ and $r$ such that $\I(\lambda,r) \subset$ supp$(\chi)$. Then the function of two variables $\chi(x_1+x_2)$ has support containing the oblique strip $\cup _{t \in \I(\lambda,r)}\{ (s,t-s) : s \in \R \} \subset \R^2$. Let $\psi \in \CC^{\infty}_c(\R)$ be a bump function that equals one on $\I(\lambda,r)$ and zero on $\R \setminus \I(\lambda,2r)$. Let $\theta \in \CC^{\infty}_c(\R)$ be a bump function that equals one on $[-1,1]$ and zero on $\R \setminus [-2,2]$. Let $\Psi_n(x,y) := n^{-1/2} \psi(x+y) \theta(y/n)$. Then $\| \Psi_n\| \equiv \|\psi\| \| \theta \|$. Here $\|\cdot \|$ denotes the norm on $L^2(\R^2)$. Fix $\nu \in \N$, and let $\varphi ^{\nu}_n := (A+\i)^{\nu} \Psi_n$. For $\nu = 0$, clearly $\| \varphi^{\nu}_n\| = \|\Psi_n\|$ is uniformly bounded in $n$ and an easy induction proves it for all fixed values of $\nu \in \N$. Consider now $\phi_n := \chi(H_0) (A+\i)^{-\nu} \varphi^{\nu}_n = \chi(H_0) \Psi_n$. Since $\chi \in \CC_0(\R)$ and $\Psi_n \xrightarrow[]{w} 0$, $\phi_n \xrightarrow[]{w} 0$. 
If $\chi(H_0) (A+\i)^{-\nu} \in \K(L^2(\R^2))$ for some $\nu \in \N$, then the image of the ball $B(0,\sup_{n\geqslant 1} \|\varphi^{\nu}_n\|)$ by this operator is pre-compact in $L^2(\R^2)$, and so there exists $\phi \in L^2(\R^2)$ and a subsequence $(n_k)^{\infty}_{k=1}$ such that $\lim_{k \to +\infty} \|\phi_{n_k} -\phi\| =0$. Since $\phi_{n_k} \xrightarrow[]{w} 0$, it must be that $\phi=0$ since the strong and weak limits coincide and are unique. But this contradicts the fact that $\|\phi_{n_k}\| \geqslant \|\chi \mathbf{1}_{\I(\lambda,r)}(\cdot)\| \| \theta \|$ for all $k \geqslant 1$. So $\chi(H_0) (A+\i)^{-\nu} \not \in \K(L^2(\R^2))$, and this implies that $\chi(H_0) \langle A \rangle ^{-s} \not \in \K(L^2(\R^2))$ for all $s \leqslant \nu$. The result follows by taking adjoints.
\qed
\end{proof}

For what it is worth, we tweak Example \ref{ex:Derivative} to create a system that fits all the assumptions of this article. We state a variation of it and leave the details of the proof to the reader. 

\begin{example}
Let $\H := L^2(\R^d)$. Let $H_0$ be the operator of multiplication by $x_1 h(x_1)+...+x_d h(x_d)$, where $h \in \CC^{\infty}(\R)$ is a smooth version of the Heaviside function (which is zero below the origin, positive above the origin and strictly increasing). Then $\sigma(H_0) = [0,+\infty)$. In particular, $H_0$ is a positive operator. The conjugate operator is still $A:= \i(\partial / \partial x_1 +...+ \partial / \partial x_d)$. We have $H_0 \in \CC^{\infty}(A)$ and the Mourre estimate holds on all positive bounded intervals. One also verifies that $\{e^{\i tA}\}_{t \in \R}$ stabilizes $\D(H_0)$ (note that $\{e^{\i tA}\}_{t \in \R}$ is the group of translations on $L^2(\R^d)$). Assumptions \ref{item:A1} - \ref{item:A2} are verified. With regard to the compactness issue, Proposition \ref{TwoDimensionKnownResult} holds, but for the two-dimensional result, one must also assume that $\chi$ has non empty support in $(0,+\infty)$. 
\end{example}

Our next model is the continuous Laplacian. We refer to Section \ref{ex:3} for a description of the model. The situation is the same as with the preceding example: compactness in dimension one, non-compactness in higher dimensions. 

\begin{example} [Continuous Laplacian with generator of dilations]
\label{ex:contLaplacian}
Let $\H := L^2(\R^d)$, $H_0 := -\Delta$ be the Laplacian and $A := -\i (x\cdot \grad +\grad\cdot x)/2 = -\i (2x\cdot \grad +d)/2$ be the generator of dilations. We will be making use of the Fourier transform on $L^2(\R^d)$ given by 
\begin{equation}
\label{FourierTransform}
(\mathcal{F}\psi)(\xi) = (2\pi)^{-d/2} \int _{\R^d} \psi(x) e^{-\i \xi \cdot x} dx.
\end{equation}
Note that $\F A \F^{-1} = -A =  \sum_{i=1}^d \i (\xi_i \partial / \partial \xi_i +  \partial / \partial \xi_i \xi_i)/2$ and $\F H_0 \F^{-1} = |\xi|^2 := \sum_{i=1}^d \xi_i^2$.

\end{example}

\begin{proposition}
\label{Prop:DeltaA}
Let $H_0$ and $A$ be those from Example \ref{ex:contLaplacian}. We have $\tau(A)\chi(H_0)  \in \K(L^2(\R))$ for all $\tau,\chi \in \CC_0(\R)$, with $\chi$ supported away from zero.
\end{proposition}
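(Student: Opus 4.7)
The plan is to diagonalize both $H_0$ and $A$ by a sequence of unitary transformations, thereby reducing the statement to Proposition \ref{KnownFourierDecay}. The first step is to pass to the Fourier side, which turns $H_0$ into multiplication by $\xi^2$ and $A$ into $\tilde A := \i(\xi\partial_\xi + 1/2)$, a generator of dilations. Since the corresponding flow $(e^{\i t\tilde A}\psi)(\xi) = e^{-t/2}\psi(e^{-t}\xi)$ preserves the sign of $\xi$, both $\chi(\xi^2)$ and $\tau(\tilde A)$ preserve the decomposition $L^2(\R) = L^2(\R_-) \oplus L^2(\R_+)$, and it suffices to establish compactness of the restriction to the positive half-line (the negative case is handled identically via $\xi \mapsto -\xi$).

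On $L^2(\R_+)$ I would introduce the unitary $U\colon L^2(\R_+) \to L^2(\R)$ defined by $(U\psi)(u) := e^{u/2}\psi(e^u)$. A direct computation based on the substitution $\xi = e^u$ yields $U\tilde A U^{-1} = -P$, with $P := -\i\partial_u$, together with $U(\xi^2 \cdot)U^{-1} = e^{2Q}$, i.e.\ multiplication by $e^{2u}$. By functional calculus, the restriction of $\tau(\tilde A)\chi(\xi^2)$ to $L^2(\R_+)$ is thus unitarily equivalent to $\tau(-P)\chi(e^{2Q})$ acting on $L^2(\R)$.

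It remains to verify that this last operator is compact. The function $u \mapsto \chi(e^{2u})$ belongs to $\CC_0(\R)$: as $u \to +\infty$, $e^{2u} \to +\infty$ so $\chi(e^{2u}) \to 0$ since $\chi \in \CC_0(\R)$; as $u \to -\infty$, one has $e^{2u} \to 0$, and because $\chi$ is supported away from zero, $\chi(e^{2u})$ vanishes identically for all $u$ below some threshold. The function $\tau(-\cdot)$ obviously also lies in $\CC_0(\R)$. Applying Proposition \ref{KnownFourierDecay} to the adjoint $\overline{\chi}(e^{2Q})\overline{\tau}(-P)$ gives the compactness of $\tau(-P)\chi(e^{2Q})$, which concludes the proof. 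The only delicate point in the scheme is the hypothesis that $\chi$ vanishes near the origin: without it $\chi(e^{2u})$ would only be bounded as $u \to -\infty$, the reduction to Proposition \ref{KnownFourierDecay} would collapse, and indeed the counterexample of Proposition \ref{TwoDimensionKnownResult} shows that the higher-dimensional analogue genuinely fails because the change of variables $\xi \mapsto (|\xi|, \xi/|\xi|)$ diagonalizes $\tilde A$ only in the radial direction, leaving the angular variables out of reach.
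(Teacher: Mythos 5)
Your proof is correct, and it takes a genuinely different route from both of the paper's arguments. The paper's first proof is an algebraic ``equal modulo compacts'' manipulation of $(A+\i)^{-1}\chi(H_0)$ that repeatedly inserts and removes compact factors and needs a density/limiting argument to pass from $(A+\i)^{-1}$ and $\chi\in\CC^\infty_c$ to general $\tau,\chi\in\CC_0(\R)$; the second proof exhibits $\F(A-\i/2)^{-1}\chi(H_0)\F^{-1}$ as an explicit Hilbert--Schmidt integral transform, again followed by the same limiting argument. Your approach instead conjugates by the Mellin-type unitary $U\colon L^2(\R_+)\to L^2(\R)$, $(U\psi)(u)=e^{u/2}\psi(e^u)$, after splitting $L^2(\R)=L^2(\R_-)\oplus L^2(\R_+)$ (a decomposition that is indeed reduced by both $\tilde A$ and multiplication by $\chi(\xi^2)$, since the dilation flow preserves the sign of $\xi$, and the two half-lines are exchanged by the parity $\xi\mapsto -\xi$, which commutes with $\tilde A$). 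Under $U$ the pair $(\tilde A, \xi^2)$ becomes $(-P, e^{2Q})$, reducing the claim directly to Proposition \ref{KnownFourierDecay} for the pair of $\CC_0$ functions $\tau(-\cdot)$ and $\chi(e^{2\,\cdot})$ --- with no density argument and with a transparent explanation of why the ``support away from zero'' hypothesis is needed ($\chi(e^{2u})$ must vanish as $u\to-\infty$). The paper's second proof buys a quantitative Hilbert--Schmidt bound that your argument does not provide, but your reduction is more conceptual, handles $\tau,\chi\in\CC_0(\R)$ in one stroke, and exposes the mechanism that makes the multi-dimensional analogue fail.
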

\noindent \textit{First proof:}
Let $Q$ be the operator of multiplication by the variable $x$ and $P := -\i d/d x$. Let $\chi \in \CC_c^{\infty}(\R)$ be supported away from zero. Let $(A+\i)^{-1} \chi(H_0) =  (A+\i)^{-1}\chi_1(P)$, where $\chi_1 := \chi \circ \sigma$ and $\sigma(\xi)=\xi^2$. We implement a binary relation $\approx$ on $\B(L^2(\R))$ whereby two operators are equivalent if their difference is a compact operator. We have: 
\begin{align*}
(A+\i)^{-1}  \chi(H_0) &= (A+\i)^{-1}  \chi_1(P)  (Q+\i)(Q+\i)^{-1}  \\
& \approx (A+\i)^{-1}  \chi_1(P) Q(Q+\i)^{-1} \\
&\approx (A+\i)^{-1}  Q \chi_1(P)  (Q+\i)^{-1} \\
&= (A+\i)^{-1} (QP) \chi_2(P) (Q+\i)^{-1} \\
&\approx  (A+\i)^{-1} (A+\i)  \chi_2 (P) (Q+\i)^{-1} \approx 0.
\end{align*}
Note the use of Proposition \ref{TwoDimensionKnownResult} each time a compact operator was removed. In the third step we used that $[\chi_1(P),Q]_{\circ} (Q+\i)^{-1} = \chi_1'(P) (Q+\i)^{-1} \approx 0$. In the fourth step we took advantage of the fact that $\chi_1$ is supported away from zero to let $\chi_2(P) := P^{-1}\chi_1(P)$ and thereby allow to recreate $A:= (QP + PQ)/2 = QP -\i/2$. 

Thus we have shown that $(A+\i)^{-1} \chi(H_0) \in \K(L^2(\R))$. It follows that $(A-z)^{-1} \chi(H_0) \in \K(L^2(\R))$ for all $z \in \C \setminus \R$. Note that the functions $\{(x-z)^{-1} : z \in \C \setminus \R\}$ and $\CC_{c}^{\infty}(\R)$ are dense in $\CC_0(\R)$ with respect to the uniform norm. Since $H_0$ and $A$ are self-adjoint operators, they are unitarily equivalent to a multiplication operator by a real-valued function in some appropriate $L^2(M)$ space. The norm of a multiplication operator from $L^2(M)$ to $L^2(M)$ is equal to the uniform norm of the multiplication function. Two limiting arguments, one for the $H_0$ first and then one for $A_0$, or vice-versa, extends the compactness to $\tau(A)\chi(H_0)$ as in the statement of the Proposition. 
\qed

\noindent \textit{Second proof:}
We see that $\mathcal{F}(A - \i/2)^{-1}\chi(H_0)\mathcal{F}^{-1}$ is an integral transform acting in the momentum space as follows: 
\begin{equation*}
L^2(\R) \ni \varphi \mapsto (\mathcal{F}(A - \i/2)^{-1}\chi(H_0)\mathcal{F}^{-1} \varphi)(\xi) = \frac{\i}{\xi} \int _{0} ^{\xi} \chi(t^2) \varphi(t) dt \in L^2(\R).
\end{equation*}
The fact that $\chi$ is supported away from zero is crucial here. Moreover, if $\chi \in \CC^{\infty}_c(\R)$, then this integral transform is Hilbert-Schmidt and there is $c>0$ such that
\begin{equation*}
\|(A-\i /2)^{-1} \chi(H_0)\|_{HS}^2 = \int_{\R} \int_{\R} \mathbf{1}_{(0,\xi)}(t) \xi^{-2} |\chi(t^2)|^2 dt d\xi \leqslant c \|\chi\|^2_2.
\end{equation*}
In particular, $(A-\i /2)^{-1} \chi(H_0)$ is compact. One extends the compactness to operators of the form $\tau(A)\chi(H_0)$ as in the statement of the Proposition using the same limiting argument explained in the first proof.
\qed

To complete the one-dimensional picture, we mention for what it is worth that it is possible to show that $(A+\i)^{-1}\chi(H_0)  \not \in \K(L^2(\R))$ for any $\chi \in \CC^{\infty}_c(\R)$ with $\chi(0) \neq 0$. We now turn to the multi-dimensional case.

\begin{proposition}
\label{dnotcompact}
Let $H_0$ and $A$ be those from Example \ref{ex:contLaplacian}. If $d \geqslant 2$, then $\langle A \rangle ^{-s} \chi(H_0) \not \in \K(L^2(\R^d))$ for any $\chi \in \CC^{\infty}_c(\R)$ whose support is non-empty in $(0,+\infty)$ and for any $s \in \R$. 
\end{proposition}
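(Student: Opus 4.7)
The strategy is to pass to Fourier space, exploit rotational symmetry, and rule out compactness by exhibiting an orthogonal sequence of vectors whose image under the operator has constant positive norm. The obstruction to compactness comes from the infinite dimensionality of $L^2(S^{d-1})$, which fails in dimension one but holds as soon as $d \ge 2$. By Example \ref{ex:contLaplacian}, the Fourier transform $\F$ of \eqref{FourierTransform} conjugates $H_0$ to multiplication by $|\xi|^2$ and $A$ to $-A_\xi$, where $A_\xi$ is the dilation generator in the variable $\xi \in \R^d$. Since $\F$ is unitary and $\langle\cdot\rangle^{-s}$ is even, it suffices to prove that $\tilde T := \langle A_\xi\rangle^{-s}\chi(|\xi|^2)$ is not compact on $L^2(\R^d_\xi)$. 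Both factors of $\tilde T$ commute with rotations of $\R^d_\xi$, so $\tilde T$ preserves the spherical harmonic decomposition $L^2(\R^d_\xi) \cong L^2(\R_+, r^{d-1}dr) \otimes L^2(S^{d-1})$; moreover, since every spherical harmonic $Y(\xi/|\xi|)$ is homogeneous of degree zero in $\xi$, the operator $A_\xi$ acts only on the radial factor. Writing $\xi = r\omega$, this gives, for every radial $g$ and every $Y \in L^2(S^{d-1})$,
\[
\tilde T\bigl[g(r)Y(\omega)\bigr] \;=\; (B g)(r)\,Y(\omega), \qquad B := \bigl\langle -\i(r\partial_r + d/2)\bigr\rangle^{-s}\chi(r^2),
\]
with $B$ a single radial operator on $L^2(\R_+, r^{d-1}dr)$ that does not see the factor $Y$.

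\noindent Next I will produce an explicit radial profile $g$ with $Bg\neq 0$. Because $\chi$ is continuous and has nonempty support in $(0,+\infty)$, there exist an interval $[a,b]\subset (0,+\infty)$ and $\epsilon>0$ with $|\chi(r^2)| \ge \epsilon$ on $[a,b]$. Choose any nonzero $g \in \CC_c^\infty((0,+\infty))$ with $\supp g \subset [a,b]$. Then $\chi(r^2) g(r)$ is smooth, compactly supported away from $r=0$, and in particular lies in the domain of $\langle -\i(r\partial_r+d/2)\rangle^{-s}$ for every $s \in \R$. Because $\langle\cdot\rangle^{-s}$ is a strictly positive Borel function on $\R$, it defines an injective operator by functional calculus, so $Bg \ne 0$ and $c := \|Bg\|_{L^2(\R_+, r^{d-1}dr)}>0$.

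\noindent Finally, pick an orthonormal sequence $\{Y_n\}_{n \ge 1}$ of spherical harmonics in $L^2(S^{d-1})$, which exists precisely because $d\ge 2$, and set $\psi_n(\xi) := g(|\xi|)Y_n(\xi/|\xi|)$; this is well-defined since $g$ vanishes near $r=0$. The $\psi_n$ are mutually orthogonal in $L^2(\R^d_\xi)$ with common norm $\|g\|_{L^2(\R_+, r^{d-1}dr)}$, so $\psi_n \rightharpoonup 0$ weakly, while the factorization above yields $\tilde T \psi_n = (Bg)(r) Y_n(\omega)$, again an orthogonal family of constant norm $c>0$. A compact operator must map a weakly null sequence to a norm-null sequence, which contradicts $\|\tilde T\psi_n\| \equiv c$. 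Hence $\tilde T$, and therefore $T$, is not compact.

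\noindent The main technical point I expect to require care, rather than new ideas, is the rigorous justification of the tensor factorization $\tilde T[gY]=(Bg)Y$ beyond the formal level. This relies on the spherical harmonic decomposition of $L^2(\R^d)$ combined with the identification of $A_\xi$, restricted to any spherical harmonic block, with the dilation operator $-\i(r\partial_r + d/2)$ acting on $L^2(\R_+, r^{d-1}dr)$; smooth compactly supported radial profiles form a common core, which allows one to pass from a formal computation to the operator-theoretic identity. For $s \le 0$, where $\tilde T$ may fail to be bounded, the argument is unaffected because the constructed $\psi_n$ still lie in $\D(\tilde T)$ and the orthogonal images of constant positive norm still preclude compactness.
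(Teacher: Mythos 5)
Your proof is correct, and it takes a genuinely different route from the paper's. The paper works directly in Fourier variables with an explicit concentration sequence $\Psi_n(\xi_1,\xi_2) = \sqrt{n}\,\psi_1(n\xi_1)\psi_2(\xi_2)$ supported in the annulus where $\chi(|\xi|^2)$ is bounded below, then shows that $\|(A+\i)^\nu\Psi_n\|$ is uniformly bounded in $n$ for each fixed $\nu$, so that $\chi(H_0)(A+\i)^{-\nu}$ cannot send a weakly null bounded sequence to a strongly null one. You instead exploit the $SO(d)$-invariance of both factors: since the dilation generator and $\chi(|\xi|^2)$ preserve each spherical-harmonic sector $L^2(\R_+,r^{d-1}dr)\otimes\C Y$ and act there by the \emph{same} radial operator $B$, a single radial profile $g$ with $Bg\neq 0$ tensored against an orthonormal sequence of spherical harmonics already defeats compactness. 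Your argument is cleaner and pinpoints the structural reason the dichotomy between $d=1$ and $d\geqslant 2$ occurs — $L^2(S^{d-1})$ is infinite-dimensional precisely when $d\geqslant 2$ — whereas the paper's avoids symmetry reduction and therefore transfers with only cosmetic changes to the discrete Laplacian on $\ell^2(\Z^d)$ (Proposition \ref{discretenotcompact}) and to Nakamura's conjugate operator, where full rotational invariance is absent and your argument would need nontrivial adaptation.

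Two small points worth making explicit when writing this up. First, the identification of $A_\xi$ restricted to a spherical-harmonic sector with $-\i(r\partial_r+d/2)$ on $L^2(\R_+,r^{d-1}dr)$ should be routed through the dilation group $(e^{\i tA_\xi}\psi)(\xi)=e^{td/2}\psi(e^t\xi)$, which manifestly preserves each sector and acts there by $g(r)\mapsto e^{td/2}g(e^tr)$ independently of the harmonic $Y$; this makes the claim that $B$ is the same operator on every sector an operator-theoretic identity rather than a formal one. Second, for $s\leqslant 0$ the operator $\langle A\rangle^{-s}\chi(H_0)$ may simply fail to be bounded, in which case it is trivially non-compact; your remark that $\chi(r^2)g$ lies in $\D(\langle A_r\rangle^{\alpha})$ for all $\alpha$ (e.g.\ via the Mellin transform, under which smooth compactly supported functions on $(0,\infty)$ have rapidly decaying images) closes this case, but it deserves a sentence rather than a clause.
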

\begin{proof}
Let $\I(\lambda, r)$ denote the interval of radius $r>0$ centered at $\lambda$. There are $\lambda \in (0,+\infty)$ and $r>0$ such that $\I(\lambda,r) \subset (0,+ \infty)$ and $m := \inf_{x \in \I(\lambda,r)} |\chi(x)| >0$.  Consider the constant energy curves
\[ \{(\xi_1,...,\xi_d) \in \R^d : E = \xi_1^2+...+\xi_d^2 \}. \]
For $d=2$, these are just circles centered at the origin. Forth we work in dimension two to keep the notation clean, but the necessary adjustments are obvious for $d \geqslant 2$. The support of the function of two variables $\chi(\xi_1^2+\xi_2^2)$ contains the annulus obtained by rotating $\I (\lambda',r')$ about the origin, where
\begin{equation*}
\lambda' := (\sqrt{\lambda+r}+\sqrt{\lambda-r})/2, \quad r' := (\sqrt{\lambda+r}-\sqrt{\lambda-r})/2.
\end{equation*}
Let $\psi_1,\psi_2 \in \CC_c^{\infty}(\R)$ be any bump functions verifying : a) $\psi_1(0) \neq 0$, b) supp($\psi_1$) $= [-1,1]$, c) supp($\psi_2$) $\subset \I (\lambda', r'/2)$, and d) $\|\psi_i \|= 1$, where $\| \cdot \|$ denotes the $L^2$ norm. Now let $\Psi_n(\xi_1,\xi_2) := \sqrt{n} \psi_1(\xi_1 n) \psi_2(\xi_2)$. Then $\| \Psi_n\|=1$ for all $n \geqslant 1$, and $\Psi_n \xrightarrow[]{w} 0$. Also, for $n$ sufficiently large, $\Psi_n$ is supported in the aforementioned annulus. Now fix $\nu \in \N$ and let $\varphi_n^{\nu} := \F(A+\i)^{\nu}\F^{-1} \Psi_n$.  Then for $\nu =0$, $\|\varphi_n ^{\nu}\| = \| \Psi_n \| \equiv 1$, while for $\nu = 1$,
\begin{equation*}
\varphi_n^{\nu}(\xi_1,\xi_2) = -2n^{3/2}\i \xi_1\psi_1'(\xi_1 n)\psi_2(\xi_2) - 2n^{1/2} \i \psi_1(\xi_1 n) \xi_2 \psi_2'(\xi_2) - \i \Psi_n(x),
\end{equation*}
and we see that $\|\varphi_n^{\nu}\|$ is uniformly bounded in $n$. A simple induction on $\nu$ shows that for every fixed value of $\nu \in \N$, $\|\varphi_n ^{\nu}\|$ is uniformly bounded in $n$. Consider $\phi_n := \F \chi(H_0)(A+\i)^{-\nu} \F^{-1} \varphi_n ^{\nu} = \F \chi(H_0) \F^{-1} \Psi_n$.
If $\F \chi(H_0)(A+\i)^{-\nu}\F^{-1} \in \K(L^2(\R^2))$ for some value of $\nu$, the image of the ball $B(0, \sup_{n\geqslant 1} \|\varphi_n ^{\nu}\|)$ by this operator is pre-compact in $L^2(\R^2)$, and so there exists $\phi \in L^2(\R^2)$ and a subsequence $(n_k)_{k=1}^{\infty}$ such that $\lim_{k \to +\infty} \|\phi_{n_k} - \phi\| =0$. Since $\phi_{n_k} \xrightarrow[]{w} 0$, it must be that $\phi = 0$ since the strong and weak limits coincide and are unique. But this contradicts the fact that $\|\phi_{n_k} \| \geqslant  m \|\Psi_{n_k} \| = m > 0$ for all $k\geqslant 1$.  So $\chi(H_0)(A+\i)^{-\nu} \not \in \K(L^2(\R^2))$ and this implies that $\chi(H_0)\langle A \rangle ^{-s} \not \in \K(L^2(\R^2))$ for all $s \leqslant \nu$. The result follows by taking adjoints.
\qed
\end{proof}
A nice corollary of Proposition \ref{dnotcompact} that deserves a mention is the following. It uses Proposition \ref{KnownFourierDecay}. The result can also be proven to hold in dimension one.
\begin{corollary} 
Let $A$ be that from Example \ref{ex:contLaplacian}. Let $d \geqslant 2$. Then for all $(s,\epsilon) \in \R \times (0,+\infty)$, $\langle A \rangle ^{-s} \langle Q \rangle ^{\epsilon} \not \in \B(L^2(\R^d))$. 
\end{corollary}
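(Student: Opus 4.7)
The plan is a short proof by contradiction that combines Propositions \ref{KnownFourierDecay} and \ref{dnotcompact}. Suppose that, for some $s \in \R$ and some $\epsilon > 0$, the operator $\langle A \rangle^{-s} \langle Q \rangle^{\epsilon}$ belongs to $\B(L^2(\R^d))$. I would then pick any $\chi \in \CC_c^{\infty}(\R)$ whose support is non-empty and contained in $(0,+\infty)$; such a $\chi$ satisfies the hypotheses of Proposition \ref{dnotcompact}.

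The key step is the factorization
\[
\langle A \rangle^{-s} \chi(H_0) \;=\; \bigl(\langle A \rangle^{-s} \langle Q \rangle^{\epsilon}\bigr)\,\bigl(\langle Q \rangle^{-\epsilon} \chi(H_0)\bigr).
\]
Since $H_0 = -\Delta$, functional calculus yields $\chi(H_0) = (\chi \circ \sigma)(P)$, where $\sigma(\xi) := |\xi|^2$. The function $\chi \circ \sigma$ is a bounded Borel function on $\R^d$ that vanishes at infinity (indeed, it has compact support in $\R^d$), and $x \mapsto \langle x \rangle^{-\epsilon}$ also vanishes at infinity as $\epsilon > 0$. Proposition \ref{KnownFourierDecay} therefore gives $\langle Q \rangle^{-\epsilon} \chi(H_0) \in \K(L^2(\R^d))$. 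Consequently $\langle A \rangle^{-s} \chi(H_0)$ would be the product of a bounded operator and a compact operator, hence itself compact.

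This directly contradicts Proposition \ref{dnotcompact}, which asserts precisely that $\langle A \rangle^{-s} \chi(H_0) \notin \K(L^2(\R^d))$ when $d \geqslant 2$ and $\chi$ is of the type chosen above. The contradiction forces $\langle A \rangle^{-s} \langle Q \rangle^{\epsilon} \notin \B(L^2(\R^d))$, completing the argument. There is no significant obstacle to overcome here: the only points to verify carefully are the choice of $\chi$ so that both cited propositions apply (which only requires non-empty compact support inside $(0,+\infty)$) and the vanishing-at-infinity of $\chi \circ \sigma$ on $\R^d$ needed to invoke Proposition \ref{KnownFourierDecay}; both are immediate.
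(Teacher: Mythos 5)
Your argument is correct and is precisely the one the paper intends: the text introducing the corollary says explicitly that it is a consequence of Proposition \ref{dnotcompact} together with Proposition \ref{KnownFourierDecay}, which is exactly the contradiction/factorization you set up. The only minor point worth keeping in mind, which you handle implicitly, is that the factorization $\langle A\rangle^{-s}\chi(H_0) = \bigl(\langle A\rangle^{-s}\langle Q\rangle^{\epsilon}\bigr)\bigl(\langle Q\rangle^{-\epsilon}\chi(H_0)\bigr)$ is valid on all of $L^2(\R^d)$ because $\langle Q\rangle^{-\epsilon}\chi(H_0)\psi$ always lies in $\D(\langle Q\rangle^{\epsilon})$.
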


\begin{example} [Continuous Laplacian with Nakamura's conjugate operator]
In \cite{N}, Nakamura presents an alternate conjugate operator to the continuous Laplacian $H_0$. Let $\beta > 0$. In momentum space it reads
\[ \F A \F^{-1} := \frac{\i}{2\beta} \sum _{i=1}^d \left( \sin(\beta \xi_i) \frac{\partial}{\partial \xi_i} + \frac{\partial}{\partial \xi_i} \sin(\beta \xi_i)  \right).\]
Under some conditions on the potential $V$, it is shown that the Mourre theory holds for $H := H_0 + V$ with respect to $A$ on the interval $(0, (\pi/\beta)^2/2)$. We refer also to \cite{Ma} for a generalization of this conjugate operator and a more in-depth discussion. An argument as in Propositions \ref{dnotcompact} and \ref{discretenotcompact} shows that, for $d \geqslant 2$, $\langle A \rangle ^{-s} \chi(H_0) \not \in \K(L^2(\R^d))$ for all $\chi \in \CC_0(\R)$ and $s \in \R$.
\end{example}

Our last example is the discrete Laplacian on $\Z^d$. We refer to Section \ref{ex:2} for the details on the model.
\begin{example} [Discrete Schr\"odinger operators]
\label{ex:discSchro}
Let $\H := \ell^2(\Z^d)$, $H_0 := \Delta$ be the discrete Laplacian and $A$ be its conjugate operator as in Example \ref{ex:2}. Let 
\begin{equation*}
\ell^2(\Z^d) \ni u \mapsto (\F u)(\theta) = (2\pi)^{-d/2} \sum_{n \in \Z^d} u(n) e^{\i \theta \cdot n} \in L^2([-\pi,\pi]^d)
\end{equation*} 
be the Fourier transform. We recall that $H_0$ is unitarily equivalent to the operator of multiplication by $\sum_{i=1}^d (2-2\cos(\theta_i))$ and that $A$ is unitarily equivalent to the self-adjoint realization of the operator $\i  \sum _{i=1} ^d (\sin(\theta_i) \partial / \partial \theta_i + \partial / \partial \theta_i \sin(\theta_i))$, which we denote by $A_{\F}$. 
\end{example}

\begin{proposition} 
Let $H_0$ and $A$ be those from Example \ref{ex:discSchro}. If $d=1$, then $\tau (A) \chi (H_0) \in \K(\ell^2(\Z))$ for all $\tau \in \CC_0(\R)$ and $\chi \in \CC([0,4])$ supported away from zero and four. 
\end{proposition}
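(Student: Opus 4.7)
\textit{Plan of proof.} The strategy is to pass to the Fourier picture, reduce the question to two half-intervals on which $A$ becomes (up to a constant multiple) the momentum operator via an explicit change of variables, and then invoke Proposition \ref{KnownFourierDecay}.

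First, after Fourier transform (see Example \ref{ex:discSchro}), the assertion becomes that $\tau(A_\F)\chi(m)$ is compact on $L^2([-\pi,\pi])$, where $m(\theta) := 2 - 2\cos(\theta)$ and $A_\F$ is the self-adjoint realization of
\[ \i\bigl(\sin(\theta)\partial_\theta + \partial_\theta \sin(\theta)\bigr) = 2\i\sin(\theta)\partial_\theta + \i\cos(\theta). \]
Since $\chi$ vanishes in neighborhoods of both $0$ and $4$, the function $\chi \circ m$ vanishes in neighborhoods of $\{0, \pm\pi\}$, which are precisely the zeros of $\sin(\theta)$. Both $\chi(m)$ and (as the change of variables below makes manifest) $A_\F$ respect the orthogonal decomposition $L^2([-\pi,\pi]) = L^2((-\pi,0)) \oplus L^2((0,\pi))$, and so it suffices to prove compactness on each half-interval separately.

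On $(0,\pi)$ I would perform the change of variable $s = \ln\tan(\theta/2)$, a smooth diffeomorphism onto $\R$ satisfying $\partial_s = \sin(\theta)\partial_\theta$. The associated unitary $U \colon L^2((0,\pi), d\theta) \to L^2(\R, ds)$, defined by $(Uf)(s) := \sqrt{\sin(\theta(s))}\,f(\theta(s))$, satisfies
\[ U A_\F U^{-1} = 2\i\partial_s \qquad \text{and} \qquad U m(\theta) U^{-1} = 2 + 2\tanh(s), \]
as a direct computation confirms; the key cancellation is that the zeroth-order term $\i\cos(\theta)$ in $A_\F$ is exactly annihilated by the contribution arising from differentiating $\sqrt{\sin(\theta(s))}$ in the conjugation. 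Consequently, on this half-interval, $\tau(A_\F)\chi(m)$ is unitarily equivalent to $\tau(2\i\partial_s)\cdot\chi(2+2\tanh(s))$ on $L^2(\R)$, which is a function of the momentum operator vanishing at infinity composed with a multiplication operator of compact support (by the hypothesis on $\chi$).

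An application of Proposition \ref{KnownFourierDecay} then yields compactness on $L^2((0,\pi))$. The analysis on $(-\pi,0)$ is carried out with $s = \ln|\tan(\theta/2)|$ and is entirely analogous. Summing the two contributions gives the result. The main technical point is the unitary diagonalization $UA_\F U^{-1} = 2\i\partial_s$; once this computation is in hand, the conclusion is a direct appeal to the classical compactness statement already recorded in the paper.
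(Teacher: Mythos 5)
Your proof is correct, and it is genuinely different from the one in the paper. The paper solves the first-order ODE $(A_\F + \i)u = \chi(m)\psi$ explicitly and exhibits $\chi(H_0)(A+\i)^{-1}$ as a Hilbert--Schmidt integral transform with kernel
\[
\frac{1}{2\i\sin(\theta/2)}\,\mathbf{1}_{(0,\theta)}(t)\,\frac{\sin(t/2)}{\sin(t)}\,\chi(2-2\cos t),
\]
and then (implicitly, via the density argument used earlier for the continuous Laplacian) extends from $\tau(x)=(x+\i)^{-1}$ to arbitrary $\tau\in\CC_0(\R)$. You instead diagonalize $A_\F$ outright by the Liouville substitution $s=\ln|\tan(\theta/2)|$, identifying $A_\F\rest{L^2((0,\pi))}$ with $2\i\partial_s = -2P$ on $L^2(\R)$ and $m$ with $2+2\tanh(s)$. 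Your computation of $UA_\F U^{-1}$ is correct (the $\i\cos\theta$ zeroth-order term is exactly absorbed by differentiating $\sqrt{\sin\theta}$), and the endpoint correspondence $\theta\to 0^+\ \leftrightarrow\ s\to-\infty$, $\theta\to\pi^-\ \leftrightarrow\ s\to+\infty$ converts the hypothesis on $\chi$ into compact support of $\chi(2+2\tanh\cdot)$. The question then reduces to the compactness of $g(P)h(Q)$ with $g=\tau(-2\,\cdot)\in\CC_0(\R)$ and $h$ compactly supported, which is Proposition~\ref{KnownFourierDecay}. Both arguments ultimately rest on the method of characteristics for the same first-order operator, but your unitary equivalence handles arbitrary $\tau\in\CC_0(\R)$ in one stroke, with no need for the auxiliary $\CC_0$-density step.

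One point worth stating more carefully: you assert that $A_\F$ ``respects'' the decomposition $L^2([-\pi,\pi]) = L^2((-\pi,0))\oplus L^2((0,\pi))$, but the change of variables alone does not make this manifest for the self-adjoint realization. The clean justification is dynamical: $e^{\i tA_\F}$ is the transport along the flow of the vector field $2\sin\theta\,\partial_\theta$ (weighted by the Jacobian factor), and since the zeros $\theta=0,\pm\pi$ of $\sin\theta$ are fixed points, this flow preserves each of $(0,\pi)$ and $(-\pi,0)$. Hence $e^{\i tA_\F}$ commutes with $\mathbf{1}_{(0,\pi)}(\Theta)$, and so do the spectral projections of $A_\F$ and the operators $\tau(A_\F)$. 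This also shows that $U$ intertwines the one-parameter groups $e^{\i tA_\F}\rest{L^2((0,\pi))}$ and translation on $L^2(\R)$, which is what licenses passing from the formal identity $UA_\F U^{-1}=2\i\partial_s$ on smooth functions to the equality of the associated self-adjoint operators. With this remark added, the proof is complete.
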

\begin{proof}
Using simple techniques from the theory of first order differential equations, we see that $\chi(H_0) (A+\i)^{-1}$ is a Hilbert-Schmidt integral transform acting as follows:
\begin{equation*}
L^2([-\pi,\pi]) \ni \psi \mapsto (\F \chi(H) (A+\i)^{-1} \F^{-1}\psi)(\theta) = \frac{1}{2\i \sin(\theta/2)} \int _0 ^{\theta} \frac{\sin(t/2)}{\sin(t)} \chi \left(2-2\cos(t)\right) \psi(t) dt.
\end{equation*} 
Note that it is crucial that $\chi(2-2\cos(t))$ be supported away from zero and $\pm \pi$. 
\qed
\end{proof}



\begin{figure}[ht]
\begin{tikzpicture}[domain=-5:5]
\begin{axis}
[grid = major, 
clip = true, 
clip mode=individual, 
axis x line = middle, 
axis y line = middle, 
xlabel={$\theta_1$}, 
xlabel style={at=(current axis.right of origin), anchor=west}, 
ylabel={$\theta_2$}, 
ylabel style={at=(current axis.above origin), anchor=south}, 
domain = -3.14152965:3.14152965, 
xmin = -3.3, 
xmax = 3.3, 
ymin = -3.3, 
ymax = 3.3, 
y=1cm,
x=1cm,
xtick={-3.14159, -1.5708, 1.5708, 3.14159},
xticklabels={$-\pi$, $-\frac{\pi}{2}$, $\frac{\pi}{2}$, $\pi$},
ytick={-3.14159, -1.5708, 1.5708, 3.14159},
yticklabels={$-\pi$, $-\frac{\pi}{2}$, $\frac{\pi}{2}$, $\pi$},
after end axis/.code={\path (axis cs:0,0) node [anchor=north east,yshift=-0.075cm] {0} ;}]
    \addplot[samples=200, domain = -1.04719755:1.04719755, color = red] {acos(2-cos(deg(x))-0.5)*3.14159265/180};
    \addplot[samples=200, domain = -1.04719755:1.04719755, color = red] {-acos(2-cos(deg(x))-0.5)*3.14159265/180};
  
    \addplot[samples=200, domain = -1.570796:1.570796, color = red] {acos(2-cos(deg(x))-1)*3.14159265/180};
    \addplot[samples=200, domain = -1.570796:1.570796, color = red] {-acos(2-cos(deg(x))-1)*3.14159265/180};
    
    \addplot[samples=200, domain = -2.0943951:2.0943951, color = red] {acos(2-cos(deg(x))-1.5)*3.14159265/180};
    \addplot[samples=200, domain = -2.0943951:2.0943951, color = red] {-acos(2-cos(deg(x))-1.5)*3.14159265/180};
    
    \addplot[samples=200, domain = -3.14159265:3.14159265, color = red] {acos(2-cos(deg(x))-2)*3.14159265/180};
    \addplot[samples=200, domain = -3.14159265:3.14159265, color = red] {-acos(2-cos(deg(x))-2)*3.14159265/180};
    
    \addplot[samples=200, domain = 1.04719755:3.14159265, color = red] {acos(2-cos(deg(x))-2.5)*3.14159265/180};
    \addplot[samples=200, domain = 1.04719755:3.14159265, color = red] {-acos(2-cos(deg(x))-2.5)*3.14159265/180};
    \addplot[samples=200, domain = -3.14159265:-1.04719755, color = red] {acos(2-cos(deg(x))-2.5)*3.14159265/180};
    \addplot[samples=200, domain = -3.14159265:-1.04719755, color = red] {-acos(2-cos(deg(x))-2.5)*3.14159265/180};
    
    \addplot[samples=200, domain = 1.570796:3.14159265, color = red] {acos(2-cos(deg(x))-3)*3.14159265/180};
    \addplot[samples=200, domain = 1.570796:3.14159265, color = red] {-acos(2-cos(deg(x))-3)*3.14159265/180};
    \addplot[samples=200, domain = -3.14159265:-1.570796, color = red] {acos(2-cos(deg(x))-3)*3.14159265/180};
    \addplot[samples=200, domain = -3.14159265:-1.570796, color = red] {-acos(2-cos(deg(x))-3)*3.14159265/180};
    
    \addplot[samples=200, domain = 2.0943951:3.14159265, color = red] {acos(2-cos(deg(x))-3.5)*3.14159265/180};
    \addplot[samples=200, domain = 2.0943951:3.14159265, color = red] {-acos(2-cos(deg(x))-3.5)*3.14159265/180};
     \addplot[samples=200, domain = -3.14159265:-2.0943951, color = red] {acos(2-cos(deg(x))-3.5)*3.14159265/180};
     \addplot[samples=200, domain = -3.14159265:-2.0943951, color = red] {-acos(2-cos(deg(x))-3.5)*3.14159265/180};
    
\end{axis}
\end{tikzpicture}
\caption{Level curves $\{(\theta_1,\theta_2) \in [-\pi,\pi]^2 : E = 2-2\cos(\theta_1) + 2 - 2\cos(\theta_2) \}$ of constant energy for $d=2$}
\label{fig:levelcurve}
\end{figure}
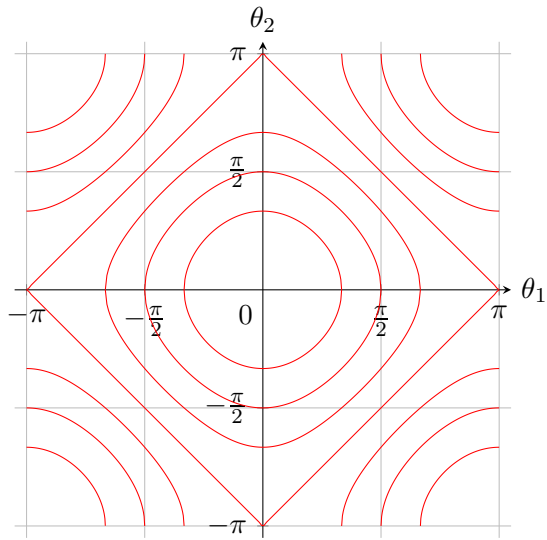

\begin{proposition}
\label{discretenotcompact}
Let $H_0$ and $A$ be those from Example \ref{ex:discSchro}. If $d \geqslant 2$, then $\langle A \rangle ^{-s} \chi (H_0) \not \in \K(\ell^2(\Z^d))$ for all $\chi \in \CC([0,4d])$ with non-empty support in $(0,4d)$, and for all $s \in \R$.
\end{proposition}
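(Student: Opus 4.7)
My plan is to adapt the proof of Proposition \ref{dnotcompact} to the discrete setting. Working in the Fourier picture where $\F H_0 \F^{-1}$ is multiplication by $h(\theta) := \sum_{i=1}^d(2-2\cos\theta_i)$ and $\F A \F^{-1} = A_\F$, I will construct a sequence $(\Psi_n)_n$ of unit vectors in $L^2([-\pi,\pi]^d)$ converging weakly to zero, on which $\chi(h)$ has a uniform lower bound $\|\chi(h)\Psi_n\| \geqslant m/2$ while $\|(A_\F+\i)^\nu \Psi_n\|$ stays bounded in $n$ for each fixed $\nu \in \N$. The standard contradiction argument --- exactly as in Proposition \ref{dnotcompact} --- will then show that $\chi(H_0)(A+\i)^{-\nu}$ is not compact for any $\nu$, whence $\chi(H_0)\langle A\rangle^{-s} \notin \K$ (using that $\langle A\rangle^{s}(A+\i)^{-\nu}$ is bounded for $\nu \geqslant s$), and the result follows by taking adjoints.

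The first step will be the choice of base point. Because $A_\F$ carries the factor $\sin(\theta_i)$ in front of $\partial_{\theta_i}$, concentrating $\Psi_n$ at scale $1/n$ around a point $\theta_1^*$ with $\sin\theta_1^* \neq 0$ would push $\|A_\F\Psi_n\|$ to order $n$, which is fatal. I therefore require $\theta_1^* \in \{0,\pi\}$ so that $\sin\theta_1^* = 0$. Taking $\theta_1^* = 0$ (resp.\ $\pi$) and tuning $\theta_2^*,\ldots,\theta_d^*$ on the level set $\{h = \lambda\}$ reaches all $\lambda \in [0, 4(d-1)]$ (resp.\ $[4, 4d]$); the union $[0, 4d]$ covers every admissible $\lambda$ thanks to $d \geqslant 2$. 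Given $\lambda$ interior to $\supp(\chi)$ and $m, r > 0$ with $|\chi| \geqslant m$ on $\I(\lambda, r)$, I will pick such a $\theta^*$ with $h(\theta^*) = \lambda$.

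The Weyl sequence will then be
\[ \Psi_n(\theta) := \sqrt{n}\,\psi_1\bigl(n(\theta_1-\theta_1^*)\bigr) \prod_{i=2}^d \psi_i(\theta_i), \]
with $\psi_i \in \CC_c^\infty(\R)$ unit-norm bumps, $\psi_1(0) \neq 0$, $\supp(\psi_1) \subset [-1,1]$, and $\supp(\psi_i)$ ($i \geqslant 2$) a small neighborhood of $\theta_i^*$ chosen so that $h(\theta_1^*,\theta_2,\ldots,\theta_d) \in \I(\lambda, r/2)$ throughout. Then $\|\Psi_n\| = 1$, $\Psi_n \xrightarrow[]{w} 0$, and continuity of $\chi \circ h$ yields $\|\chi(h)\Psi_n\| \geqslant m/2$ for large $n$.

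The main obstacle will be controlling $\|(A_\F+\i)^\nu \Psi_n\|$ uniformly in $n$. For the critical term $\sin(\theta_1)\partial_{\theta_1}\Psi_n$, the change of variables $y = n(\theta_1-\theta_1^*)$ gives squared norm $n^2 \int |\sin(\theta_1^*+y/n)|^2 |\psi_1'(y)|^2 dy$; using $\sin\theta_1^* = 0$ and $|\sin(y/n)| \leqslant |y|/n$, this collapses to $\int y^2 |\psi_1'(y)|^2 dy$, which is finite. Derivatives in the directions $j \geqslant 2$ produce no offending powers of $n$, and the multiplication contributions $\cos(\theta_i)$ arising from $\partial_{\theta_i}\sin(\theta_i)$ are harmless. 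To iterate, I will show by induction on $\nu$ that $(A_\F+\i)^\nu \Psi_n$ is a finite sum of terms of the form $\sqrt{n}\, G_n^{(k)}(n(\theta_1-\theta_1^*))\, H_n^{(k)}(\theta_2,\ldots,\theta_d)$ where $\{G_n^{(k)}\}_n$ and $\{H_n^{(k)}\}_n$ are families of compactly supported smooth functions uniformly bounded in every $C^k$ norm; the inductive step uses precisely the cancellation $n\sin(y/n) = O(|y|)$ to absorb each new factor of $n$ arising from differentiation in $\theta_1$.
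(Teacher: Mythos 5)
Your proof is correct and takes essentially the same route as the paper: build a Weyl sequence $\Psi_n$ concentrated at scale $1/n$ around a base point with $\theta_1^*\in\{-\pi,0,\pi\}$ (the paper calls this $T$), verify $\|(A_\F+\i)^\nu\Psi_n\|$ stays bounded in $n$, and run the standard non-compactness contradiction. Your write-up is actually more explicit than the paper's: you spell out both why the base point must lie at a zero of $\sin$ (the cancellation $n\sin(\theta_1^*+y/n)=O(|y|)$) and how the union $[0,4(d-1)]\cup[4,4d]$ covers $[0,4d]$ precisely when $d\geqslant 2$, whereas the paper leaves the uniform bound on $\|\varphi_n^\nu\|$ as an unjustified claim referencing Proposition \ref{dnotcompact}.
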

\begin{proof}
Let $\lambda \in (0, 4d)$ and $r>0$ be such that $\I(\lambda,r) \subset (0,4d)$ and $m:= \inf_{x \in \I(\lambda,r)} | \chi(x)| > 0$. Fix an energy $E \in \I(\lambda,r)$. Consider the constant energy curves 
\begin{equation*}
\{(\theta_1,...,\theta_d) \in [-\pi,\pi]^d : E = 2-2\cos(\theta_1) + ... + 2-2\cos(\theta_d) \}.
\end{equation*}
For $d=2$, these level curves are drawn in Figure \ref{fig:levelcurve} for various energies in $[0,8]$. Let us proceed in dimension two to keep things simple. The aim is to show that $\F \chi (H_0) \F^{-1} (A_{\F} +\i)^{-\nu}$ is not compact for every fixed value of $\nu \in \N$. Now $\F \chi (H_0) \F^{-1}$ is equal to the operator of multiplication by $\chi(2-2\cos(\theta_1)+2-2\cos(\theta_2))$. The support of this function of two variables contains a neighborhood of a portion of the following vertical axes : $\theta_1=-\pi,0$ or $\pi$. Let $\mathcal{N}$ be such a neighborhood. Let $T$ be one of these three values depending on the situation. We can then create a sequence $\Psi_n(\theta_1,\theta_2) = \sqrt{n} \psi_1((\theta_1-T) n )\psi_2(\theta_2)$ that is supported in $\mathcal{N}$, converges weakly to zero and $\|\Psi_n\| \equiv 1$. Now let $\varphi_n^{\nu} := (A_{\F} + \i)^{\nu} \Psi_n$. Then for every fixed value of $\nu$, $\|\varphi_n ^{\nu}\|$ is uniformly bounded in $n$. The rest of the proof follows the guidelines as that of Proposition \ref{dnotcompact}.
\qed
\end{proof}

Finally, as in the continuous case, we have: 
\begin{corollary} 
Let $A$ be that from Example \ref{ex:discSchro}. Let $d \geqslant 2$. Then for all $(s,\epsilon) \in \R \times (0,+\infty)$, $\langle A \rangle ^{-s} \langle N \rangle ^{\epsilon} \not \in \B(\ell^2(\Z^d))$. 
\end{corollary}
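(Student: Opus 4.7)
The plan is a short proof by contradiction that mirrors the continuous analogue appearing as the corollary just after Proposition \ref{dnotcompact}, with Proposition \ref{discretenotcompact} playing the role of the non-compactness input. Suppose for contradiction that $\langle A \rangle^{-s} \langle N \rangle^{\epsilon}$ (defined a priori on the dense subspace $\D(\langle N \rangle^{\epsilon})$) extends to a bounded operator on $\ell^2(\Z^d)$ for some $s \in \R$ and $\epsilon > 0$.

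First I would observe that $\langle N \rangle^{-\epsilon}$ is compact on $\ell^2(\Z^d)$: it is multiplication by $n \mapsto \langle n \rangle^{-\epsilon}$, which tends to zero as $\|n\| \to +\infty$ because $\epsilon > 0$, so by the criterion recalled at the beginning of Section \ref{ex:2} the induced multiplication operator is compact. Next, pick any $\chi \in \CC([0,4d])$ whose support is non-empty and contained in $(0,4d)$; the operator $\chi(H_0)$ is bounded on $\ell^2(\Z^d)$ via continuous functional calculus. It follows that $\langle N \rangle^{-\epsilon} \chi(H_0)$ is compact, being the composition of a compact with a bounded operator.

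I would then check that the range of $\langle N \rangle^{-\epsilon} \chi(H_0)$ lies in $\D(\langle N \rangle^{\epsilon})$, which is immediate: for every $\psi \in \ell^2(\Z^d)$, one has $\langle N \rangle^{\epsilon} \langle N \rangle^{-\epsilon} \chi(H_0) \psi = \chi(H_0) \psi \in \ell^2(\Z^d)$. This justifies writing
\[
\langle A \rangle^{-s} \chi(H_0) = \bigl(\langle A \rangle^{-s} \langle N \rangle^{\epsilon}\bigr) \bigl(\langle N \rangle^{-\epsilon} \chi(H_0)\bigr),
\]
where the first factor is bounded by the contradictory hypothesis and the second factor is compact. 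Hence $\langle A \rangle^{-s} \chi(H_0)$ would be compact on $\ell^2(\Z^d)$, contradicting Proposition \ref{discretenotcompact} for $d \geqslant 2$. I do not anticipate any real obstacle: the only subtlety is the domain bookkeeping for the intermediate composition, which is trivial here, and everything else is an immediate product-of-operators argument relying on the previously established non-compactness result.
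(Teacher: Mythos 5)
Your proof is correct and matches the approach the paper implicitly intends (the paper gives no explicit proof but points to the continuous analogue, which it says uses Proposition \ref{KnownFourierDecay} to show $\langle Q\rangle^{-\epsilon}\chi(H_0)$ is compact; you rightly replace that step with the elementary fact that $\langle N\rangle^{-\epsilon}$ is itself a compact multiplication operator on $\ell^2(\Z^d)$). The contradiction with Proposition \ref{discretenotcompact} via the factorization $\langle A\rangle^{-s}\chi(H_0)=\bigl(\langle A\rangle^{-s}\langle N\rangle^{\epsilon}\bigr)\bigl(\langle N\rangle^{-\epsilon}\chi(H_0)\bigr)$ is exactly the intended argument.
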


\appendix
\section{Why scattering states evolve where $A$ is prevalent}
\label{Heuristic}
This appendix is based on \cite[Section 3.2]{Go}. We give an idea why it is not unreasonable to expect both purely absolutely continuous spectrum and a propagation estimate under the only assumptions $H \in \CC^1(A)$ and the Mourre estimate \eqref{Mestimate} on $\I$, when $\I$ is void of eigenvalues. Without loss of generality, we may assume that the Mourre estimate for $H$ is strict over the interval $\I$. Given a state $f$ and $f_t := e^{-\i tH}f$ its evolution at time $t \in \R$ under the dynamics generated by the operator $H$, one looks at the Heisenberg picture:
\begin{equation}
\label{Expectation}
\mathcal{A}_f(t) := \langle f_t,A f_t \rangle.
\end{equation}
This is the time-evolution of the expectation value of the observable $A$. Since we are localized in energy in $\I$, and $A$ is generally an unbounded operator, we take $f := \varphi(H) g$, with $g \in \mathcal{D}(A)$ and $\varphi \in C^{\infty}_c(\mathcal{I})$, the smooth functions compactly supported on the interval $\mathcal{I}$. In addition to imply that $[H, \i A]_{\circ} \in  \mathcal{B}(\mathcal{D}(H),\mathcal{D}(H)^*)$, the assumption $H \in \CC^1(A)$ implies that $e^{-\i tH} \varphi(H)$ stabilizes the domain of $A$, ensuring that $\mathcal{A}_f(t)$ is well defined.
Differentiating \eqref{Expectation} gives
\begin{equation}
\label{DerExpectation}
\mathcal{A}'_f(t) := \langle f_t,[H, \i A]_{\circ} f_t \rangle = \langle f_t, E_{\mathcal{I}}(H) [H, \i A]_{\circ} E_{\mathcal{I}}(H) f_t \rangle .
\end{equation}
By using the strict Mourre estimate and the boundedness of $[H, \i A]_{\circ}$ we get
\begin{equation*}
c \|f\|^2  \leqslant \mathcal{A}'_f(t) \leqslant  k \| f\|^2,
\end{equation*}
where $k := \|[H, \i A]_{\circ}\|_{\mathcal{B}(\mathcal{D}(H),\mathcal{D}(H)^*)}$. Integrating this equation yields
\begin{equation*}
c t \|f\|^2 \leqslant \mathcal{A}_f(t) -\mathcal{A}_f(0) \leqslant k t \| f\|^2, \ \text{for all} \ t \geqslant 0.
\end{equation*}
The transport of the particle is therefore ballistic with respect to $A$. This is characteristic of purely absolutely continuous states and propagation estimates are usually obtained in these circumstances. 

\section{A uniform RAGE Theorem}
\label{RAGEappendix}

We would like to make a relevant observation about the RAGE Theorem that appears to be absent from the literature. A small modification of the proof of \cite[Theorem 5.8]{CFKS} leads to:

\begin{theorem} 
[RAGE]
\label{CFKSRAGE}
Let $H$ be a self-adjoint operator on $\H$. Let $\I$ be a bounded interval. 

\noindent $(1)$\ If $W \in \K(\H)$, 
\begin{align} \label{RAGEsup} 
\sup_{\psi \in \H, \|\psi\|=1}\frac{1}{T} \int_0 ^T \| W e^{-\i tH} P_{\rm{c}}(H) \psi \|^2 \ dt \to 0 \quad \text{as} \ T \to \infty. \end{align}

\noindent $(2)$\  If $W \in \B(\H)$ and is $H$-relatively compact, then for all $\psi \in \H$,
\[ \frac{1}{T} \int_0 ^T \| W e^{-\i tH} P_{\rm{c}}(H) \psi \|^2 \ dt \to 0 \quad \text{as} \ T \to \infty. \]
and
\[ \sup_{\psi \in \H, \|\psi\|=1}\frac{1}{T} \int_0 ^T \| W e^{-\i tH} P_{\rm{c}}(H) E_\I(H) \psi \|^2 \ dt \to 0 \quad \text{as} \ T \to \infty. \]
\noindent $(3)$\  If $W \in \K(\H)$, then
\[ \bigg \| \frac{1}{T} \int_0 ^T e^{\i tH} W e^{-i tH} P_{\rm{c}}(H) \ dt \bigg \| \to 0 \quad \text{as} \ T \to \infty. \]
\end{theorem}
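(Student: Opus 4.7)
My strategy is to first establish (3), then bootstrap (1) and (2) from it via simple reformulations. The heart of the matter is a Wiener-type argument for the Cesàro means of spectral Fourier transforms, which I expect to be the main obstacle.

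\emph{Step 1: prove (3).} Given $W \in \K(\H)$, approximate $W$ in operator norm by finite-rank operators $W_N = \sum_{j=1}^N |\varphi_j\rangle\langle\psi_j|$; since $\|\tfrac{1}{T}\int_0^T e^{\i tH}(W-W_N)e^{-\i tH}P_{\rm c}(H)\,dt\| \leq \|W-W_N\|$, it suffices to handle rank-one $|\varphi\rangle\langle\psi|$. Then the integrand has operator norm bounded in absolute value by $|\langle\psi, e^{-\i tH}P_{\rm c}(H)\cdot\rangle|\|\varphi\|$, and standard manipulations reduce matters to showing
\[
\frac{1}{T}\int_0^T |\langle \varphi, e^{-\i tH} P_{\rm c}(H)\psi\rangle|^2\,dt \longrightarrow 0, \qquad T \to \infty,
\]
for any $\varphi,\psi \in \H$. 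This is Wiener's theorem: the left-hand side converges to $\sum_{\lambda}|\langle\varphi,E_{\{\lambda\}}(H)\psi\rangle|^2$, a sum over eigenvalues of $H$, which vanishes because $P_{\rm c}(H)\psi$ is orthogonal to every eigenvector. (Some care is needed to turn the pointwise estimate into the operator-norm estimate; one can use a polar decomposition of the integrated operator and test against unit vectors.)

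\emph{Step 2: deduce (1) from (3).} The key observation is that the operator $B_T := \tfrac{1}{T}\int_0^T P_{\rm c}(H) e^{\i tH} W^*W e^{-\i tH} P_{\rm c}(H)\,dt$ is self-adjoint and positive, and
\[
\|B_T\| \;=\; \sup_{\|\psi\|=1}\langle \psi, B_T \psi\rangle \;=\; \sup_{\|\psi\|=1} \frac{1}{T}\int_0^T \|W e^{-\i tH} P_{\rm c}(H)\psi\|^2\,dt.
\]
Since $W \in \K(\H)$ implies $W^*W \in \K(\H)$, part (3) applied to $W^*W$ gives $\|B_T\| \to 0$ (the extra $P_{\rm c}(H)$ on the left is absorbed in taking norms).

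\emph{Step 3: deduce (2).} If $W$ is bounded and $H$-relatively compact, then $W(H+\i)^{-1} \in \K(\H)$. For the uniform statement with $E_\I(H)$ inserted, write
\[
W e^{-\i tH}P_{\rm c}(H)E_\I(H)\psi \;=\; W(H+\i)^{-1}\, e^{-\i tH}\, P_{\rm c}(H)\, (H+\i)E_\I(H)\psi,
\]
and note that $\|(H+\i)E_\I(H)\| \leq C_\I < \infty$ since $\I$ is bounded. Applying (1) to the compact operator $W(H+\i)^{-1}$ and bounding the sup over $\|\psi\|\leq 1$ by $C_\I^2$ times the sup over $\|\phi\|\leq 1$ of the right-hand side yields the result. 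For the non-uniform statement, fix $\psi \in \H$ and $\varepsilon > 0$; choose $n$ so that $\|W\|\cdot\|\psi - E_{[-n,n]}(H)\psi\| < \varepsilon$, use the uniform version on $\I = [-n,n]$ applied to the single vector $\psi$, and conclude by a triangle inequality in $L^2([0,T],T^{-1}dt)$.

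\emph{Main obstacle.} The substantive step is the Wiener limit in Step 1, which requires the spectral projector $P_{\rm c}(H)$ to genuinely eliminate the atomic contributions to the spectral measure of $H$; the rest is bookkeeping. Once (3) is established, (1) is essentially a restatement and (2) follows by the standard compactification trick.
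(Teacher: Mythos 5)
Your overall route---prove the operator-norm statement (3) first, deduce the uniform (1) by noting that $\sup_{\|\psi\|=1}\frac1T\int_0^T\|We^{-\i tH}P_{\rm c}\psi\|^2\,dt$ equals the norm of the positive operator $\frac1T\int_0^T P_{\rm c}e^{\i tH}W^*We^{-\i tH}P_{\rm c}\,dt$, and then deduce (2) via $W(H+\i)^{-1}$ and the boundedness of $(H+\i)E_\I(H)$---is exactly the paper's route; the paper simply outsources (3) and the non-uniform parts to \cite[Theorem 5.8]{CFKS} and only spells out the $(H+\i)E_\I(H)$ factorization. Your Steps 2 and 3 are correct, the $E_{[-n,n]}(H)$ cutoff for the non-uniform part of (2) is fine, and the positivity identity in Step 2 is precisely why the supremum in (1) comes for free---which is the whole point the paper is making.

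The gap is in Step 1. After reducing to rank one $W=|\varphi\rangle\langle\psi|$ and applying Cauchy--Schwarz in $t$, what you actually need to bound is
\[
\|\varphi\|\,\sup_{\|\eta\|\leq1}\Big(\tfrac1T\int_0^T|\langle P_{\rm c}\psi,e^{-\i tH}\eta\rangle|^2\,dt\Big)^{1/2}.
\]
Wiener's theorem controls the inner integral for each \emph{fixed} $\eta$ (your atomic-sum computation is correct), but the supremum over $\eta$ is precisely $\|Q_T\|^{1/2}$ for $Q_T:=\frac1T\int_0^T e^{\i tH}|P_{\rm c}\psi\rangle\langle P_{\rm c}\psi|e^{-\i tH}\,dt$, i.e.\ statement (3) itself for the rank-one projection onto $P_{\rm c}\psi$. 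So the parenthetical suggestion to "test against unit vectors" is circular: it reproduces exactly the quantity Wiener controls only non-uniformly, and the per-pair pointwise limit does not give the operator norm. The standard way to close this (and the content of the CFKS proof you are reconstructing) is to pass to the Hilbert--Schmidt norm, which collapses the rank-one trace onto the single diagonal spectral measure $\mu(\cdot):=\langle P_{\rm c}\psi, E_{\cdot}(H)P_{\rm c}\psi\rangle$:
\[
\|Q_T\|^2 \leq \Tr(Q_T^2)=\frac{1}{T^2}\int_0^T\!\!\int_0^T\big|\langle P_{\rm c}\psi,e^{\i(t-s)H}P_{\rm c}\psi\rangle\big|^2\,ds\,dt\leq\frac1T\int_{-T}^T\big|\hat\mu(u)\big|^2\,du\longrightarrow 0,
\]
by Wiener's theorem applied to the single atomless measure $\mu$. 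With this step inserted, Step 1 is complete and the whole proposal is sound.
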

 
The improvement consists in the supremum which is absent in the standard version of the Theorem.
In \cite{CFKS}, they prove (3). They state a weaker version of (1), although their proof gives in fact (1). The first part of (2) is proven in \cite{CFKS}. For the second part, apply (1) with $\tilde\psi :=  (H+\i) E_\I(H) \psi$ and conclude by noticing that $(H+\i) E_\I(H)$ is a bounded operator.

\section{Overview of almost analytic extension of smooth functions}
\label{Appendix}

We refer to \cite{D}, \cite{DG}, \cite{GJ1}, \cite{GJ2}, \cite{HS2} and \cite{M} for more details. We review basic results that are spread out in the mentioned literature. Let $\rho \in \R$ and denote by $\mathcal{S}^{\rho}(\R)$ the class of functions $\varphi$ in $\CC^{\infty}(\R)$ such that 
\begin{equation}
|\varphi^{(k)}  (x)| \leqslant C_k \langle x \rangle ^{\rho-k}, \quad \text{for all} \ k \in \N.
\label{decay1}
\end{equation}
For the purpose of this article we only need the class $\mathcal{S}^{0}(\R)$. This class consists of the smooth bounded functions having derivatives with suitable decay. 
\begin{Lemma} \cite{D} and \cite{DG}
\label{DGDavies}
Let $\varphi \in \mathcal{S}^{\rho}(\R)$, $\rho \in \R$. Then for every $N \in \Z^+$ there exists a smooth function $\tilde{\varphi}_N : \C \to \C$, called an almost analytic extension of $\varphi$, satisfying:
\begin{equation}
\tilde{\varphi}_N(x+\i 0) = \varphi(x) \ \forall x \in \R;
\end{equation}
\begin{equation}
\mathrm{supp} \ (\tilde{\varphi}_N) \subset \{x+\i y : |y| \leqslant \langle x \rangle \};
\end{equation}
\begin{equation}
\tilde{\varphi}_N(x+\i y) = 0 \ \forall y \in \R \ \mathrm{whenever} \ \varphi(x) = 0;
\end{equation}
\begin{equation}
\forall \ell \in \N \cap [0,N], \Bigg| \frac{\partial \tilde{\varphi}_N}{\partial \overline{z}}(x+\i y) \Bigg| \leqslant c_{\ell} \langle x \rangle ^{\rho -1-\ell} |y|^{\ell} \ \mathrm{for \ some \ constants} \ c_{\ell} >0.
\label{dei}
\end{equation}
\end{Lemma}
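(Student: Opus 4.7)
\emph{Proof plan.} The construction is standard: I would build $\tilde{\varphi}_N$ as a truncated Taylor expansion in the imaginary direction, multiplied by a spatial cutoff that enforces property (2).

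\textbf{Construction.} Fix a cutoff $\tau \in \CC_c^\infty(\R)$ with $0 \leqslant \tau \leqslant 1$, $\tau(t) = 1$ for $|t| \leqslant 1/2$, and $\tau(t) = 0$ for $|t| \geqslant 1$. Define
\[
\tilde{\varphi}_N(x + \i y) := \tau\!\left(\frac{y}{\langle x \rangle}\right) \sum_{k=0}^{N} \varphi^{(k)}(x) \frac{(\i y)^k}{k!}.
\]
This is smooth on $\C$ since $\tau$, $\langle x \rangle$, and all derivatives of $\varphi$ are smooth and $\langle x \rangle$ is bounded away from zero.

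\textbf{Verification of (1)--(3).} Setting $y=0$ kills all terms with $k \geqslant 1$, and $\tau(0) = 1$, so $\tilde{\varphi}_N(x+\i 0) = \varphi(x)$. The support condition (2) is immediate from $\mathrm{supp}(\tau) \subset [-1,1]$: if $|y| > \langle x \rangle$ then $|y/\langle x \rangle| > 1$ and the prefactor vanishes. For property (3), I would note that in the applications of this article $\varphi$ is compactly supported, so if $\varphi(x)=0$ on an open set $U$, then $\varphi^{(k)}(x)=0$ on $U$ for all $k$ and the whole sum vanishes; for isolated zeros one multiplies the construction by an auxiliary smooth cutoff $\chi(x)$ equal to $1$ on $\mathrm{supp}(\varphi)$, which does not affect (1), (2), or the bound (4).

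\textbf{Verification of (4), the main step.} Writing $\partial_{\bar z} = \tfrac{1}{2}(\partial_x + \i \partial_y)$, I would apply it to the two factors. Acting on the polynomial part, a direct computation yields a telescoping cancellation: the $\partial_x$ derivative shifts each $\varphi^{(k)}$ to $\varphi^{(k+1)}$, while $\i \partial_y$ differentiates $(\i y)^k/k!$ to $\i \cdot \i^k y^{k-1}/(k-1)! = -\i^{k-1} y^{k-1}/(k-1)!$, matching the previous sum term by term except for the top-order term. The surviving contribution is
\[
\frac{1}{2}\, \tau\!\left(\frac{y}{\langle x \rangle}\right) \varphi^{(N+1)}(x) \frac{(\i y)^N}{N!}.
\]
Bounding $|\varphi^{(N+1)}(x)| \leqslant C \langle x \rangle^{\rho - N - 1}$ by \eqref{decay1} and using $|y| \leqslant \langle x \rangle$ on $\mathrm{supp}(\tilde{\varphi}_N)$, I write $|y|^N = |y|^{N-\ell} |y|^\ell \leqslant \langle x \rangle^{N-\ell} |y|^\ell$ for any $\ell \in [0,N] \cap \N$, yielding the required bound $c_\ell \langle x \rangle^{\rho-1-\ell} |y|^\ell$.

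\textbf{The cutoff contribution.} The remaining term $\tfrac{1}{2}[\partial_x\tau + \i \partial_y \tau](y/\langle x \rangle) \cdot \sum_{k=0}^N \varphi^{(k)}(x)(\i y)^k/k!$ is supported in the annular region $\tfrac{1}{2}\langle x \rangle \leqslant |y| \leqslant \langle x \rangle$. The derivatives of $\tau(y/\langle x \rangle)$ are each bounded by $C/\langle x \rangle$ by the chain rule, and the polynomial sum is bounded by $C \sum_{k=0}^N \langle x \rangle^{\rho-k} |y|^k \leqslant C' \langle x \rangle^\rho$ on this region. Since $|y| \sim \langle x \rangle$ here, one has $\langle x \rangle^{-1} \sim |y|^{-1}$, so for any $\ell \leqslant N$ one trades $\ell$ powers of $\langle x \rangle^{-1}$ for $|y|^{-\ell}$, and the term is absorbed into the same bound $c_\ell \langle x \rangle^{\rho-1-\ell}|y|^\ell$.

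\textbf{Main obstacle.} The only subtlety I expect is bookkeeping the telescoping cancellation in the polynomial derivative so that only the $\varphi^{(N+1)}(x)(\i y)^N/N!$ term remains — this must be written out carefully by reindexing one of the two sums — and then arranging the bound for the cutoff-derivative piece so that the factor $|y|^\ell$ comes out with the correct power on the full support of $\partial \tau(y/\langle x \rangle)$.
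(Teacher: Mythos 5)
Your proof is correct and is essentially the paper's own argument: the same truncated Taylor expansion in $y$ multiplied by the cutoff $\tau(y/\langle x\rangle)$, the same telescoping cancellation leaving only the term $\tfrac12\tau(y/\langle x\rangle)\varphi^{(N+1)}(x)(\i y)^N/N!$, and the same treatment of the cutoff-derivative piece on the region $\langle x\rangle/2\leqslant |y|\leqslant\langle x\rangle$, with identical bounds yielding \eqref{dei}. One cosmetic remark: property (3) is to be understood as $\tilde{\varphi}_N(x+\i y)=0$ for $x\notin\supp\varphi$, which your construction already satisfies with no auxiliary cutoff (all $\varphi^{(k)}$ vanish there), whereas no construction of this type makes the statement hold literally at isolated zeros interior to $\supp\varphi$ — the paper's proof does not address this either.
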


\begin{Lemma} \cite{GJ1}
\label{extend rho}
Let $\rho \geqslant 0$ and $\varphi \in \mathcal{S}^{\rho}(\R)$. Let $\varphi(A)$ with domain $\mathcal{D}(\varphi(A)) \supset \mathcal{D}(\langle A \rangle ^{\rho})$ be the operator whose existence is assured by the spectral theorem. Then for $f \in \mathcal{D}(\langle A \rangle ^{\rho})$, 
\begin{equation}
\varphi(A)f = \lim \limits_{R \to \infty} \frac{\i}{2\pi} \int_{\C} \frac{\partial (\tilde{\varphi\theta_R})_N}{\partial \overline{z}} (z) (z-A)^{-1} f \ dz \wedge d\overline{z},
\label{gross}
\end{equation}
where $\theta_R(x) := \theta(x/R)$ and $\theta \in \CC_c^{\infty}(\R)$ is a bump function such that $\theta(x) =1$ for $x\in [-1/2,1/2]$ and $\theta(x)=0$ for $x\in \R\setminus [-1,1]$. 
\end{Lemma}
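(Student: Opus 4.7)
My strategy is to reduce to the standard Helffer--Sj\"ostrand formula (which is available for compactly supported smooth functions) and then to pass to the limit $R\to\infty$ using dominated convergence with respect to the spectral measure of $A$.

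First, I would fix $R>0$. Since $\theta_R$ is compactly supported, so is $\varphi\theta_R$, and therefore $\varphi\theta_R\in \mathcal{S}^{-N}(\R)$ for every $N$. By the support property in Lemma \ref{DGDavies}, the almost analytic extension $(\widetilde{\varphi\theta_R})_N$ is supported in a compact subset of $\C$. For such a compactly supported smooth function the standard Helffer--Sj\"ostrand identity
\begin{equation*}
(\varphi\theta_R)(A) = \frac{\i}{2\pi}\int_{\C} \frac{\partial (\widetilde{\varphi\theta_R})_N}{\partial \overline{z}}(z)\, (z-A)^{-1}\, dz \wedge d\overline{z}
\end{equation*}
holds in $\B(\H)$, the integral converging absolutely in operator norm thanks to \eqref{dei} applied with $\ell=1$ together with the resolvent bound $\|(z-A)^{-1}\|\leq |\mathrm{Im}\, z|^{-1}$. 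I will treat this as a known fact (it is the standard content of \cite{D,DG,HS2}).

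Next, I would apply both sides to a fixed $f\in \mathcal{D}(\langle A\rangle^\rho)$ and show that $(\varphi\theta_R)(A)f\to\varphi(A)f$ in $\H$ as $R\to\infty$. Using the spectral measure $E_\lambda$ of $A$,
\begin{equation*}
\|(\varphi\theta_R)(A)f - \varphi(A)f\|^2 = \int_{\R} |\varphi(\lambda)|^2\,|1-\theta_R(\lambda)|^2\, d\|E_\lambda f\|^2.
\end{equation*}
By \eqref{decay1} with $k=0$ and the uniform bound $\|\theta_R\|_\infty=\|\theta\|_\infty$, the integrand is pointwise dominated, uniformly in $R$, by a constant multiple of $\langle\lambda\rangle^{2\rho}\, d\|E_\lambda f\|^2$, which is a finite measure because $f\in\mathcal{D}(\langle A\rangle^\rho)$. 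Since $\theta_R(\lambda)\to 1$ for every $\lambda\in\R$, the integrand tends pointwise to $0$, so dominated convergence gives the desired limit. Combining this with the displayed Helffer--Sj\"ostrand formula applied for each $R$ yields \eqref{gross}.

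The only conceptual subtlety -- not really an obstacle -- is to appreciate why the cutoff $\theta_R$ cannot be removed when $\rho\geq 0$: the best bound \eqref{dei} gives on the support $\{|y|\leq\langle x\rangle\}$ is $\langle x\rangle^{\rho-1-\ell}|y|^\ell$, whose integral against $|y|^{-1}$ over this region behaves like $\int \langle x\rangle^{\rho-1}\,dx$, which diverges for $\rho\geq 0$ regardless of how large $\ell$ is chosen. This is precisely why the formula must be stated as a limit of convergent integrals rather than as a single integral, and why the convergence step has to be performed at the spectral level rather than by passing to the limit under the integral sign on the right-hand side of \eqref{gross}.
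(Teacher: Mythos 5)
Your proof is correct and follows essentially the same route as the paper's (which appears in the source only as a commented-out sketch citing \cite{GJ1}): reduce to the compactly supported Helffer--Sj\"ostrand identity for $\varphi\theta_R$ and then pass to the limit $R\to\infty$ at the level of the vector $f$. The paper phrases the limit step as strong operator convergence of $(\varphi_\rho\theta_R)(A)$ to $\varphi_\rho(A)$, where $\varphi_\rho(t):=\varphi(t)\langle t\rangle^{-\rho}$ is bounded and one applies the result to $\langle A\rangle^\rho f$; your explicit dominated-convergence computation with respect to $d\|E_\lambda f\|^2$ is precisely the content of that strong convergence, so the two write-ups differ only in how much of the standard functional-calculus fact is spelled out.
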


\begin{Lemma}
For $\rho \geqslant 0$ and $\varphi \in \mathcal{S}^{\rho}(\R)$, the following limit exists:
\begin{equation}
\varphi^{(k)}(A)f = \lim \limits_{R \to \infty} \frac{\i (k!)}{2\pi} \int_{\C} \frac{\partial (\tilde{\varphi\theta_R})_N}{\partial \overline{z}} (z) (z-A)^{-1-k} f \ dz \wedge d\overline{z}, \quad \text{for all} \  f \in \mathcal{D}(\langle A \rangle ^{\rho}),
\label{gross2}
\end{equation}
where $\theta$ is the same as in Lemma \ref{extend rho}. Moreover, if $0 \leqslant \rho < k$ and $\varphi^{(k)}$ is a bounded function, then $\varphi^{(k)}(A)$ is a bounded operator and 
\begin{equation}
\label{derivative}
\varphi^{(k)}(A) = \frac{\i (k!)}{2\pi} \int _{\C} \frac{\partial \tilde{\varphi}_N}{\partial \overline{z}} (z) (z-A)^{-1-k} \ dz \wedge d\overline{z}
\end{equation}
holds with the integral converging in norm.
\end{Lemma}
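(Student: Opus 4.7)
The strategy is to apply the Helffer-Sj\"ostrand representation of Lemma \ref{extend rho} to $\varphi^{(k)}$ and then transfer the $k$ derivatives onto the almost analytic extension of $\varphi\theta_R$ by repeated complex integration by parts, using the identity $(z-A)^{-1-k}=\frac{(-1)^k}{k!}\partial_z^k(z-A)^{-1}$.

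First I fix $R>0$ and set $\psi_R := \varphi\theta_R\in \CC_c^{\infty}(\R)$, so that $(\tilde{\psi_R})_N$ and all its derivatives have compact support in $\C$, and $\psi_R^{(k)}$ is bounded. Lemma \ref{extend rho} applied to $\psi_R^{(k)}\in\mathcal{S}^{0}(\R)$ gives, for every $f\in\H$,
\[
\psi_R^{(k)}(A)\,f \;=\; \frac{\i}{2\pi}\int_{\C}\frac{\partial (\tilde{\psi_R^{(k)}})_N}{\partial\overline z}(z)\,(z-A)^{-1} f\, dz\wedge d\overline z.
\]
On the other hand, Stokes' theorem, applied to the compactly supported $1$-form $\partial_{\overline z}(\tilde{\psi_R})_N\cdot(z-A)^{-1}\,d\overline z$, yields the identity $\int\partial_z F\cdot G\, dz\wedge d\overline z = -\int F\cdot\partial_z G\, dz\wedge d\overline z$. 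Iterating this $k$ times, and using that $\partial_z$ and $\partial_{\overline z}$ commute, I get
\begin{align*}
\frac{\i(k!)}{2\pi}\int_{\C}\frac{\partial(\tilde{\psi_R})_N}{\partial\overline z}(z-A)^{-1-k}\,dz\wedge d\overline z
&= \frac{\i(-1)^k}{2\pi}\int_{\C}\frac{\partial(\tilde{\psi_R})_N}{\partial\overline z}\,\partial_z^k(z-A)^{-1}\,dz\wedge d\overline z\\
&= \frac{\i}{2\pi}\int_{\C}\frac{\partial}{\partial\overline z}\bigl(\partial_z^k(\tilde{\psi_R})_N\bigr)\,(z-A)^{-1}\,dz\wedge d\overline z.
\end{align*}
Choosing the Mather-H\"ormander construction for $(\tilde{\psi_R})_N$ (with $N\ge k+1$) ensures $\partial_z^k(\tilde{\psi_R})_N\vert_{\R}=\psi_R^{(k)}$; together with compact support this makes $\partial_z^k(\tilde{\psi_R})_N$ an admissible almost analytic extension of $\psi_R^{(k)}$, and the last display therefore equals $\psi_R^{(k)}(A)$ by the previous identity. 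The first formula of the lemma then follows by letting $R\to\infty$: for $f\in\D(\langle A\rangle^{\rho})$, $\psi_R^{(k)}(x)\to\varphi^{(k)}(x)$ pointwise with the uniform bound $|\psi_R^{(k)}(x)|\le C\langle x\rangle^{\rho-k}\le C\langle x\rangle^{\rho}$, so dominated convergence in the spectral measure (using $\int\langle x\rangle^{2\rho}\,d\|E_A(x) f\|^2<\infty$) yields $\psi_R^{(k)}(A)f\to\varphi^{(k)}(A)f$ in $\H$.

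For the second part, assume $\rho<k$. Using \eqref{dei} with $\ell=k+1$ (taking $N\ge k+1$) and $\|(z-A)^{-1-k}\|\le|y|^{-1-k}$, the integrand is dominated on the support of $\tilde{\varphi}_N$ by
\[
\Bigl\|\frac{\partial\tilde{\varphi}_N}{\partial\overline z}(z)(z-A)^{-1-k}\Bigr\| \;\le\; c_{k+1}\,\langle x\rangle^{\rho-k-2}\,|y|^{k+1}\cdot|y|^{-1-k} \;=\; c_{k+1}\,\langle x\rangle^{\rho-k-2},
\]
whose integral over $\{|y|\le\langle x\rangle\}$ gives the majorant $C\int_{\R}\langle x\rangle^{\rho-k-1}\,dx$, finite precisely because $\rho<k$. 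Hence $\int\partial_{\overline z}\tilde{\varphi}_N(z)(z-A)^{-1-k}\,dz\wedge d\overline z$ converges absolutely in operator norm; the same bound combined with dominated convergence shows that the truncated integrals converge to it in norm as $R\to\infty$. Combined with the first formula restricted to the dense subspace $\D(\langle A\rangle^{\rho})$, this identifies the norm-limit with the bounded operator $\varphi^{(k)}(A)$.

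The main obstacle is justifying the complex integration by parts together with the identification of $\partial_z^k(\tilde{\psi_R})_N$ as an almost analytic extension of $\psi_R^{(k)}$: both rely on fixing the explicit Mather-H\"ormander construction of $(\tilde{\psi_R})_N$ rather than working with an abstract almost analytic extension. A secondary technicality is the bookkeeping of the $|y|$-exponent $\ell$ in \eqref{dei}: it must be taken at least $k+1$ in order to offset the $|y|^{-1-k}$ singularity of $\|(z-A)^{-1-k}\|$ near $\R$.
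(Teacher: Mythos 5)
Your proposal is correct and follows the same overall strategy as the paper's argument: start from the Helffer--Sj\"ostrand representation, transfer the $k$ derivatives by iterated integration by parts, and then pass to the limit $R\to\infty$, with the $\rho<k$ norm-convergence statement handled by \eqref{dei} at $\ell=k+1$ and dominated convergence. The middle step, however, is executed a bit differently. The paper starts from the Helffer--Sj\"ostrand formula for $\varphi^{(k)}$, observes that the Mather--H\"ormander extension $\widetilde{\varphi^{(k)}}_N$ and the real derivative $\partial_x^k\tilde{\varphi}_N$ coincide in a strip around $\R$ (their difference carries factors $\theta^{(j)}(y/\langle x\rangle)$, $j\geq 1$, supported where $|y|$ is comparable to $\langle x\rangle$, which a lemma of Davies renders harmless), and then integrates by parts $k$ times in the real variable $x$. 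You instead apply the Helffer--Sj\"ostrand formula to $\psi_R^{(k)}$, integrate by parts in $z$ via Stokes, and identify $\partial_z^k(\tilde{\psi_R})_N$ as an admissible almost analytic extension of $\psi_R^{(k)}$. That identification is true --- almost analyticity forces $\partial_z^j(\tilde{\psi_R})_N|_{\R}=\psi_R^{(j)}$ for $j\leq N$ together with $\partial_{\overline z}\partial_z^k(\tilde{\psi_R})_N=O(|y|^{N-k})$ --- but you assert it without checking it, whereas the paper's exchange of $\widetilde{\varphi^{(k)}}_N$ for $\partial_x^k\tilde{\varphi}_N$ sidesteps the point by a direct computation. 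Your iterated Stokes argument also requires the boundary contributions on $\{y=\pm\epsilon\}$ to vanish as $\epsilon\to 0$, which is precisely where $N\geq k+1$ is needed, since $(z-A)^{-1-k}$ is singular on $\R$; this deserves an explicit sentence rather than being folded into ``compactly supported $1$-form''. For the passage $R\to\infty$ you invoke dominated convergence in the spectral measure of $f$ directly, while the paper Leibniz-expands $(\varphi\theta_R)^{(k)}$ and sends each cross term to zero strongly; your version is slightly more streamlined and equally correct. Net effect: same approach, a cleaner $R\to\infty$ step, and a Stokes-based integration by parts that is sound but leaves two small technical claims implicit.
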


\begin{Lemma} 
\label{didid2}
\cite{GJ2}
Let $s \in [0,1]$ and $\mathbb{D} := \{ (x,y) \in \R^2 : 0 < |y| \leqslant \langle x \rangle \}$. Then there exists $c > 0$ independent of $A$ such that for all $z = x+\i y \in \mathbb{D}$ :
\begin{equation}
\|\langle A \rangle ^s (A-z)^{-1}\| \leqslant c \cdot \langle x \rangle ^s \cdot |y|^{-1}.
\end{equation}
\end{Lemma}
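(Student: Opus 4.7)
The plan is to reduce the operator norm bound to a scalar supremum via the spectral theorem and then split the real axis into two natural regions. Since $A$ is self-adjoint, for any $z = x + \i y$ with $y \neq 0$ one has
\[
\|\langle A \rangle^s (A-z)^{-1}\| = \sup_{\lambda \in \sigma(A)} \frac{\langle \lambda \rangle^s}{|\lambda - z|} \leqslant \sup_{\lambda \in \R} \frac{\langle \lambda \rangle^s}{\sqrt{(\lambda - x)^2 + y^2}}.
\]
So the whole task reduces to showing that this last supremum is bounded by $c\langle x \rangle^s |y|^{-1}$ uniformly in $(x,y) \in \mathbb{D}$; the independence from $A$ is then automatic.

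To control the supremum, I would split the real line into $\{\lambda : |\lambda - x| \leqslant \langle x \rangle\}$ and its complement. On the first region, the triangle inequality gives $\langle \lambda \rangle \leqslant 1 + |x| + \langle x \rangle \leqslant C \langle x \rangle$, so $\langle \lambda \rangle^s \leqslant C^s \langle x \rangle^s$, while $|\lambda - z| \geqslant |y|$, yielding the desired bound directly. On the second region, we have $|\lambda - z| \geqslant |\lambda - x| > \langle x \rangle \geqslant 1$, and again by the triangle inequality $\langle \lambda \rangle \leqslant C|\lambda - x|$; since $s \in [0,1]$ and $s-1 \leqslant 0$, one obtains
\[
\frac{\langle \lambda \rangle^s}{|\lambda - z|} \leqslant C \,|\lambda - x|^{s-1} \leqslant C \langle x \rangle^{s-1}.
\]

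To finish, I would use the defining property of $\mathbb{D}$, namely $|y| \leqslant \langle x \rangle$, to rewrite $\langle x \rangle^{s-1} \leqslant \langle x \rangle^s |y|^{-1}$, which matches the target bound. Combining both regions gives the uniform estimate with a constant depending only on $s$.

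I do not expect any genuine obstacle here; the argument is essentially bookkeeping with the triangle inequality together with the hypothesis $|y| \leqslant \langle x \rangle$ that defines $\mathbb{D}$. The only mildly delicate point is making sure that the bound $\langle \lambda \rangle^s \leqslant C |\lambda - x|^s$ on the far region is used with $s \leqslant 1$, so that the exponent $s - 1$ is non-positive and the decay in $|\lambda - x|$ can be traded for growth in $\langle x \rangle$; this is precisely where the restriction $s \in [0,1]$ enters.
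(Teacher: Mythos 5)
The paper does not actually prove this lemma; it quotes it as a known fact from reference \cite{GJ2}, so there is no in-paper proof to compare against. Your blind proof is correct and self-contained. The reduction to a scalar supremum via the spectral theorem is exactly what one should do for a self-adjoint $A$, and your dichotomy between $\{|\lambda-x|\leqslant \langle x\rangle\}$ and its complement is the natural one. In the first region, $\langle\lambda\rangle\leqslant 1+|\lambda|\leqslant 1+|x|+|\lambda-x|\leqslant 3\langle x\rangle$ and $|\lambda-z|\geqslant|y|$; in the second region, $|\lambda-z|\geqslant|\lambda-x|>\langle x\rangle$ and $\langle\lambda\rangle\leqslant 3|\lambda-x|$ yield $\langle\lambda\rangle^s/|\lambda-z|\leqslant 3^s|\lambda-x|^{s-1}\leqslant 3^s\langle x\rangle^{s-1}$, which you correctly convert into $\langle x\rangle^s|y|^{-1}$ via the defining inequality $|y|\leqslant\langle x\rangle$ of $\mathbb{D}$. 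The hypothesis $s\leqslant 1$ is used exactly where you flag it, to make the exponent $s-1$ non-positive, and $s\geqslant 0$ keeps the first-region estimate trivial. The resulting constant $c=3^s\leqslant 3$ is independent of $A$, $x$ and $y$, as required. One minor stylistic point: you could avoid the auxiliary bound $\langle\lambda\rangle\leqslant 1+|\lambda|$ and work directly with $\langle\lambda\rangle^2=1+\lambda^2$, but this changes nothing of substance.
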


\begin{proposition} 
\label{Prop3}
\cite{GJ1} 
Let $T$ be a bounded self-adjoint operator satisfying $T \in \CC^1(A)$. Then
for any $\varphi \in \mathcal{S}^{\rho}(\R)$ with $\rho < 1$, $T \in \mathcal{C}^1(\varphi(A))$ and
\begin{equation}
\label{use2}
[T,\varphi(A)]_{\circ} = \const \int_{\C} \frac{\partial \tilde{\varphi}_N}{\partial \overline{z}} (z-A)^{-1}[T,A]_{\circ}(z-A)^{-1} \ \dz.
\end{equation}
\end{proposition}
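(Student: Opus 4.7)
The plan is to combine the Helffer--Sj\"ostrand representation \eqref{gross} of $\varphi(A)$ with the standard commutator identity at the level of the resolvent. First I would observe that since $T \in \B(\H) \cap \CC^1(A)$, the commutator $[T,A]_\circ$ is a bounded operator on $\H$, and for any $z \in \C \setminus \R$ one has the identity
\[
[T, (z-A)^{-1}]_\circ = (z-A)^{-1}[T,A]_\circ (z-A)^{-1},
\]
completely analogous to \eqref{CommutatorResolvent}. Formally inserting this into \eqref{gross} yields the candidate expression on the right-hand side of \eqref{use2}, and the work is to verify convergence and that the resulting operator is in fact the bounded extension of the commutator form.

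Second, I would check that the integral in \eqref{use2} converges in operator norm. Using the estimate \eqref{dei} with $\ell = 2$ (choosing $N \geq 2$) together with the elementary bound $\|(z-A)^{-1}\| \leq |y|^{-1}$, the integrand is pointwise dominated by
\[
c_2 \, \|[T,A]_\circ\| \, \langle x \rangle^{\rho - 3} |y|^2 \cdot |y|^{-2} = c_2 \, \|[T,A]_\circ\| \, \langle x \rangle^{\rho - 3}.
\]
Because $\partial_{\bar z}\tilde\varphi_N$ is supported in $\{|y| \leq \langle x\rangle\}$, the $y$-integration produces a factor of order $\langle x\rangle$, leaving $\int_\R \langle x\rangle^{\rho - 2}\,dx$, which is finite precisely when $\rho < 1$. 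Hence the right-hand side of \eqref{use2} defines a bounded operator $B \in \B(\H)$.

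Third, to identify $B$ with the bounded extension of $[T, \varphi(A)]$, I would test on the dense subspace $\D(\langle A\rangle^\rho) \subset \D(\varphi(A))$. For $f, g$ in this subspace, using Lemma \ref{extend rho} applied to $\varphi(A) g$ and to $\varphi(A) f$ (recall $\varphi$ is real-valued so $\varphi(A)$ is self-adjoint), one obtains
\[
\langle f, [T, \varphi(A)] g \rangle = \lim_{R \to \infty} \frac{\i}{2\pi} \int_\C \frac{\partial (\widetilde{\varphi\theta_R})_N}{\partial \bar z}(z) \, \langle f, (z-A)^{-1}[T,A]_\circ (z-A)^{-1} g \rangle \, dz \wedge d\bar z,
\]
where the resolvent commutator identity has been used inside the integral. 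Since $\partial_{\bar z}(\widetilde{\varphi\theta_R})_N$ converges pointwise to $\partial_{\bar z}\tilde\varphi_N$ and satisfies the estimate \eqref{dei} uniformly in $R$, dominated convergence allows passage to the limit and gives $\langle f, B g \rangle$. By density of $\D(\langle A\rangle^\rho)$ and boundedness of $B$, the form $[T, \varphi(A)]$ extends to $B$, and \cite[Lemma 6.2.9]{ABG} then yields $T \in \CC^1(\varphi(A))$ with $[T, \varphi(A)]_\circ = B$, which is exactly \eqref{use2}.

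The main obstacle I anticipate is the clean justification of the uniform (in $R$) dominated-convergence argument when passing from $\varphi \theta_R$ to $\varphi$: one needs that the family $\{\partial_{\bar z}(\widetilde{\varphi\theta_R})_N\}_{R \geq 1}$ obeys the Cauchy--Riemann estimate \eqref{dei} with a constant independent of $R$, so that the bound by $\langle x \rangle^{\rho-3}|y|^0$ derived in the second step serves as an $R$-uniform integrable majorant. This is standard in the Helffer--Sj\"ostrand calculus but requires care in the construction of the almost analytic extension; everything else reduces to the resolvent identity and elementary estimates on the support of $\partial_{\bar z}\tilde\varphi_N$.
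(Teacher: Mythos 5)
Your argument is sound and follows the standard Helffer--Sj\"ostrand route: one inserts the resolvent commutator identity $[T,(z-A)^{-1}]_\circ=(z-A)^{-1}[T,A]_\circ(z-A)^{-1}$ into the representation \eqref{gross}, controls the integrand with \eqref{dei} for $\ell=2$ together with $\|(z-A)^{-1}\|\leqslant|y|^{-1}$ (the factor $|y|^{2}\cdot|y|^{-2}=1$ cancels the resolvent singularity and the $y$-integration over $\{|y|\leqslant\langle x\rangle\}$ leaves $\int_{\R}\langle x\rangle^{\rho-2}\,dx<\infty$ precisely for $\rho<1$), and then identifies the resulting bounded operator with $[T,\varphi(A)]_\circ$ via the $R$-uniform estimates on $\partial_{\bar z}(\widetilde{\varphi\theta_R})_N$ and \cite[Lemma 6.2.9]{ABG}. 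The paper does not prove this statement itself — it is quoted from \cite{GJ1} — but what you reconstruct is exactly the argument used there; the $R$-uniformity you flag as the delicate point does hold, since $\varphi\theta_R$ is bounded in $\mathcal S^\rho(\R)$ uniformly in $R\geqslant 1$ (the derivatives $\theta_R^{(j)}(x)=R^{-j}\theta^{(j)}(x/R)$ are supported where $\langle x\rangle\lesssim R$, so $R^{-j}\lesssim\langle x\rangle^{-j}$), and the almost analytic extension construction transfers these seminorm bounds directly into \eqref{dei}.
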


\end{document}